\newcommand{\xrightarrowdbl}[2][]{%
  \xrightarrow[#1]{#2}\mathrel{\mkern-14mu}\rightarrow
}
\newcommand{\R}{{\ensuremath{\mathbb{R}}}}
\newcommand{\Z}{{\ensuremath{\mathbb{Z}}}}
\newcommand{\C}{{\ensuremath{\mathbb{C}}}}
\newcommand\bC{{\mathbb C}}
\newcommand\bG{{\mathbb G}}
\newcommand\bP{{\mathbb P}}
\newcommand\bR{{\mathbb R}}
\newcommand\bS{{\mathbb S}}
\newcommand\bT{{\mathbb T}}
\newcommand\bZ{{\mathbb Z}}
\newcommand\cC{{\mathcal C}}
\newcommand\cP{{\mathcal P}}
\newcommand\cU{{\mathcal U}}
\newcommand\cX{{\mathcal X}}
\newcommand{\stkout}[1]{\ifmmode\text{\sout{\ensuremath{#1}}}\else\sout{#1}\fi}
\DeclareMathOperator{\GL}{GL}
\DeclareMathOperator{\SL}{SL}
\DeclareMathOperator{\SU}{SU}
\DeclareMathOperator{\D}{D}
\DeclareMathOperator{\G}{G}
\DeclareMathOperator{\U}{U}
\DeclareMathOperator{\spc}{sp}
\DeclareMathOperator{\Arg}{Arg}
\DeclareMathOperator{\End}{End}
\DeclareMathOperator{\diag}{diag}
\DeclareMathOperator{\id}{id}
\DeclareMathOperator{\spn}{span}
\DeclareMathOperator{\Ad}{Ad}
\DeclareMathOperator{\im}{\mathrm{im}}
\newcommand{\ca}[1]{\ensuremath{\mathcal{#1}}}
\newcommand{\norm}[1]{\ensuremath{ {\left\| #1 \right\|} }}
\newtheorem{proposition}{Proposition}[section]
\newtheorem{lemma}[proposition]{Lemma}
\newtheorem{theorem}[proposition]{Theorem}
\newtheorem{theoremrec}{Theorem}
\newtheorem{corollary}[proposition]{Corollary}
\theoremstyle{definition}
\newtheorem{remarks}[proposition]{Remarks}
\newtheorem{definition}[proposition]{Definition}
\newtheorem{example}[proposition]{Example}
\newtheorem{remark}[proposition]{Remark}
\numberwithin{equation}{section}
\newlength{\leftstackrelawd}
\newlength{\leftstackrelbwd}
\def\leftstackrel#1#2{\settowidth{\leftstackrelawd}%
{${{}^{#1}}$}\settowidth{\leftstackrelbwd}{$#2$}%
\addtolength{\leftstackrelawd}{-\leftstackrelbwd}%
\leavevmode\ifthenelse{\lengthtest{\leftstackrelawd>0pt}}%
{\kern-.5\leftstackrelawd}{}\mathrel{\mathop{#2}\limits^{#1}}}
\begin{document}

\title[Continuous spectrum-shrinking maps and applications to preserver problems]{Continuous spectrum-shrinking maps and applications to preserver problems}

\author{Alexandru Chirvasitu, Ilja Gogi\'{c}, Mateo Toma\v{s}evi\'{c}}

\address{A.~Chirvasitu, Department of Mathematics, University at Buffalo, Buffalo, NY 14260-2900, USA}
\email{achirvas@buffalo.edu}

\address{I.~Gogi\'c, Department of Mathematics, Faculty of Science, University of Zagreb, Bijeni\v{c}ka 30, 10000 Zagreb, Croatia}
\email{ilja@math.hr}

\address{M.~Toma\v{s}evi\'c, Department of Mathematics, Faculty of Science, University of Zagreb, Bijeni\v{c}ka 30, 10000 Zagreb, Croatia}
\email{mateo.tomasevic@math.hr}


\keywords{invertible matrices, unitary matrices, spectrum shrinker, spectrum preserver, commutativity preserver, Jordan homomorphisms, normal operator, connected component}

\subjclass[2020]{47A10, 47B15, 47B49, 15A27, 54D05}

\date{\today}

\begin{abstract}
  For a positive integer $n$ let $\mathcal{X}_n$ be either the algebra $M_n$ of $n \times n$ complex matrices, the set $N_n$ of all $n \times n$ normal matrices, or any of the matrix Lie groups $\mathrm{GL}(n)$, $\mathrm{SL}(n)$ and $\mathrm{U}(n)$. We first give a short and elementary argument that for two positive integers $m$ and $n$ there exists a continuous spectrum-shrinking map $\phi : \mathcal{X}_n \to M_m$ (i.e.\ $\mathrm{sp}(\phi(X))\subseteq \mathrm{sp}(X)$ for all $X \in \mathcal{X}_n$) if and only if $n$ divides $m$. Moreover, in that case we have the equality of characteristic polynomials $k_{\phi(X)}(\cdot) = k_{X}(\cdot)^\frac{m}{n}$ for all $X \in \mathcal{X}_n$, which in particular shows that $\phi$ preserves spectra. Using this we show that whenever $n \geq 3$, any continuous commutativity preserving and spectrum-shrinking map $\phi : \mathcal{X}_n \to M_n$ is of the form $\phi(\cdot)=T(\cdot)T^{-1}$ or $\phi(\cdot)=T(\cdot)^tT^{-1}$, for some $T\in \mathrm{GL}(n)$. The analogous results fail for the special unitary group $\mathrm{SU}(n)$ but hold for the spaces of semisimple elements in either $\mathrm{GL}(n)$ or $\mathrm{SL}(n)$. As a consequence, we also recover (a strengthened version of) \v{S}emrl's influential characterization of Jordan automorphisms of $M_n$ via preserving properties.
\end{abstract}

\maketitle


\section*{Introduction}

The present paper fits into the body of literature revolving around classifying maps between various matrix spaces which preserve spectra or various other invariants/properties. A few recollections will help appreciate the general flavor of the topic. 

Denote by $M_n$, $\GL(n)$ and $\U(n)$ the algebra of $n \times n$ complex matrices and the subgroups of invertible and unitary matrices therein respectively. It is well-known that any nonzero \emph{Jordan endomorphism} $\phi$ of $M_n$ (i.e.\ \cite[\S I.1]{jac_jord} a linear map $\phi:M_n\to M_n$ with $\phi(XY+YX) = \phi(X)\phi(Y) + \phi(Y)\phi(X)$ for all $X,Y \in M_n$) is  either a conjugation by an invertible matrix, or a conjugation composed with transposition:
\begin{equation}\label{eq:inner}
  \phi(\cdot) = T(\cdot)T^{-1} \qquad \text{or}   \qquad \phi(\cdot) = T(\cdot)^{t}T^{-1},
\end{equation}
for some $T \in \GL(n)$ (see e.g.\ \cite{Herstein,Semrl2}). There are numerous results in the ever-growing literature characterizing Jordan morphisms between matrix and more general (operator) algebras via suitable (linear on non-linear) preserving properties. One of the most prominent nonlinear preserver characterizations of Jordan automorphisms of $M_n$ is \v{S}emrl's \cite[Theorem~1.1]{Semrl}:

\begin{theoremrec}[\v{S}emrl]\label{thm:Semrl}
  Let $\phi : M_n \to M_n, n \ge 3$, be a continuous commutativity and spectrum preserving map. Then there exists $T \in \GL(n)$ such that $\phi$ is of the form \eqref{eq:inner}.
\end{theoremrec}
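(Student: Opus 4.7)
The plan proceeds in three steps, leveraging the paper's earlier spectrum-shrinking theorem to immediately dispose of the spectrum hypothesis before exploiting commutativity preservation.

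\textbf{Step 1 (Characteristic polynomial preservation).} Apply the spectrum-shrinking theorem with $\mathcal{X}_n = M_n$ and $m = n$: spectrum preservation certainly implies spectrum shrinking, so we obtain $k_{\phi(X)} = k_X$ for every $X \in M_n$. In particular, whenever $X$ has simple spectrum (which happens on an open dense subset of $M_n$), $\phi(X)$ has the same simple spectrum, and since the minimal polynomial of $\phi(X)$ divides $k_X$ and $k_X$ has only simple roots, $\phi(X)$ is diagonalizable and similar to $X$.

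\textbf{Step 2 (Normalization on the diagonal torus).} Let $\mathcal{D} \subset M_n$ be the diagonal subalgebra and fix $D_0 \in \mathcal{D}$ with simple spectrum. Since $\mathcal{D} = \mathcal{Z}(D_0)$, commutativity preservation gives $\phi(\mathcal{D}) \subseteq \mathcal{Z}(\phi(D_0))$, and Step 1 makes the latter a maximal commutative subalgebra of $M_n$, conjugate to $\mathcal{D}$ by some $T_0 \in \GL(n)$. Replacing $\phi$ by $T_0^{-1} \phi(\cdot) T_0$ (which is still continuous and preserves spectra and commutativity) we may assume $\phi(\mathcal{D}) \subseteq \mathcal{D}$. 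Characteristic polynomial preservation plus continuity on the connected open set of diagonal matrices with distinct entries then produces a single permutation $\sigma \in S_n$ acting on the entries, and a further conjugation by the permutation matrix realizing $\sigma^{-1}$ arranges $\phi|_\mathcal{D} = \mathrm{id}_\mathcal{D}$; in particular every standard rank-one idempotent $E_{ii}$ is fixed.

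\textbf{Step 3 (Extension and coherence).} For each pair $i \neq j$, commutativity of $\phi(X)$ with each fixed $E_{kk}$ ($k \neq i, j$), combined with characteristic polynomial preservation, restricts $\phi$ on the subalgebra $M(\{i,j\}) := \mathrm{span}\{E_{ab} : a, b \in \{i,j\}\} \cong M_2$ to a continuous self-map fixing the two diagonal idempotents and preserving characteristic polynomials and commutativity. A direct analysis of this $M_2$-problem (using, e.g., the Riemann-sphere parametrization of rank-one trace-one projections in $M_2$) shows that on each block $\phi$ acts either as the identity or as transposition. The main technical obstacle is the coherence step: forcing a single global identity-or-transposition choice that is valid across all pairs $(i,j)$ simultaneously. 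This is where the hypothesis $n \geq 3$ is essential: for any triple $\{i, j, k\}$, commutativity relations among matrices supported on $(i,j)$ and $(j,k)$ inside the ambient $M_n$ force the local choices on these two blocks to agree. Once synchronized across all pairs, one obtains $\phi = \mathrm{id}$ or $\phi = (\cdot)^t$ on $M_n$, which, combined with the conjugations from Step 2, yields the claimed form.
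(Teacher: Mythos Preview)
Your Steps 1 and 2 are fine and essentially match the paper's Proposition~2.2 (applied to the diagonal torus) together with Corollary~1.2. The difficulty is Step~3, which contains two genuine gaps.

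\textbf{The $M_2$-block claim is not justified.} You assert that the restriction of $\phi$ to each $M(\{i,j\})\cong M_2$ must be the identity or the transpose, citing a ``direct analysis'' via the Riemann-sphere parametrization of rank-one idempotents. But recall that the theorem is \emph{false} for $n=2$: there exist continuous commutativity- and spectrum-preserving self-maps of $M_2$ that are not of the form \eqref{eq:inner}. Commutativity preservation is nearly vacuous in $M_2$ (the centralizer of any non-scalar matrix is just $\mathrm{span}\{I,X\}$), so it is not at all clear that the extra constraint of fixing the two diagonal idempotents suffices. This step needs an actual argument, not an appeal to a direct analysis.

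\textbf{The blocks do not cover $M_n$.} Even granting the $M_2$ claim and the coherence across pairs, you have only determined $\phi$ on $\bigcup_{i\ne j} M(\{i,j\})$. Since $\phi$ is not assumed linear, this does not determine $\phi$ globally, and the union is nowhere near dense in $M_n$ for $n\ge 3$: any matrix with nonzero entries in positions $(i,j)$ and $(j,k)$ for three distinct indices (say a single Jordan block) lies in no $M(\{i,j\})$. Your final sentence ``one obtains $\phi=\id$ or $\phi=(\cdot)^t$ on $M_n$'' is therefore a non sequitur.

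\textbf{How the paper avoids this.} The paper recovers Theorem~\ref{thm:Semrl} by restricting $\phi$ to the \emph{dense} subset $\GL(n)_{ss}\subset M_n$ and applying Theorem~\ref{th:ss}. The point is that every semisimple invertible matrix lies in some conjugate maximal torus $S\,\D(n)\,S^{-1}$, and the paper analyzes $\phi$ on \emph{all} such tori simultaneously: Lemma~\ref{le:def.latt.mor} packages this into a single map on the full Grassmannian, and the fundamental theorem of projective geometry (Proposition~\ref{pr:pres.sup}) then forces a global (conjugate-)linear $T$. Your approach fixes one torus and tries to propagate outward along $2\times 2$ blocks; the paper's approach is intrinsically global from the start, which is exactly what circumvents the non-linearity obstacle.
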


Recall that a map $\phi: M_n \to M_n$ preserves
\begin{itemize}[wide]
\item commutativity if
  \begin{equation*}
    \forall X,Y\in M_n
    \quad:\quad
    [X,Y]:=XY-YX=0
    \xRightarrow{\quad}
    [\phi(X),\phi(Y)]=0;
  \end{equation*}

\item and spectra if $\spc(\phi(X))=\spc(X)$ for all $X \in M_n$. 
\end{itemize}
The first version of this result was in fact formulated by Petek and \v Semrl in \cite{PetekSemrl}, with an additional assumption that $\phi$ preserves commutativity in both directions or rank-one matrices. For other related results and further generalizations we refer to \cite{GogicPetekTomasevic,GOGIC2025129497,Petek-HM,Petek-TM} and the references within.


\smallskip

We now turn to the main objective of the paper. First recall that if $\mathcal{A}$ and $\mathcal{B}$ are unital (Banach) algebras, then a map $\phi: \mathcal{A} \to \mathcal{B}$ is said to be \emph{spectrum-shrinking} if $\spc(\phi(x))\subseteq \spc(x)$ for all $x \in \mathcal{A}$. Motivated by Theorem \ref{thm:Semrl}, the famous Kaplansky-Aupetit question (asking whether Jordan epimorphisms between unital semisimple Banach algebras can be characterized as linear spectrum-shrinking surjective maps, see e.g.\ \cite{Aupetit,BresarSemrl,Kaplansky}) and the results of \cite{LiTsaiWangWong}, the question arises of whether an arbitrary continuous spectrum-shrinking map $\phi : M_n \to M_m$ (if such exists) automatically preserves spectra.

The issue of spectrum shrinking versus spectrum preservation becomes even more salient given its connection to Kaplansky's problem: finding sufficient conditions that will ensure an invertibility-preserving map (between complex unital algebras, in our context) is a Jordan morphism. Linear unital maps $\phi$ between unital Banach algebras
\begin{itemize}[wide]
\item preserve \emph{and reflect} invertibility, i.e.
  \begin{equation*}
    \phi(x)\text{ invertible }
    \xLeftrightarrow{\quad}
    x\text{ invertible},
  \end{equation*}
  precisely when they preserve spectra;
  
\item and preserve invertibility exactly when they \emph{shrink} spectra instead.
\end{itemize}
Any characterization / classification result involving spectrum preservation will have a potential spectrum-shrinking counterpart, but the literature seems to suggest such improvements are not always routine: 

\begin{itemize}[wide]
\item \cite[Theorem 1.1]{MR1866032} and \cite[Theorem 2]{MR832991} characterize linear surjections between bounded-endomorphism algebras of Banach spaces which both preserve and reflect. The former, in particular, gives a very short proof of the main result;

\item By contrast, a partial analogue \cite[Theorem 1.1]{MR1311919} requiring only invertibility preservation is substantially more involved.

\item Similarly non-trivial are the techniques involved in characterizing invertibility-preserving linear maps between matrix algebras in \cite[Theorem 2]{zbMATH05719630} or \cite[Theorem 4.1]{MR2736150}, say. 
\end{itemize}

Returning to the question of the extent to which spectrum shrinking entails automatic spectrum preservation, it turns out this is indeed the case even upon restricting the domain of $\phi$ to $\GL(n)$, $\U(n)$, or any number of other topological matrix spaces of interest. Corollary \ref{cor:sp.shrk.conn.conf.sp} below provides a sample of possibilities for what those matrix spaces can be:

\begin{theorem}\label{thintro:many.spcs}
  Let $m,n\in \bZ_{\ge 1}$ and take for $\cX_n\le M_n$ any one of the following spaces:
  \begin{itemize}[wide]
  \item $\cX_n=M_n$ or the subspace of diagonalizable matrices;
  \item $\cX_n=\GL(n)$ or the subspace of diagonalizable invertible matrices;
  \item $\cX_n=\SL(n)$ or the subspace of diagonalizable determinant-1 matrices;
  \item $\cX_n=\U(n)$;
  \item $\cX_n=N_n$ -- the subspace of normal $n\times n$ matrices. 
  \end{itemize}
  Then there exists a continuous spectrum shrinker
  \begin{equation*}
    \phi : \cX_n \to M_m
  \end{equation*}
  if and only $n$ divides $m$, in which case we have the equality
  \begin{equation*}
    k_{\phi(X)}
    =
    (k_{X})^{\frac mn}
    ,\quad
    \forall  X\in \cX_n,
  \end{equation*}
  where $k_{\bullet}$ denotes the characteristic polynomial of $\bullet$. 
\end{theorem}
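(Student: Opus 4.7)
The plan is to reduce to the dense open ``regular'' locus $\cX_n^{\mathrm{reg}}\subseteq\cX_n$ of elements with $n$ pairwise distinct eigenvalues, and then run a connectedness argument on its tautological eigenvalue cover.

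Sufficiency of $n\mid m$ is immediate: the block-diagonal assignment $\phi(X):=X^{\oplus m/n}$ is continuous, spectrum-preserving, and satisfies $k_{\phi(X)}=k_X^{m/n}$. For the converse, fix a continuous spectrum shrinker $\phi:\cX_n\to M_m$. In each of the five listed cases $\cX_n^{\mathrm{reg}}$ is open and dense in $\cX_n$ by standard perturbation arguments. For $X\in\cX_n^{\mathrm{reg}}$, enumerating $\spc(X)=\{\lambda_1,\dots,\lambda_n\}$, the spectrum-shrinking hypothesis forces
\[
  k_{\phi(X)}(t)=\prod_{i=1}^n (t-\lambda_i)^{a_i(X)},\qquad a_i(X)\in\Z_{\ge 0},\ \sum_{i=1}^n a_i(X)=m.
\]
Now consider the tautological eigenvalue cover
\[
  Y:=\{(X,\lambda)\,:\,X\in\cX_n^{\mathrm{reg}},\ \lambda\in\spc(X)\}\ \longrightarrow\ \cX_n^{\mathrm{reg}},\qquad (X,\lambda)\mapsto X,
\]
an unbranched $n$-sheeted covering. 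On $Y$ the function $(X,\lambda)\mapsto a_\lambda(X)$ is integer-valued and continuous (since $k_{\phi(X)}$ varies continuously with $X$ and locally $\lambda$ is a continuously varying simple root of $k_X$), hence locally constant; provided $Y$ is connected, this forces $a_\lambda(X)\equiv m/n$, whence $n\mid m$ and $k_{\phi(X)}=k_X^{m/n}$ on $\cX_n^{\mathrm{reg}}$.

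The main obstacle is thus connectedness of $Y$, equivalently transitivity of the monodromy action of $\pi_1(\cX_n^{\mathrm{reg}})$ on eigenvalue labels. The spectral map $\mathrm{spec}:\cX_n^{\mathrm{reg}}\to\mathrm{UConf}_n(S)$ (with $S=\C$, $\C\setminus\{0\}$, $S^1$, or $\C$, and for $\SL(n)$ the hypersurface $\{\prod\lambda_i=1\}\subseteq(\C\setminus\{0\})^n/S_n$) is a locally trivial fibration with connected fibers of the form $G/T$, so the question reduces to transitivity in $\pi_1(\mathrm{UConf}_n(S))$. For $S\subseteq\C$ this is the classical surjection $B_n\twoheadrightarrow S_n$; for $S=S^1$, the annular braid group $B_n(S^1)$ also surjects onto $S_n$ (visible already in the $n=2$ case from the generator of $\pi_1(\mathrm{UConf}_2(S^1))\cong\Z$, i.e.\ the core circle of the M\"obius band, swapping the two labels; for general $n$, rotations supply $n$-cycles and local half-twists supply adjacent transpositions). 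Once $Y$ is known to be connected, continuity of $X\mapsto k_X$ together with density of $\cX_n^{\mathrm{reg}}$ extends the identity $k_{\phi(X)}=k_X^{m/n}$ from the regular locus to all of $\cX_n$.
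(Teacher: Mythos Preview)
Your argument is correct and arrives at the same conclusion, but the packaging differs from the paper's in an instructive way.

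The paper proves a general statement (its Theorem~1.1) parametrized by a triple $(\G,L,V)$: a connected group $\G\le\GL(n)$, a diagonal locus $L\subseteq\C^n$, and a strictly upper-triangular piece $V$. It works directly with the \emph{ordered} eigenvalue space $L\setminus\Delta_L$ rather than your cover $Y$. The crux is an elementary ``finite-valued continuous function on a connected space is constant'' lemma (the paper's Lemma~1.3): on each connected component $L'_0$ of $L\setminus\Delta_L$ the exponent tuple $(k_{1,L'_0},\dots,k_{n,L'_0})$ is constant, and then invariance of $k_{\phi(\cdot)}$ under $\G$-conjugation forces all $k_{j,L'_0}$ to coincide whenever the isotropy group of $L'_0$ in $S_n\cap\G$ is transitive on $\{1,\dots,n\}$. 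For $M_n$, $\GL(n)$, $\SL(n)$, $N_n$ the relevant $L\setminus\Delta_L$ is connected (complement of a closed algebraic set in an irreducible complex variety), so the isotropy is all of $S_n$ and there is nothing further to check. For $\U(n)$ the space $(\bS^1)^n\setminus\Delta=\cC^n(\bS^1)$ has $(n-1)!$ components; the paper's Lemma~1.4 identifies $\pi_0(\cC^n(\bS^1))$ with $S_n/\langle(1\,2\,\cdots\,n)\rangle$, so each isotropy group is a conjugate of the cyclic group $\langle(1\,2\,\cdots\,n)\rangle$, which is still transitive.

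Your route via the eigenvalue cover $Y\to\cX_n^{\mathrm{reg}}$ and monodromy in $\pi_1(\mathrm{UConf}_n(S))$ is essentially the dual picture: the paper's $L\setminus\Delta_L$ is the ordered configuration space, and connectedness of $Y$ is equivalent to the transitivity the paper verifies by hand. What the paper buys is self-containment: no covering-space language, no fibration long exact sequence, no imported braid-group surjection onto $S_n$, and the $\U(n)$ case is handled by an explicit three-line analysis of components rather than an appeal to annular braid groups. What your approach buys is conceptual clarity and portability: once the fibration $\cX_n^{\mathrm{reg}}\to\mathrm{UConf}_n(S)$ with connected $G/T$ fibers is set up, the question becomes a standard one about braid groups, and the argument would transfer verbatim to any $S$ for which $B_n(S)\twoheadrightarrow S_n$ is known.
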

By Remarks \ref{rem:Theorem main} the analogous result fails for the subspace $H_n$ of $n\times n$ self-adjoint matrices and the special unitary group $\SU(n)$, $n \geq 2$, as both spaces admit a continuous eigenvalue selection.

\smallskip 

Section \ref{se:cls} focuses instead on results classifying continuous spectrum-shrinking maps defined on various matrix spaces. A paraphrased aggregate of Theorems \ref{th:main-result-gln} and \ref{th:ss} reads as follows.

\begin{theorem}\label{thintro:clsf}
  Let $n\in \bZ_{\ge 3}$ and $\phi : \cX_n\to M_n$ a continuous, commutativity-preserving and spectrum-shrinking map for a subset $\cX_n\subseteq M_n$.

  If $\ca{X}_n\in \{\GL(n), \SL(n), \U(n), N_n\}$ or consists of the diagonalizable matrices in either $\GL(n)$ or $\SL(n)$ then $\phi$ is either a conjugation or a conjugation composed with transposition.
\end{theorem}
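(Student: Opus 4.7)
The plan starts by invoking Theorem \ref{thintro:many.spcs} with $m=n$: for each of the listed $\cX_n$, the continuous spectrum-shrinker $\phi$ automatically satisfies the equality $k_{\phi(X)} = k_X$ and hence preserves spectra. From here the overall task is to match the hypotheses of \v{S}emrl's Theorem \ref{thm:Semrl}, either by extending $\phi$ continuously to a commutativity- and spectrum-preserving map $M_n \to M_n$, or by intrinsically adapting the argument on the restricted domain $\cX_n$.

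For $\cX_n = \GL(n)$, which is open and dense in $M_n$, the natural strategy is continuous extension. Given $X \in M_n$ one approaches it by $X_k := X + \varepsilon_k I \in \GL(n)$ for a generic sequence $\varepsilon_k \to 0$ avoiding $-\spc(X)$. The $X_k$ pairwise commute, so commutativity preservation and continuity force the images $\phi(X_k)$ to commute pairwise as well; all share characteristic polynomial $k_{X_k} \to k_X$, and commutativity of subsequential limits with the already-defined $\phi$ on $\GL(n)$ pins them down, yielding a continuous extension $\widetilde\phi : M_n \to M_n$ still preserving spectra and commutativity, to which Theorem \ref{thm:Semrl} applies. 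The case $\cX_n = \SL(n)$ reduces to $\GL(n)$ via the scaling $X \mapsto (\det X)^{-1/n}X$, with the $n$-th root ambiguity controlled by continuity; the diagonalizable-subset cases follow from density of diagonalizable matrices in the ambient group combined with continuity of $\phi$.

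For $\cX_n \in \{\U(n), N_n\}$ neither subspace is dense in $M_n$, so an ambient extension is unavailable and one argues intrinsically via the spectral structure of normal matrices. For a normal $X$ with distinct eigenvalues, the commutant of $X$ inside $N_n$ is the real-linear span of its spectral projections; commutativity preservation sends this commutant into that of $\phi(X)$, which by spectrum preservation has the same dimension. Restricting $\phi$ to a maximal commutative $*$-subalgebra of $N_n$ (the diagonal normals after a unitary change of basis) then forces $\phi$, after a single conjugation by some $T \in \GL(n)$, to act as a permutation of eigenvalues; continuity together with connectedness of $\U(n)$ (resp.\ $N_n$) freezes that permutation to either the identity or the reversal corresponding to transposition. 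Varying the maximal torus through all $\U(n)$-conjugates and using commutativity preservation to enforce agreement across overlaps globalizes this local model to a single $T$ with $\phi(\cdot) = T(\cdot)T^{-1}$ or $\phi(\cdot) = T(\cdot)^t T^{-1}$ on all of $\cX_n$. The principal obstacle throughout is this globalization step — stitching local torus-wise data into one $T$ compatible with both continuity and commutativity preservation, and distinguishing the conjugation from the transposition-conjugation case along connected paths — the very issue at the heart of \v{S}emrl's original argument.
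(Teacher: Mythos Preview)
There are several genuine gaps, and one of them concerns what turns out to be the hardest case.

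\textbf{The semisimple cases are dismissed with a backwards argument.} You write that $\GL(n)_{ss}$ and $\SL(n)_{ss}$ ``follow from density of diagonalizable matrices in the ambient group combined with continuity of $\phi$.'' But $\phi$ is only \emph{defined} on the diagonalizable matrices; density lets you deduce the semisimple case from the full-group case only if you already know $\phi$ extends continuously to the full group, which is precisely what is at issue. The paper's logical flow is the opposite: it proves the $\GL(n)_{ss}$ case independently (without \v{S}emrl's theorem) and then recovers $\GL(n)$ by density. That independent argument is the most delicate part of the paper: a discontinuous candidate $\Theta:SNS^{-1}\mapsto S^{-1}NS$ (for positive $S$ and normal $N$) survives all of the algebraic constraints --- its well-definedness and commutativity preservation rest on the Putnam--Fuglede theorem --- and must be ruled out separately by a functional-calculus discontinuity argument. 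Your sketch does not touch this.

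\textbf{The $\GL(n)$ extension step is incomplete.} You assert that the sequence $\phi(X+\varepsilon_k I)$ converges, but pairwise commutativity together with $k_{\phi(X_k)}\to k_X$ does not force convergence (for instance, similar diagonalizable matrices with the same characteristic polynomial need not be close). The paper avoids any limiting extension: it defines $\psi(X):=\phi\big(X+(1+\rho(X))I\big)-(1+\rho(X))I$ on all of $M_n$, applies \v{S}emrl's theorem to $\psi$, and then uses the torus result (Proposition~\ref{pr:alg.cpct.tor}) to show $\phi$ intertwines polynomial functional calculus, forcing $\phi=\psi$ on $\GL(n)$.

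\textbf{The $\U(n)$ mechanism is misidentified.} Transposition fixes spectra, so the dichotomy is not ``identity versus eigenvalue reversal'' of a permutation on a torus. In the paper, the restriction of $\phi$ to every maximal torus is shown to be a single conjugation (no transpose, no nontrivial permutation); the two alternatives in \eqref{eq:inner} emerge only later, from the Fundamental Theorem of Projective Geometry applied to the induced self-map of the Grassmannian, where the semilinear operator it produces is either linear or conjugate-linear. Your acknowledged ``principal obstacle'' --- globalizing the torus-wise conjugations --- is exactly what that projective-geometry step resolves, and your sketch supplies no substitute for it.
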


Our proof of the semisimple branch of Theorem \ref{thintro:clsf} does not rely on \v{S}emrl's theorem \ref{thm:Semrl}. In fact, a simple continuity argument recovers Theorem \ref{thm:Semrl} via Theorems \ref{thintro:many.spcs} and \ref{thintro:clsf}.

The techniques employed in proving Theorems \ref{thintro:many.spcs} and \ref{thintro:clsf} involve very little direct computation, being in part algebraic, in part reliant on the topology of the various matrix spaces involved, and in part operator-theoretic: in analyzing a qualitatively new candidate
\begin{equation}\label{eqintro:sns}
  SNS^{-1}
  \xmapsto{\quad}
  S^{-1}NS
  ,\quad
  \forall \text{ normal }N\in N_n\text{ and positive }S\in \GL(n).
\end{equation}
(to show that it in fact meets all of the requirements save for continuity) we make crucial use of the celebrated \emph{Putnam-Fuglede theorem} \cite[Theorem]{zbMATH03133061}.


There is some hope of extending the main results, and several directions present themselves for doing so: other compact matrix Lie groups in place of $\U(n)$, various infinite-dimensional versions of the results, etc. In part, this forms the object of future work based on adjacent techniques.

\subsection*{Acknowledgments}

We are grateful for P. \v{S}emrl's many insightful and illuminating comments, tips and pointers to the relevant literature on Kaplansky-type problems.  

\section{Leveraging spectrum shrinking into spectrum preservation}\label{se:autopres}

To spell out a convenient common generalization, we need some notation.

First of all, by $\bC^{\times}$ we denote the subgroup of nonzero complex numbers. Now let $n \in \bZ_{\ge 1}$. As usual, we identify vectors in $\C^n$ with the corresponding column-matrices. Similarly, depending on context, we also identify vectors in $\C^n$ with corresponding diagonal matrices and vice versa. For a matrix $X \in M_n$ we denote by $k_X(x) := \det(x I_n-X)$ its characteristic polynomial ($I_n$ is the identity matrix in $M_n$). We naturally identify the symmetric group $S_n$ with the $n \times n$ permutation matrices, so that $S_n \le \GL(n)$. The convention pertaining to permutations is that $\sigma\in S_n\le \GL(n)$ operates by
\begin{equation*}
  e_j
  \xmapsto{\quad\sigma\quad}
  e_{\sigma(j)}
  ,\quad
  (e_1, \ldots, e_n)\text{ the standard basis of }\bC^n
\end{equation*}
or, equivalently, the left action on (column) size-$n$ vectors is 
\begin{equation}\label{eq:sn.act}
  (x_1, \ldots, x_n)
  \xmapsto{\quad\sigma\quad}
  (x_{\sigma^{-1}(1)}, \ldots , x_{\sigma^{-1}(n)}).
\end{equation}
The symbol $\Ad_{\G}$ denotes the conjugation (or \emph{adjoint}) action of a subgroup $\G\le \GL(n)$ on $M_n$, so that for $S \subseteq M_n$,
\begin{equation*}
  \Ad_{\G}S=\{gxg^{-1} \ : \ x \in S,\ g \in \G\}.
\end{equation*}

Given a subset $L\subseteq \C^n$, by $\Delta_{L}$ we denote the subset of $L$ that consists of elements with at least two equal coordinates:
\begin{equation*}
  \Delta_{L}
  :=
  \left\{(x_1, \ldots, x_n)\in L \ : \ x_j=x_k\text{ for some }j\ne k\right\}.
\end{equation*}
If $V$ is a subspace of the algebra $T^{+}(n)$ of $n \times n$ strictly upper-triangular matrices, denote by $T_{L,V} = T_{L,V}(n)$ the space of upper-triangular matrices with diagonal in $L$ and strictly upper-triangular component in $V$:

\begin{equation*}
  T_{L,V}:=\{\diag(\lambda_1, \ldots, \lambda_n) + v \ : \ (\lambda_1, \ldots, \lambda_n) \in L, \ v \in T^{+}(n)\}.
\end{equation*}

In the statement of Theorem \ref{th:sp.shrk.conn.conf.sp} below we refer to the conjugation action of the group $S_n\le \GL(n)$ of permutation matrices on the set $L$ of tuples, identified with that of diagonal matrices. $S_n$ will then also act on $\Delta_L$, on the complement $L\setminus \Delta_L$ and  hence, finally, also on the space of connected components of $L\setminus \Delta_L$. 

\begin{theorem}\label{th:sp.shrk.conn.conf.sp}
  Let $n\in \bZ_{\ge 1}$, $\G\le \GL(n)$ a closed connected subgroup, $V\le T^{+}(n)$ a linear subspace and $L\subseteq \bC^n$ a subset such that
  \begin{itemize}[wide]
  \item $L\setminus \Delta_L$ is dense in $L$;
  \item and the isotropy groups of the connected components of $L\setminus \Delta_L$ in $\G\cap S_n$ are transitive on $\{1,2,\ldots,n\}$.
  \end{itemize}
  Then for an arbitrary $m\in \bZ_{\ge 1}$ there exists a continuous spectrum shrinker
    \begin{equation*}
  \phi :   \Ad_{\G}T_{L,V} \to M_m
  \end{equation*}
  if and only $n$ divides $m$ and in that case we have the equality of characteristic polynomials
  \begin{equation}\label{eq:k.is.pow}
    k_{\phi(X)}
    =
    (k_{X})^{\frac mn}
    ,\quad
   \forall  X\in \Ad_{\G}T_{L,V}.
  \end{equation}
\end{theorem}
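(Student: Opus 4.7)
The plan is to dispatch the ``if'' direction by an explicit construction and the ``only if'' direction by a continuity-and-connectedness argument exploiting the permutation-transitivity hypothesis. If $n\mid m$, then $\phi(X):=X^{\oplus m/n}$ (block-diagonal with $m/n$ copies of $X$) is a continuous spectrum-preserver $\Ad_{\G}T_{L,V}\to M_m$, in particular spectrum-shrinking, and satisfies $k_{\phi(X)}=(k_X)^{m/n}$.

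For the converse, assume a continuous spectrum-shrinker $\phi:\Ad_{\G}T_{L,V}\to M_m$ is given. For $X\in T_{L,V}$ with diagonal $\lambda\in L\setminus\Delta_L$ (and hence pairwise distinct $\lambda_j$'s), spectrum shrinking forces $k_{\phi(X)}(x)=\prod_{j=1}^n(x-\lambda_j)^{m_j(X)}$ with non-negative integers $m_j(X)$ summing to $m$. Since the coefficients of $k_{\phi(X)}$ depend continuously on $X$ and the $\lambda_j$ remain distinct, a standard argument-principle count shows the $m_j(X)$ are locally constant (integer-valued). On each connected component $C$ of $L\setminus\Delta_L$, the set $T_{C,V}$ is homeomorphic to $C\times V$ and therefore connected, so the $m_j$ take common values $m_1^C,\dots,m_n^C$ there.

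The crux---and the step I expect to pose the main conceptual obstacle---is to show the $m_j^C$ all agree, by combining the connectedness of $\G$ with the permutation-transitivity hypothesis. Pick any $\sigma\in\G\cap S_n$ stabilizing $C$ and consider $X^0:=\diag(\lambda)$ and $X^1:=\sigma X^0\sigma^{-1}=\diag(\sigma\lambda)$; both lie in $T_{C,V}$ since $\sigma\lambda\in\sigma C=C$. A path from $I_n$ to $\sigma$ in $\G$ connects $X^0$ to $X^1$ through matrices of constant (distinct) spectrum $\{\lambda_1,\dots,\lambda_n\}$, and rerunning the local-constancy argument along this path produces $k_{\phi(X^0)}=k_{\phi(X^1)}$. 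Comparing coefficients in
\[\prod_j(x-\lambda_j)^{m_j^C}=\prod_j(x-\lambda_{\sigma^{-1}(j)})^{m_j^C}=\prod_k(x-\lambda_k)^{m_{\sigma(k)}^C}\]
yields $m_k^C=m_{\sigma(k)}^C$ for all $k$. The transitivity hypothesis then forces all $m_j^C$ to be equal; summing to $m$ gives $n\mid m$ and $m_j^C=m/n$.

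Two routine extensions finish the argument. First, for $X=gYg^{-1}\in\Ad_{\G}T_{L\setminus\Delta_L,V}$ with $Y\in T_{L\setminus\Delta_L,V}$ and $g\in\G$, a path in $\G$ from $I_n$ to $g$ keeps the distinct spectrum of $Y$ fixed, so yet another local-constancy argument delivers $k_{\phi(X)}=k_{\phi(Y)}=(k_Y)^{m/n}=(k_X)^{m/n}$. Second, since $L\setminus\Delta_L$ is dense in $L$, the set $\Ad_{\G}T_{L\setminus\Delta_L,V}$ is dense in $\Ad_{\G}T_{L,V}$, and both sides of $k_{\phi(X)}=(k_X)^{m/n}$ depend continuously on $X$, so the identity extends to the whole orbit.
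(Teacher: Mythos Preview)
Your proof is correct and follows essentially the same route as the paper: reduce to the dense set of simple-spectrum matrices, use continuity and connectedness to pin down the multiplicity exponents on each component of $L\setminus\Delta_L$, and then invoke the permutation-transitivity hypothesis to force all exponents equal. The only cosmetic difference is that the paper packages the ``multiplicities are locally constant'' step into a clean discrete-value lemma (a continuous function into a Hausdorff space taking values in a finite set $\{f_1(x),\dots,f_n(x)\}$ with the $f_j(x)$ pairwise distinct must coincide with one of the $f_j$), whereas you phrase it via the argument principle; and the paper handles the $\G$- and $V$-invariance of $k_{\phi(\cdot)}$ in one stroke rather than via separate path arguments.
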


The proof of Theorem \ref{th:sp.shrk.conn.conf.sp} is (perhaps surprisingly) both elementary and short; we provide it below, after first recording a number of specializations to matrix spaces that are naturally of interest. For a subspace $M\le M_n$ we write $M_{ss}\subseteq M$ for the subspace of \emph{semisimple} (i.e.\ diagonalizable) matrices therein.

\begin{corollary}\label{cor:sp.shrk.conn.conf.sp}
  The conclusion of Theorem \ref{th:sp.shrk.conn.conf.sp} holds for continuous spectrum-shrinking maps $\phi : \cX_n\to M_m$ in any of the following cases.
  \begin{enumerate}[(a)]
  \item\label{item:cor:sp.shrk.conn.conf.sp:mn} $\cX_n=M_n$ or $(M_{n})_{ss}$; 
  \item\label{item:cor:sp.shrk.conn.conf.sp:gl} $\cX_n=\GL(n)$ or $\GL(n)_{ss}$;
  \item\label{item:cor:sp.shrk.conn.conf.sp:sl} $\cX_n=\SL(n)$ or $\SL(n)_{ss}$;
  \item\label{item:cor:sp.shrk.conn.conf.sp:u} $\cX_n=\U(n)$;
  \item\label{item:cor:sp.shrk.conn.conf.sp:norm} $\cX_n=N_n$. 
  \end{enumerate}
\end{corollary}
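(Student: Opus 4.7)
The plan is to deduce each case by direct application of Theorem \ref{th:sp.shrk.conn.conf.sp}: identify each $\cX_n$ as an orbit $\Ad_{\G} T_{L,V}$ for a suitable closed connected subgroup $\G \le \GL(n)$, a tuple set $L \subseteq \bC^n$, and a subspace $V \le T^+(n)$, and then verify the two hypotheses (density of $L\setminus \Delta_L$ in $L$, and transitivity on $\{1, \ldots, n\}$ of the isotropy groups in $\G\cap S_n$ of the connected components of $L \setminus \Delta_L$).

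The presentations I would use are as follows. For \eqref{item:cor:sp.shrk.conn.conf.sp:mn} take $\G = \GL(n)$, $L = \bC^n$, with $V = T^+(n)$ (via Schur triangularization) for $\cX_n = M_n$ and $V = 0$ for $\cX_n = (M_n)_{ss}$. For \eqref{item:cor:sp.shrk.conn.conf.sp:gl} keep the same $\G$ and $V$'s but restrict to $L = (\bC^\times)^n$. For \eqref{item:cor:sp.shrk.conn.conf.sp:sl} I would still take $\G = \GL(n)$ (which conjugates $\SL(n)$ into itself) with $L = \{\lambda \in (\bC^\times)^n : \lambda_1 \cdots \lambda_n = 1\}$ and the same two choices of $V$; using $\GL(n)$ rather than $\SL(n)$ ensures $\G \cap S_n = S_n$ is as large as possible, which matters for the transitivity verification. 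Finally for \eqref{item:cor:sp.shrk.conn.conf.sp:u} and \eqref{item:cor:sp.shrk.conn.conf.sp:norm} I take $\G = \U(n)$, $V = 0$ (by the spectral theorem), with $L = (S^1)^n$ and $L = \bC^n$ respectively. In every case $L \setminus \Delta_L$ is dense in $L$ because $\Delta_L$ is cut out by the finitely many codimension-one equations $\lambda_j = \lambda_k$.

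The bulk of the work is then the transitivity condition. In cases \eqref{item:cor:sp.shrk.conn.conf.sp:mn}--\eqref{item:cor:sp.shrk.conn.conf.sp:sl} and \eqref{item:cor:sp.shrk.conn.conf.sp:norm} the ambient $L$ is a connected smooth complex subvariety of $\bC^n$ and $\Delta_L$ has complex codimension one there, so $L \setminus \Delta_L$ is connected; its sole isotropy group in $\G \cap S_n = S_n$ is then all of $S_n$, which is manifestly transitive on $\{1, \ldots, n\}$. The main obstacle is case \eqref{item:cor:sp.shrk.conn.conf.sp:u}, where $L = (S^1)^n$ is a real $n$-torus and $\Delta_L$ has only real codimension one; accordingly $L \setminus \Delta_L$ breaks into $(n-1)!$ connected components, one per cyclic ordering of $n$ distinct points on the oriented circle. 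Here I would identify the $S_n$-stabilizer of any one such component with the cyclic subgroup of $S_n$ of order $n$ generated by the $n$-cycle producing that ordering; this subgroup acts transitively on $\{1, \ldots, n\}$, so the hypothesis of Theorem \ref{th:sp.shrk.conn.conf.sp} holds and its conclusion follows in every listed case.
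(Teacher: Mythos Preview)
Your proposal is correct and follows essentially the same approach as the paper: the same choices of $\G$, $L$, $V$ in each case, the same connectedness argument for cases \ref{item:cor:sp.shrk.conn.conf.sp:mn}--\ref{item:cor:sp.shrk.conn.conf.sp:sl} and \ref{item:cor:sp.shrk.conn.conf.sp:norm} (the paper cites a reference for the fact that removing a proper closed subvariety from a connected complex variety leaves it connected), and the same identification of the component stabilizers in case \ref{item:cor:sp.shrk.conn.conf.sp:u} with conjugates of the cyclic group $\langle (1\ 2\ \cdots\ n)\rangle$, which the paper establishes in a separate lemma.
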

\begin{proof}
  This is a simple matter of specifying how to fit the various cases into the framework of Theorem \ref{th:sp.shrk.conn.conf.sp} by setting the parameters $\G$, $L$ and $V$.
  
  \begin{enumerate}[(I),wide]
  \item \textbf{: The group $\G$.} For \ref{item:cor:sp.shrk.conn.conf.sp:mn}, \ref{item:cor:sp.shrk.conn.conf.sp:gl} and \ref{item:cor:sp.shrk.conn.conf.sp:sl} it is $\G:=\GL(n)$, while in the other two cases it is $\G:=\U(n)$.

  \item \textbf{: The subspace $V\le T^{+}(n)$.} All of $T^{+}(n)$ in \ref{item:cor:sp.shrk.conn.conf.sp:mn}, \ref{item:cor:sp.shrk.conn.conf.sp:gl} and \ref{item:cor:sp.shrk.conn.conf.sp:sl} for arbitrary (as opposed to semisimple) matrices, and $V:=\{0\}$ in all other cases

  \item \textbf{: The subset $L\subseteq \bC^n$.} It is $L:=\bC^n$ in \ref{item:cor:sp.shrk.conn.conf.sp:mn}, $(\bC^{\times})^n$ in \ref{item:cor:sp.shrk.conn.conf.sp:gl},
    \begin{equation*}
      \left\{(\lambda_1, \ldots, \lambda_n)\in \bC^n\ :\ \prod_{j=1}^n \lambda_j=1\right\}
      \text{ in \ref{item:cor:sp.shrk.conn.conf.sp:sl}},
    \end{equation*}
    $(\bS^1)^n$ in \ref{item:cor:sp.shrk.conn.conf.sp:u} and again $\bC^n$ in \ref{item:cor:sp.shrk.conn.conf.sp:norm}. 
  \end{enumerate}
  
  In cases \ref{item:cor:sp.shrk.conn.conf.sp:mn}, \ref{item:cor:sp.shrk.conn.conf.sp:gl}, \ref{item:cor:sp.shrk.conn.conf.sp:sl} and \ref{item:cor:sp.shrk.conn.conf.sp:norm}, where $L$ is a connected complex algebraic variety and $\Delta_L$ a closed algebraic subset thereof, the complement $L\setminus \Delta_{L}$ is connected \cite[Proposition 8.3]{zbMATH06468787} so there is nothing further to check with regard to the transitivity hypothesis.
  
  In case \ref{item:cor:sp.shrk.conn.conf.sp:u} $L\setminus \Delta_{L}$ is precisely the $n^{th}$ \emph{configuration space} \cite[Definition 1.1]{zbMATH05785888} $\cC^n(\bS^1)$ consisting of $n$-tuples of distinct modulus-$1$ complex numbers. As we will see in Lemma \ref{le:un.cycl.conn}, $\cC^n(\bS^1)$ is disconnected as soon as $n\ge 3$. The same result, however, shows that the isotropy groups of the connected components are the (transitive) conjugates of the subgroup generated by the cycle $(1\ 2\ \cdots\ n)$.
\end{proof}  

In the following statement we denote by $\braket{\eta}$ the cyclic subgroup generated by an element $\eta\in S_n$ and by $\pi_0(X)$ the space of components of $X$; the space $X=\cC^n(\bS^1)$ we are concerned with being a manifold, components and path components coincide \cite[Propositions 4.23 and 4.26]{lee_top-mfld_2e_2011}. 

\begin{lemma}\label{le:un.cycl.conn}
  Let $n\in \bZ_{\ge 1}$. The symmetric group $S_n$ acts transitively on the space $\pi_0(\cC^n(\bS^1))$ of components of the circle's $n^{th}$ configuration space, and the isotropy groups are the conjugates
  \begin{equation*}
    g\braket{\eta}g^{-1}\le S_n
    ,\quad
    g\in S_n
    ,\quad
    \eta:=(1\ 2\ \cdots\ n).
  \end{equation*}
\end{lemma}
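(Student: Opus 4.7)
The plan is to produce an $S_n$-equivariant bijection between $\pi_0(\cC^n(\bS^1))$ and the set $\mathrm{Cyc}_n$ of cyclic orderings of $\{1,\ldots,n\}$, after which the lemma reduces to a routine orbit-stabiliser computation on $\mathrm{Cyc}_n$. Fix an orientation on $\bS^1$. To each $(z_1,\ldots,z_n)\in\cC^n(\bS^1)$ I would assign the \emph{cyclic type} $\Phi(z_1,\ldots,z_n)\in\mathrm{Cyc}_n$ obtained by listing the labels $1,\ldots,n$ in the order in which the corresponding points are met as one traverses $\bS^1$ positively. Since small perturbations preserve cyclic order, $\Phi$ is locally constant and descends to a map $\overline{\Phi}\colon \pi_0(\cC^n(\bS^1))\to \mathrm{Cyc}_n$.

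Next I would verify that $\overline{\Phi}$ is a bijection. Surjectivity is immediate: a cyclic word $[c_1,\ldots,c_n]$ is realised by $z_{c_k}:=e^{2\pi\mathrm{i}k/n}$. For injectivity, given configurations $(z_j)$ and $(w_j)$ with $\Phi(z)=\Phi(w)$, a continuous simultaneous rotation of all entries first brings $z_1$ to $w_1$; then the remaining $n-1$ points of each configuration, parameterised by the interval $(0,2\pi)\cong\bS^1\setminus\{z_1\}$, correspond to the same permutation $\tau$ of strictly ordered real coordinates. The chamber $\{(\theta_2,\ldots,\theta_n)\in(0,2\pi)^{n-1} : \theta_{\tau(2)}<\cdots<\theta_{\tau(n)}\}$ is convex, so a straight-line homotopy inside it provides a path in $\cC^n(\bS^1)$ connecting the two configurations.

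To conclude, under convention \eqref{eq:sn.act} the action of $\sigma\in S_n$ moves the point originally carrying label $j$ to position $\sigma(j)$, so $\Phi$ is equivariant for the action $\sigma\cdot[c_1,\ldots,c_n]:=[\sigma(c_1),\ldots,\sigma(c_n)]$ on $\mathrm{Cyc}_n$. This action is transitive (send $c_i\mapsto i$), and the stabiliser of $[1,2,\ldots,n]$ consists of those $\sigma$ for which $(\sigma(1),\ldots,\sigma(n))$ is a cyclic rotation of $(1,\ldots,n)$, i.e.\ precisely the powers of $\eta$. Orbit-stabiliser then identifies the isotropy groups of the components as the conjugates $g\langle\eta\rangle g^{-1}$, $g\in S_n$, yielding both claims of the lemma. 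The only non-routine step is the injectivity of $\overline{\Phi}$, but as sketched it reduces to convexity of each ordered chamber in $(0,2\pi)^{n-1}$ and presents no essential difficulty.
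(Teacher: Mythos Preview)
Your proposal is correct and follows essentially the same route as the paper. Both arguments define a map from configurations to $S_n/\langle\eta\rangle$ (you phrase the target as cyclic orderings $\mathrm{Cyc}_n$, the paper as cosets directly), check equivariance, and prove injectivity by rotating one point to a fixed position and then invoking convexity of the resulting ordered chamber in $(0,2\pi)^{n-1}$; the paper phrases that last step as the interior of an $(n-1)$-simplex mapping continuously into $(\bS^1)^n$, which is the same observation.
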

\begin{proof}
  Consider $n$ distinct points $z_1, \ldots,z_n\in \bS^1$. Traversing the circle counterclockwise will enumerate the points as
  \begin{equation*}
    z_{\tau(1)},\ z_{\tau(2)},\ldots,  z_{\tau(n)} 
  \end{equation*}
  for some permutation $\tau$, uniquely determined up to right multiplication by powers of $\eta$. Associating the class of $\tau$ in $S_n/\braket{\eta}$  to the component in question provides a map 
  \begin{equation}\label{eq:assoc.coset.class}
    \pi_0(\cC^n(\bS^1))
    \xrightarrow[\quad\text{equivariant for the $S_n$-actions on the (co)domain}\quad]{\quad}
    S_n/\braket{\eta}.
  \end{equation}
  That map is surjective because the $S_n$-action on its codomain $S_n/\braket{\eta}$ is transitive, and the conclusion will follow once we observe that \eqref{eq:assoc.coset.class} is also injective.

  It will suffice to prove that the preimage of $\braket{\eta}\in S_n/\braket{\eta}$ is a singleton (the argument is not materially different for other cosets). In other words, the goal is to show that
  \begin{equation}\label{eq:cc.zis}
    \left\{
      (z_1, \ldots, z_n)\in \cC^n(\bS^1)\ :\ \text{$z_j$ are ordered counterclockwise increasingly with $j$}
    \right\}
  \end{equation}
  is (path-)connected. To that end, note that a counterclockwise rotation allows us to assume $z_1=1\in \bS^1$, and the resulting subset of \eqref{eq:cc.zis} is
  \begin{equation*}
    \left\{
      \left(\exp(2\pi i x_1),\ \cdots,\ \exp(2\pi i x_n)\right)
      \ :\
      0=x_1<x_2<\cdots<x_n<2\pi
    \right\}.
  \end{equation*}
  This is the image through the continuous map $\bR^n\xrightarrow{\exp(2\pi i \bullet)^n}(\bS^1)^n$ of the (path-connected) interior of an $(n-1)$-simplex.
\end{proof}

\begin{remarks}\label{rem:Theorem main}
  \begin{enumerate}[(1),wide]
  \item If $m=rn$ for some $r \in \Z_{\ge 1}$, then an apparent class of continuous maps $\phi : \mathcal{X}_n \to M_m=M_r(M_n)$ which satisfy \eqref{eq:k.is.pow} is given by
    \begin{equation*}
      \phi(X)=S(X) \begin{pmatrix} X \otimes I_p & 0 \\ 0 & X^t  \otimes I_q\end{pmatrix} S(X)^{-1},
    \end{equation*} 
    where $S:\ca{X}_n \to \GL(m)$ is a continuous function and $p,q \in \Z_{\ge 0}$ are such that $p+q=r$. 
  \item The ``obvious analogue'' of Corollary \ref{cor:sp.shrk.conn.conf.sp} does not hold for the corresponding maps $\phi : H_n \to M_m$ defined on the real subspace $H_n$ of $n \times n$ self-adjoint matrices, as for any $m \in \bZ_{\ge 1}$ we can define $\phi(X):=\lambda_{\max}(X) I_m$, where $\lambda_{\max}(X)$ denotes the largest eigenvalue of $X\in H_n$ (which continuously depends on $X$).

  \item An analogous remark applies to $\cX_n:=\SU(n)$, the special unitary group: Corollary \ref{cor:sp.shrk.conn.conf.sp} does not hold for it, because for any $n \in \bZ_{\ge 1}$ there exists a continuous eigenvalue selection $\varphi : \SU(n) \to \bS^1$. As before, for any $m \in \bZ_{\ge 1}$ the assignment $X \mapsto \varphi(X)I_m$ defines a continuous, commutativity-preserving and spectrum-shrinking map $\SU(n) \to M_m$.
    
    To see that there is a continuous eigenvalue selection $\SU(n)\to \bS^1$, first identify (per \cite[Proposition IV.2.6]{btd_lie_1985}) the space $\SU(n)/\Ad_{\SU(n)}$ of conjugacy classes with the quotient $\bT/S_n$ of the maximal torus
    \begin{equation*}
      \bT:=\left\{(\lambda_1, \ldots, \lambda_n)\in \left(\bS^1\right)^n\ :\ \prod_{j=1}^n \lambda_j=1\right\}
    \end{equation*}
    by the \emph{Weyl group} $S_n$ of $\SU(n)$ \cite[Theorem IV.3.3]{btd_lie_1985}. Further, \cite[Lemmas 1 and 2]{MR210096} show that
    \begin{equation*}
      (x_1,\ldots, x_n)
      \xmapsto{\quad}
      \left(\exp(2\pi i x_1),\ldots, \exp(2\pi i x_n)\right)
    \end{equation*}
    implements a homeomorphism between
    \begin{equation*}
      F:=\left\{(x_1, \ldots, x_n)\in \bR^n\ :\ \sum_{j=1}^n x_j=0,\ x_1\le x_2\le \cdots\le x_n\le x_1+1\right\}
    \end{equation*}
    and $\bT/S_n$. We can now simply take our continuous eigenvalue selection to be
    \begin{equation*}
      \SU(n)
      \xrightarrowdbl{\quad}
      \SU(n)/\Ad_{\SU(n)}
      \cong
      \bT/S_n
      \cong
      F
      \ni
      (x_1, \ldots, x_n)
      \xmapsto{\quad}
      \exp(2\pi i x_1)
      \in \bS^1.
    \end{equation*}
  \end{enumerate}
\end{remarks}




One consequence of Corollary \ref{cor:sp.shrk.conn.conf.sp} is a generalization of the well-known fact that for $n \geq 2$ there is no (global) continuous selection of eigenvalues of matrices in $M_n$ (i.e.\ there does not exist a continuous map $\phi : M_n\to \C$ such that $\phi(X) \in \spc(X)$ for all $X \in M_n$, see e.g.\ \cite[Example~2]{LiZhang} and Proposition \ref{pr:nbhd.eig.sel.m} below for a more elaborate statement):

\begin{corollary}
  Let $m,n \in \bZ_{\ge 1}$ and let $\mathcal{X}_n$ be any one of the spaces of Corollary \ref{cor:sp.shrk.conn.conf.sp}.
  \begin{itemize}[wide]
  \item[(a)] If $n$ does not divide $m$, there does not exist a continuous spectrum-shrinking map $\phi: \mathcal{X}_n\to M_m$.
  \item[(b)] If $n$ divides $m$, then any continuous spectrum-shrinking map $\phi: \mathcal{X}_n\to M_m$ is in fact a spectrum preserver.
  \end{itemize}
\end{corollary}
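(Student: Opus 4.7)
The proof is essentially immediate from Corollary \ref{cor:sp.shrk.conn.conf.sp}, which contains all the substantive content; this corollary is just a matter of extracting the set-theoretic consequences of the characteristic-polynomial identity \eqref{eq:k.is.pow}.

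For part (a), the plan is to argue by contraposition: the existence of a continuous spectrum-shrinking map $\phi:\cX_n\to M_m$ forces, via Corollary \ref{cor:sp.shrk.conn.conf.sp}, the divisibility condition $n\mid m$. Hence if $n\nmid m$ no such $\phi$ can exist.

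For part (b), assuming $n\mid m$, Corollary \ref{cor:sp.shrk.conn.conf.sp} yields
\begin{equation*}
  k_{\phi(X)} \;=\; (k_X)^{m/n}
  ,\quad
  \forall X\in \cX_n.
\end{equation*}
Since the spectrum of a matrix coincides with the set of roots of its characteristic polynomial (ignoring multiplicities), and since raising a polynomial to a positive integer power does not change its root set, we conclude $\spc(\phi(X))=\spc(X)$ for every $X\in\cX_n$, i.e.\ $\phi$ preserves spectra.

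There is no genuine obstacle here; the only mild care needed is to note that ``spectrum'' refers to the set of eigenvalues rather than to the multiset, so the factor $m/n$ in the exponent is harmless.
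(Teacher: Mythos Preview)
Your argument is correct and is exactly the intended one: the paper states this corollary without proof, treating it as an immediate consequence of Corollary~\ref{cor:sp.shrk.conn.conf.sp}, and your extraction of parts (a) and (b) from the divisibility criterion and the characteristic-polynomial identity \eqref{eq:k.is.pow} is precisely how the deduction goes.
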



The second direct consequence is the following strengthening of Theorem \ref{thm:Semrl}:
\begin{corollary}\label{cor:spectrum-shrinking}
  Let $\phi : M_n \to M_n, n \ge 3$, be a continuous  commutativity-preserving and spectrum-shrinking map. Then there exists $T \in\mathrm{GL}(n)$ such that $\phi$ is of the form \eqref{eq:inner}.
\end{corollary}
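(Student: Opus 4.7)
The plan is to deduce this directly from Corollary~\ref{cor:sp.shrk.conn.conf.sp} combined with \v{S}emrl's Theorem~\ref{thm:Semrl}. Specifically, apply Corollary~\ref{cor:sp.shrk.conn.conf.sp}\ref{item:cor:sp.shrk.conn.conf.sp:mn} in the case $\cX_n = M_n$ and $m = n$, so that the divisibility condition $n \mid m$ is trivially satisfied with quotient $m/n = 1$. The continuity and spectrum-shrinking hypotheses on $\phi$ are precisely what is needed to invoke that corollary, which yields the equality of characteristic polynomials
\begin{equation*}
  k_{\phi(X)} = (k_X)^{m/n} = k_X \quad \text{for all } X \in M_n.
\end{equation*}
In particular $\spc(\phi(X)) = \spc(X)$ for every $X \in M_n$, so the a priori weaker spectrum-shrinking hypothesis has now been upgraded to full spectrum preservation.

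With this upgrade in hand, $\phi$ satisfies all the hypotheses of \v{S}emrl's Theorem~\ref{thm:Semrl}: it is continuous, commutativity-preserving, and spectrum-preserving, with $n \ge 3$. Applying that theorem directly produces $T \in \GL(n)$ such that $\phi$ has one of the two forms in \eqref{eq:inner}, which is the desired conclusion.

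There is essentially no obstacle here beyond correctly invoking the two results: the content of the corollary is precisely the observation that, in the presence of continuity, the gap between ``spectrum-shrinking'' and ``spectrum-preserving'' in \v{S}emrl's theorem disappears for self-maps of $M_n$, because the only possible value of $m/n$ is $1$.
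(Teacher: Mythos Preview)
Your proposal is correct and matches the paper's approach exactly: the corollary is stated there as a ``direct consequence'' with no further proof, and the intended argument is precisely to upgrade spectrum-shrinking to spectrum-preserving via Corollary~\ref{cor:sp.shrk.conn.conf.sp}\ref{item:cor:sp.shrk.conn.conf.sp:mn} (with $m=n$) and then invoke Theorem~\ref{thm:Semrl}.
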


In the proofs of Theorems \ref{th:sp.shrk.conn.conf.sp} and \ref{th:main-result-gln} we shall use the following simple lemma:
\begin{lemma}\label{le:topological-lemma}
  Let $X$ and  $Y$ be topological spaces such that $X$ is connected and $Y$ is Hausdorff. Let $n\in \bZ_{\ge 1}$ and suppose that $g,f_1,\ldots,f_n : X \to Y$ are continuous functions such that for each $x \in X$ we have
  \begin{itemize}[wide]
  \item $g(x) \in \{f_1(x),\ldots,f_n(x)\}$,
  \item $f_j(x) \ne f_k(x)$ for all $1 \le j \ne k \le n$ and all $x\in X$.
  \end{itemize}
  Then there exists $1 \leq j \leq n$ such that $g=f_j$.
\end{lemma}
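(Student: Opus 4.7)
The plan is to partition $X$ into the preimages of agreement between $g$ and each $f_j$, and then use connectedness to show that only one of these preimages can be non-empty.

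Concretely, for each $j\in\{1,\ldots,n\}$ set
\begin{equation*}
  A_j := \{x\in X \ :\ g(x)=f_j(x)\} = (g,f_j)^{-1}(\Delta_Y),
\end{equation*}
where $\Delta_Y\subseteq Y\times Y$ is the diagonal. Since $Y$ is Hausdorff, $\Delta_Y$ is closed, and since $(g,f_j):X\to Y\times Y$ is continuous, each $A_j$ is closed in $X$. The first hypothesis gives $X=\bigcup_{j=1}^n A_j$.

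The key observation is that the sets $A_j$ are pairwise disjoint: if some $x$ belonged to $A_j\cap A_k$ for $j\ne k$, then $f_j(x)=g(x)=f_k(x)$, contradicting the second hypothesis. Hence $X$ is the disjoint union of finitely many closed sets $A_1,\ldots,A_n$, which makes each $A_j$ simultaneously closed and open (its complement being the finite union $\bigcup_{i\ne j}A_i$ of closed sets).

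Since $X$ is connected, exactly one $A_j$ equals $X$ and the rest are empty. For that particular $j$ we have $g(x)=f_j(x)$ for every $x\in X$, i.e.\ $g=f_j$, which is what was to be shown. There is no substantive obstacle here; the only mild point worth noting is that finiteness of the index set is essential for concluding that each $A_j$ is open from the fact that the union of the others is closed.
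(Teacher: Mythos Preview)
Your proof is correct and follows the same approach as the paper's: define the closed sets $A_j=\{x\in X:g(x)=f_j(x)\}$, note they partition $X$, and invoke connectedness. The paper's version is terser, but your added justification (closedness via the diagonal and Hausdorffness, explicit disjointness, openness of each $A_j$) simply spells out what the paper leaves implicit.
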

\begin{proof}
  The sets $\{x \in X : g(x) = f_1(x)\},\ldots, \{x \in X : g(x) = f_n(x)\}$ are closed and form a partition of $X$. By the connectedness of $X$, exactly one of them is nonempty and hence equal to all of $X$.
\end{proof}


\begin{proof}[Proof of Theorem \ref{th:sp.shrk.conn.conf.sp}]
  The ``only if'' part of the theorem is covered by Remark \ref{rem:Theorem main}(1).

\smallskip

Denote by $\bC_{\le m}[x]$ the space of complex polynomials of degree $\leq m$, equipped with the unique Hausdorff linear topology, compatible with that of $\bC$. The function
  \begin{equation*}
    \Ad_{\G}T_{L,V}
    \xrightarrow{\quad k_{\phi(\cdot)}\quad}
    \bC_{\le m}[x]
  \end{equation*}
  is continuous, and on an element with spectrum $(\lambda_1, \ldots, \lambda_n)$ takes one of the finitely many values $\prod_{j=1}^n (x-\lambda_j)^{k_j}$ for non-negative integers $k_1, \ldots, k_n$ with sum $m$. It is enough to prove the claimed equality \eqref{eq:k.is.pow} for matrices of the form
  \begin{equation}\label{eq:t.simple.spec}
    T=g \left(\diag(\lambda_1, \ldots, \lambda_n)+v\right)g^{-1}
    ,\quad
    g\in \G,\ (\lambda_1, \ldots, \lambda_n) \in L':=L\setminus \Delta_{L},\ v\in V,
  \end{equation}
  as such matrices (whose eigenvalues are all distinct) form a dense subspace of $\Ad_{\G}T_{L,V}$. By Lemma \ref{le:topological-lemma} for $T$ as in \eqref{eq:t.simple.spec} the characteristic polynomial $k_{\phi(T)}$ depends only on $(\lambda_1, \ldots, \lambda_n)\in L'$: $g$ and $v$ range over the (path-)connected spaces $\G$ and $V$ respectively. Furthermore, by the selfsame Lemma \ref{le:topological-lemma}, for every connected component $L'_0$ of $L'$ we have  
  \begin{equation}\label{eq:kil0}
    k_{\phi(\diag(\lambda_1, \ldots, \lambda_n))}
    =
    \prod_{j=1}^n (x-\lambda_j)^{k_{j,L'_0}},
    \quad
   \forall (\lambda_1, \ldots, \lambda_n)\in L'_0.
  \end{equation}
  Now, because we are furthermore assuming the isotropy group of $L'_0$ in $S_n\cap \G$ is transitive and $k_{\phi(\cdot)}$ is invariant under $\G$-conjugation, the exponents of \eqref{eq:kil0} must in fact all be equal. Indeed, consider $1\le  \ell<\ell'\le n$ and $\sigma \in S_n\cap \G$ fixing $L'_0$ with $\sigma(\ell)=\ell'$. We then have
  \begin{equation*}
    \begin{aligned}
      (x-\lambda_{\ell'})^{k_{\ell',L'_0}}
      \cdot
      \prod_{j\ne \ell'}^n (x-\lambda_j)^{k_{j,L'_0}}
      &=
        \prod_{j=1}^n (x-\lambda_j)^{k_{j,L'_0}}\\
      &=
        k_{\phi(\diag(\lambda_1, \ldots, \lambda_n))}
        \quad\text{by \eqref{eq:kil0}}\\
      &=
        k_{\phi(\sigma^{-1} \diag(\lambda_1, \ldots, \lambda_n)\sigma)}
        \quad\text{by the $\G$-invariance of $k_{\phi(\cdot)}$}\\
      &=
        k_{\phi(\diag(\lambda_{\sigma(1)}, \ldots, \lambda_{\sigma(n)}))}
      \quad\text{by \eqref{eq:sn.act}}\\
      &=
        \prod_{j=1}^n \left(x-\lambda_{\sigma(j)}\right)^{k_{j,L'_0}}
        \quad\text{by \eqref{eq:kil0} again, because $\sigma$ fixes $L'_0$}\\
      &=
        (x-\lambda_{\ell'})^{k_{\ell,L'_0}}
        \cdot
        \prod_{j\ne \ell}^n (x-\lambda_{\sigma(j)})^{k_{j,L'_0}}
    \end{aligned}    
  \end{equation*}
  and hence $k_{\ell',L'_0} = k_{\ell,L'_0}$.
\end{proof}

\begin{remarks}\label{res:loc.spc.shrk}
  An essential feature of $\ca{X}_n$ in Corollary \ref{cor:sp.shrk.conn.conf.sp} is the nonexistence of a continuous eigenvalue selection $\ca{X}_n \to \C$. 
  \begin{enumerate}[(1),wide]


  \item\label{item:res:loc.spc.shrk:unl} Even though Corollary \ref{cor:sp.shrk.conn.conf.sp} holds for $\ca{X}_n = \U(n)$, it does not hold for any of its open dense subsets
    \begin{equation*}
      \U(n)_{\lambda}:=\{X \in \U(n) \ : \ \lambda \notin \spc(X)\},
    \end{equation*}
    where $\lambda \in \bS^1$. Indeed, for each $X \in \U(n)_{\lambda}$ choose $\varphi(X) \in \spc(X)$ with the largest value of the corresponding (continuous) branch of the argument, defined on $\C\setminus \{t\lambda : t \geq 0\}$. Then as before, if $n \geq 2$, the assignment $\U(n)_{\lambda} \to M_m$, $X \mapsto \varphi(X)I_m$, defines a continuous proper spectrum shrinker. In particular, as $\{\U(n)_\lambda \ : \ \lambda \in \bS^1\}$ forms on open cover of $\U(n)$, any unitary matrix admits a local continuous eigenvalue selection.
    
  \item\label{item:res:loc.spc.shrk:mnloc} Fix any norm $\norm{\cdot}$ on $M_n$. If $\ca{X}_n\in \{M_n, \GL(n)\}$, we can slightly strengthen the statement of Corollary \ref{cor:sp.shrk.conn.conf.sp} for maps defined on $B_{\norm{\cdot}}(0_n,r)\cap \ca{X}_n$, for any $r>0$ (instead of the whole $\ca{X}_n$). Indeed, if $\psi :  B_{\norm{\cdot}}(0_n,r)\cap \ca{X}_n \to M_m$ is a continuous spectrum-shrinking map, then this follows by applying Corollary \ref{cor:sp.shrk.conn.conf.sp} to the map
    \begin{equation*}\phi : \ca{X}_n \to M_m, \qquad \phi(X) := \frac{1+\norm{X}}{r}\psi\left(\frac{r X}{1+\norm{X}}\right),\end{equation*}
    which satisfies the same properties. 

  \end{enumerate}
\end{remarks}

Remark \ref{res:loc.spc.shrk}\ref{item:res:loc.spc.shrk:mnloc} implies that no neighborhood of $0_n\in M_n$ admits a continuous eigenvalue selector (by contrast to the parallel situation in $\U(n)$, per Remark \ref{res:loc.spc.shrk}\ref{item:res:loc.spc.shrk:unl}). The following result answers the natural question of which matrices share this property.

\begin{proposition}\label{pr:nbhd.eig.sel.m}
  Let $n\in \bZ_{\ge 1}$. A matrix in either $M_n$ or $\GL(n)$ has a neighborhood admitting a continuous eigenvalue selection if and only if it has at least one simple eigenvalue. 
\end{proposition}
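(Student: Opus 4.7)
The $(\Leftarrow)$ direction is a direct application of the holomorphic implicit function theorem to $F(X,z) := k_X(z) = \det(zI - X)$, polynomial in the entries of $X$ and in $z$. A simple eigenvalue $\lambda$ of $A$ gives $\partial_z F(A,\lambda) = k_A'(\lambda) \neq 0$, so the theorem furnishes an analytic eigenvalue selector on a neighborhood of $A$ in $M_n$; restriction handles $\GL(n)$.

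For the $(\Rightarrow)$ direction, I argue the contrapositive. Assuming every eigenvalue of $A$ has multiplicity $\ge 2$ and that $\phi : U \to \bC$ is a continuous eigenvalue selector on a neighborhood $U$ of $A$, set $\lambda_0 := \phi(A)$, which then has multiplicity $k \ge 2$. A first reduction uses the $A$-invariant direct sum decomposition $A = A_1 \oplus A_2$ with $\spc(A_1) = \{\lambda_0\}$ (so $A_1 \in M_k$) and $\spc(A_2)$ bounded away from $\lambda_0$: the formula $\psi(X_1) := \phi(X_1 \oplus A_2)$ is continuous near $A_1$, and continuity together with the spectral gap forces $\psi(X_1) \in \spc(X_1)$. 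Shifting by $\lambda_0$, I may assume $A_1 = N$ is nilpotent in $M_k$, $k \ge 2$, and $\psi$ is a continuous eigenvalue selector around $N$.

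The next step splits on whether $N$ is zero. If $N = 0$ (i.e., $A$ restricted to its $\lambda_0$-block is scalar), I rescale: define $\Phi(X) := \psi(\varepsilon(X)\, X)/\varepsilon(X)$ with $\varepsilon(X) := \delta/(1 + \|X\|)$ and $\delta > 0$ small enough that $\varepsilon(X) X$ remains in the domain of $\psi$. This produces a \emph{global} continuous eigenvalue selector on all of $M_k$, contradicting Corollary \ref{cor:sp.shrk.conn.conf.sp} since $k \nmid 1$; the $\GL(n)$-case is handled by the analogous rescaling about $\lambda_0 I$, so that all rescaled matrices remain in $\GL(k)$.

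If $N \neq 0$, the plan is a monodromy/loop argument. Using the density of the regular nilpotent orbit $\Ad_{\GL(k)} J_k(0)$ in the nilpotent cone, pick some $N' = g^{-1} J_k(0) g$ lying in the domain of $\psi$, and form the loop $\gamma(\theta) := N' + r e^{i\theta}\, g^{-1} e_k e_1^{t} g$, $\theta \in [0, 2\pi]$, which stays in the domain for small $r > 0$. Its spectrum coincides with that of $J_k(0) + r e^{i\theta} e_k e_1^{t}$, namely the $k$ distinct branches $r^{1/k} e^{i\theta/k} \zeta^j$, $j = 0, \dots, k-1$, with $\zeta := e^{2\pi i/k}$, viewed as continuous functions of $\theta$. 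Lemma \ref{le:topological-lemma} then forces $\psi \circ \gamma$ to equal one of these branches, each of which picks up a factor of $\zeta$ going from $\theta = 0$ to $\theta = 2\pi$, contradicting $\gamma(0) = \gamma(2\pi)$. The analogous reasoning in $\GL(n)$ runs verbatim with the loop based at $\lambda_0 I + N' \in \GL(k)$. The density of regular nilpotents—provable elementarily by iteratively linking Jordan blocks via a rank-one perturbation—is the one input beyond routine continuity arguments, and is the step I expect to warrant the most care.
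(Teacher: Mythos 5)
Your proof is correct and follows essentially the same line as the paper's: reduce via the generalized eigenspace decomposition to a nilpotent block, pass to a nearby regular nilpotent, and derive a contradiction from the monodromy of the $k$-th roots (the paper phrases this last step via the explicit family $X_z$ with characteristic polynomial $x^k\pm z$, citing \cite[Example 2]{LiZhang}, while you make the loop and the rotation-by-$\zeta$ argument explicit; these are the same idea). Two minor remarks: for $(\Leftarrow)$ you use the holomorphic implicit function theorem where the paper invokes Newburgh's continuity of spectra — both are fine and give the same conclusion. And your case split on $N=0$ versus $N\neq 0$ is unnecessary: regular nilpotents are dense near $0_k$ as well (take $\varepsilon J_k(0)\to 0$), so the monodromy loop argument already covers $N=0$ and the rescaling detour through Corollary \ref{cor:sp.shrk.conn.conf.sp} can be dropped.
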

\begin{proof}
  The statement for $M_n$ will of course recover that for $\GL(n)$, the latter being an open subspace of the former. We thus focus on $M_n$.
  
  \begin{enumerate}[(I),wide]
  \item \textbf{: $(\Leftarrow)$} Suppose $\lambda$ is a simple spectrum point for $X\in M_n$. By the continuity of both characteristic polynomials and spectra \cite[Theorem 3]{Newburgh}, for small $\varepsilon>0$ and a sufficiently small neighborhood $\cU\ni X$ in $M_n$ every $Y\in \cU$ has a unique eigenvalue
    \begin{equation*}
      \lambda_Y
      \in
      \left\{z\in \bC\ :\ |z-\lambda|<\varepsilon\right\}.
    \end{equation*}
    This provides us with desired (plainly continuous) selector: $Y\mapsto \lambda_Y$ for $Y\in \cU$. 
    
  \item \textbf{: $(\Rightarrow)$} Assume now that all spectrum points of $X\in M_n$ have algebraic multiplicity $\ge 2$, and that we have a continuous eigenvalue selector
    \begin{equation*}
      \cU\ni Y
      \xmapsto{\quad}
      \lambda_Y
      \in \bC
      ,\quad
      \cU\ni X\text{ a neighborhood}. 
    \end{equation*}
    Restricting attention to matrices leaving invariant the (at least $2$-dimensional, by assumption) \emph{generalized $\lambda$-eigenspace} \cite[\S 9.4]{bronson_matr}
    \begin{equation*}
      \bigcup_{m\in \bZ_{\ge 0}}\ker\left(\lambda I_n-X\right)^m
    \end{equation*}
    of $X$, we can assume the spectrum $\spc(X)$ is a singleton. Translation by $\lambda I_n\in M_n$ further reduces the problem to nilpotent operators $X$. Because moreover every neighborhood of a nilpotent operator contains another such with a single \emph{Jordan block} \cite[Definition 3.1.1]{hj_mtrx}, conjugation turns the problem into its instance for
    \begin{equation*}
      X
      :=
      J_n(0)
      :=
      \begin{pmatrix}
        0&1&\phantom{-}&\phantom{-}&\phantom{-}\\
        \phantom{-}&0&1&\phantom{-}&\phantom{-}\\
        \phantom{-}&\phantom{-}&\ddots&\ddots&\phantom{-}\\
        \phantom{-}&\phantom{-}&\phantom{-}&0&1\\
        \phantom{-}&\phantom{-}&\phantom{-}&\phantom{-}&0
      \end{pmatrix}.
    \end{equation*}
    This case, in turn, is easily handled as in \cite[Example~2]{LiZhang} (the current problem's version for $n=2$): for any neighborhood $\cU\ni 0$ in $\bC$ the $z$-dependent matrix
    \begin{equation*}
      X_z
      :=
      \begin{pmatrix}
        0&1&\phantom{-}&\phantom{-}&\phantom{-}\\
        \phantom{-}&0&1&\phantom{-}&\phantom{-}\\
        \phantom{-}&\phantom{-}&\ddots&\ddots&\phantom{-}\\
        \phantom{-}&\phantom{-}&\phantom{-}&0&1\\
        z&\phantom{-}&\phantom{-}&\phantom{-}&0
      \end{pmatrix}
    \end{equation*}
    with characteristic polynomial $x^n\pm z$ does not admit a continuous eigenvalue selector.  \qedhere
  \end{enumerate}
\end{proof}


\section{Classification results}\label{se:cls}

One of the present section's main results is the following preserver result: 


\begin{theorem}\label{th:main-result-gln}
  Let $\ca{X}_n\in \{\GL(n), \SL(n), \U(n), N_n\}$, $n \geq 3$ and $\phi : \ca{X}_n \to M_n$ a continuous commutativity-preserving and spectrum-shrinking map.

  There exists $T \in \GL(n)$ such that $\phi$ is of the form \eqref{eq:inner}.
\end{theorem}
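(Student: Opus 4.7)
The plan is to apply Corollary \ref{cor:sp.shrk.conn.conf.sp} with $m=n$ to upgrade spectrum shrinking to characteristic-polynomial preservation, and then to treat each of the four cases either by invoking Theorem \ref{th:ss} on a dense semisimple locus or by a direct torus analysis. Corollary \ref{cor:sp.shrk.conn.conf.sp} gives $k_{\phi(X)}=k_X$ for every $X\in\ca{X}_n$, so $\phi$ preserves spectrum with multiplicities, and on the open dense subset $\ca{X}_n^\circ\subseteq\ca{X}_n$ of matrices with $n$ distinct eigenvalues, $\phi(X)$ is diagonalizable and similar to $X$ in $M_n$.

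For $\ca{X}_n\in\{\GL(n),\SL(n)\}$ the semisimple locus $\ca{X}_n^{ss}$ is open and dense, so I would invoke Theorem \ref{th:ss}: the restriction $\phi|_{\ca{X}_n^{ss}}$ has the form $T(\cdot)T^{-1}$ or $T(\cdot)^tT^{-1}$ for some $T\in\GL(n)$, and continuity of both $\phi$ and the two candidate forms propagates this equality to all of $\ca{X}_n$.

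For $\ca{X}_n\in\{\U(n),N_n\}$ a more direct argument is needed; I would focus on $N_n$, the $\U(n)$ case being parallel but using Lemma \ref{le:un.cycl.conn} to handle the (disconnected) simple-spectrum part $\cC^n(\bS^1)$ of the maximal torus. Restrict $\phi$ to the diagonal normals $D_n\subseteq N_n$: commutativity preservation forces $\phi(D_n)$ to be pairwise commutative, and for $D\in D_n^\circ$ the matrix $\phi(D)$ is diagonalizable; commuting diagonalizable matrices with simple spectra share an eigenbasis, so $\phi(D_n^\circ)$ lies inside a single maximal abelian subalgebra of $M_n$. Since $D_n^\circ\cong\bC^n\setminus\Delta_{\bC^n}$ is connected, Lemma \ref{le:topological-lemma} applied to the eigenvalue labeling yields a fixed $V\in\GL(n)$ and a fixed $\pi\in S_n\le\GL(n)$ with $\phi(D)=V\pi D\pi^{-1}V^{-1}$ for all $D\in D_n^\circ$; setting $T:=V\pi$ and invoking density gives $\phi|_{D_n}=T(\cdot)T^{-1}$. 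Applying the same analysis to the commutative family $UD_nU^*$ for each $U\in\U(n)$ produces a continuous $W:\U(n)\to\GL(n)$ and a constant permutation $\pi$ with $\phi(UDU^*)=W_U\pi D\pi^{-1}W_U^{-1}$; commutativity across distinct conjugates together with compatibility at $U=I$ then constrains $W_U$ to be either $TU$ or $T\overline{U}$ modulo diagonal factors, giving the dichotomy via $(UDU^*)^t=\overline{U}D\overline{U}^{-1}$.

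The main obstacle is precisely this final rigidity step: pinning down the continuous family $W_U$ to exactly two branches. This is where the hypothesis $n\geq 3$ becomes essential, both through the $S_n$-orbit structure on the regular torus and through the fact that commutativity preservation is restrictive enough (only for $n\geq 3$) to rule out twisted alternatives. Without continuity, additional candidates such as the map \eqref{eqintro:sns} $SNS^{-1}\mapsto S^{-1}NS$ -- well-defined via the Putnam--Fuglede theorem and satisfying every other hypothesis -- would have to be accommodated; the heart of the argument is to use continuity plus the commutativity structure to eliminate them.
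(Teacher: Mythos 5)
Your plan for $\GL(n)$ and $\SL(n)$ — restrict to the dense semisimple locus, apply Theorem~\ref{th:ss}, and extend by continuity — is precisely the paper's own ``alternative proof'' of those two cases. That step is fine as stated, but note that Theorem~\ref{th:ss} depends on Proposition~\ref{pr:still.theta}, whose proof opens by invoking the unitary branch of Theorem~\ref{th:main-result-gln}. So the logical order forces you to establish the $\U(n)$ case first, independently, before any of the rest of your argument can run.

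That is exactly where your proposal has a genuine gap, one you flag yourself. For $\U(n)$ you propose producing, for each $U\in\U(n)$, a conjugator $W_U$ with $\phi(UDU^*)=W_U\pi D\pi^{-1}W_U^{-1}$, and then pinning the family $W_U$ down by continuity and commutativity across overlapping tori. But the conjugator attached to a single maximal torus is only determined up to its normalizer (e.g.\ multiplication by arbitrary invertible diagonal matrices and torus-preserving permutations), so a continuous global selection $U\mapsto W_U$ is not obviously available — this is a bundle-trivialization problem, not a pointwise one — and the ``compatibility at $U=I$ plus commutativity across conjugates'' sentence does not actually compute anything; it is a restatement of the goal. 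The paper sidesteps this entirely by never choosing conjugators: Lemma~\ref{le:def.latt.mor} shows the $\lambda$-eigenspace of $\phi(U)$ depends only on the $\lambda$-eigenspace of $U$, giving a canonical, continuous, dimension- and lattice-compatible self-map $\Psi$ of the Grassmannian, and then Proposition~\ref{pr:pres.sup} (a Fundamental Theorem of Projective Geometry argument, which is where $n\ge 3$ enters) produces a single linear or conjugate-linear $T$ implementing $\Psi$ on all subspaces simultaneously. Your sketch also treats $N_n$ as if it were the easier template, but the paper does the reverse: it settles $\U(n)$ first and then handles $N_n$ by writing a normal matrix as a linear combination of commuting unitaries and using that $\phi$ is additive on commuting families. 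As it stands, then, your proposal is incomplete at precisely the step everything else hinges on; to repair it you essentially need the eigenspace-to-Grassmannian reformulation of Lemma~\ref{le:def.latt.mor} rather than a family of torus-by-torus conjugators.
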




We gather some auxiliary remarks and results building up to a proof of Theorem \ref{th:main-result-gln}. Proposition \ref{pr:alg.cpct.tor} is essentially a variant of \cite[Lemma 2.1]{Semrl} applicable to either algebraic or unitary \emph{tori}, i.e.\ $(\bC^{\times})^n$ or $(\bS^1)^n$ respectively. The proof of the the aforementioned result adapts easily to arbitrary (as opposed to unitary) invertible matrices, but we provide a short alternative in that case as well for completeness.


\begin{proposition}\label{pr:alg.cpct.tor}
  For $n\in \bZ_{\ge 1}$ denote by $\G\leq \GL(n)$ the group of arbitrary, determinant-1, or unitary diagonal matrices. 
  
  A continuous commutativity- and spectrum-preserving map $\phi : \G \to \GL(n)$ is conjugation by some $T\in \GL(n)$. 
\end{proposition}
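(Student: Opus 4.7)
The plan is to conjugate $\phi$ by a suitable $T \in \GL(n)$ so that it fixes a chosen regular element, reduce the problem to permutations of diagonal entries, and finally address each of the three cases for $\G$ individually. First I pick $D_0 \in \G$ with $n$ distinct eigenvalues; spectrum preservation forces $\phi(D_0)$ to be diagonalizable with the same spectrum, so $\phi(D_0) = T D_0 T^{-1}$ for some $T \in \GL(n)$ (with the permutation matching eigenvalues to diagonal positions absorbed into $T$). After replacing $\phi$ by $T^{-1}\phi(\cdot)T$, I may assume $\phi(D_0) = D_0$. Every $D \in \G$ then commutes with $D_0$, so by commutativity preservation $\phi(D)$ commutes with $D_0$; since the latter has distinct entries its centralizer in $M_n$ is the algebra of diagonal matrices, so $\phi(\G)$ consists of diagonal matrices. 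Combining this with spectrum preservation yields $\phi(D) = P_{\sigma(D)} D P_{\sigma(D)}^{-1}$ for some $\sigma(D) \in S_n$. Applying Lemma~\ref{le:topological-lemma} to the connected components of the open subset $\G_{\mathrm{reg}} \subseteq \G$ of diagonal matrices with pairwise distinct entries makes $\sigma$ a constant $\sigma_C \in S_n$ on each component $C$.

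For the first two cases $\G \in \{(\bC^\times)^n, \SL\text{-diagonals}\}$, $\G_{\mathrm{reg}}$ is the complement of a proper algebraic subset of an irreducible complex variety, hence connected; a single $\sigma$ thus governs $\phi$ on $\G_{\mathrm{reg}}$, and the normalization $\phi(D_0) = D_0$, coupled with the triviality of the $S_n$-stabilizer of a regular $D_0$, forces $\sigma = e$. Density and continuity then give $\phi = \mathrm{id}$ on $\G$, so the original map is $D \mapsto T D T^{-1}$.

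The unitary case $\G = (\bS^1)^n$ is the main obstacle: by Lemma~\ref{le:un.cycl.conn} $\G_{\mathrm{reg}}$ has $(n-1)!$ components for $n \geq 3$, so the connectedness argument fails. I plan to extract a candidate permutation $\tau \in S_n$ from the axes $\gamma_k(t) := \diag(1,\dots,1,e^{2\pi it},1,\dots,1)$ (with $e^{2\pi it}$ at position $k$): continuity together with the discreteness of positions pins down the diagonal slot $\tau(k) \in \{1,\dots,n\}$ at which $e^{2\pi it}$ appears in $\phi(\gamma_k(t))$ (independent of $t \in (0,1)$). Considering the pointwise product $\gamma_{k_1}(t)\gamma_{k_2}(t)$, whose $\phi$-image must carry two copies of $e^{2\pi it}$ at distinct diagonal positions, rules out $\tau(k_1) = \tau(k_2)$ via additivity of $\phi_\ast$ on $\bZ^n$, so $\tau \in S_n$. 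The key geometric input is that every component $C$ of $\G_{\mathrm{reg}}$ accumulates at every $\gamma_k(t)$ with $t \in (0,1)$: small perturbations of the $n-1$ coincident entries of $\gamma_k(t)$ realize every linear arrangement of $n-1$ near-one points on $\bS^1$, which together with the distant $e^{2\pi it}$ exhibits all $(n-1)!$ cyclic orderings. Taking limits through $C$ in $\phi|_C(\cdot) = P_{\sigma_C}(\cdot)P_{\sigma_C}^{-1}$ at $\gamma_k(t)$ then matches the position $\sigma_C(k)$ of $e^{2\pi it}$ on the left with $\tau(k)$ on the right, forcing $\sigma_C = \tau$ independently of $C$. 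Thus $\phi = P_\tau(\cdot)P_\tau^{-1}$ on $\G$, and the normalization $\phi(D_0) = D_0$ collapses $\tau$ to the identity, giving $D \mapsto T D T^{-1}$ once again. I expect the accumulation argument to be the most delicate step, as it rests on a careful combinatorial identification of the $(n-1)!$ cyclic arrangements of $n-1$ near-coincident points around an isolated one.
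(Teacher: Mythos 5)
Your proof is correct, and for $\G = (\bS^1)^n$ it is genuinely different from the paper's. Both arguments begin identically: normalize so a fixed regular diagonal is fixed, deduce that $\phi$ lands in diagonal matrices and permutes entries, and apply Lemma~\ref{le:topological-lemma} to get a permutation $\sigma_C$ on each component $C$ of the simple-spectrum locus; for the algebraic tori that locus is connected and both proofs finish at once. In the unitary case the paper transports $C \mapsto \sigma_C$ through the identification $\pi_0(\cC^n(\bS^1)) \cong S_n/\braket{(1\ 2\ \cdots\ n)}$ to a map $\tau_\bullet$ on $S_n$, crosses one codimension-one wall (two circle points colliding) to derive the constraint $\tau_{\sigma\theta}\in\{\tau_\theta,\,\sigma\tau_\theta\}$ for transpositions $\sigma$, and concludes constancy by a combinatorial word argument involving the $n$-cycle. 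You instead head straight for the deep corners $\gamma_k(t)$ where $n-1$ entries coincide; the key geometric observation — that \emph{every} component of $\cC^n(\bS^1)$ accumulates at every such $\gamma_k(t)$, because perturbing the $n-1$ coincident entries realizes all $(n-1)!$ linear orderings of the near-$1$ cluster, i.e.\ all $(n-1)!$ cyclic orderings once the isolated $e^{2\pi it}$ is adjoined — is correct, and continuity of $\phi$ at one such corner then fixes $\sigma_C(k)$ independently of $C$ in a single stroke, bypassing the paper's wall-crossing and cycle combinatorics entirely (at the cost of being tied to the specific geometry of the circle torus). One small blemish: the aside that $\gamma_{k_1}(t)\gamma_{k_2}(t)$ rules out $\tau(k_1)=\tau(k_2)$ ``via additivity of $\phi_*$'' has no literal content, since $\phi$ is not assumed multiplicative; but it is also unnecessary — once $\sigma_C(k)=\tau(k)$ is established, $\tau$ inherits bijectivity from $\sigma_C$, and the very same limit along $C$ shows that $e^{2\pi it}$ occupies exactly one diagonal slot of $\phi(\gamma_k(t))$, which your definition of $\tau(k)$ quietly presupposes.
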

\begin{proof}
  Consider a fixed diagonal matrix $X=\diag(x_1, \ldots, x_n)\in \G$ with distinct eigenvalues $x_j$.  Its image $\phi(X)$ is a conjugate thereof (by spectrum preservation), so we may as well assume $\phi(X)=X$ for simplicity (and hence $\phi(\G)$ consists of diagonal matrices by commutativity preservation). It follows that $\phi(Y)$ simply permutes the entries of $Y$ for any $Y\in \G$ with distinct eigenvalues:
  \begin{equation*}
    \left(\forall Y=\diag(y_1, \ldots, y_n)\in \G\right)
    \left(\exists \tau\in S_n\right)
    \quad:\quad
    \phi(Y) = \diag(y_{\tau(1)},\ldots, y_{\tau(n)}).
  \end{equation*}
  As before, we identify the groups of invertible diagonal and unitary diagonal matrices in $\GL(n)$ with $(\bC^{\times})^n$ and $(\bS^1)^n$, respectively. 
  \begin{enumerate}[(I),wide]
  \item \textbf{: $\G=(\bC^{\times})^n$ or the determinant-1 tuples therein.}  That $\tau$ is the identity for all $Y\in \G$ with distinct eigenvalues follows from: the continuity of $\phi$, the fact that the space of simple-spectrum elements of $\G$ is connected (per \cite[Proposition 8.3]{zbMATH06468787}, as remarked in the proof of Corollary \ref{cor:sp.shrk.conn.conf.sp}), Lemma \ref{le:topological-lemma}, and the density of such matrices $Y$ in $\G$.
    
  \item \textbf{: $\G=(\bS^1)^n$.} By Lemma \ref{le:un.cycl.conn}, $\cC^n(\bS^1)$ is connected only for $n=1,2$, when the preceding argument applies; we henceforth assume $n \geq 3$. What Lemma \ref{le:topological-lemma} still delivers is a map sending a component $C\in \pi_0(\cC^n(\bS^1))$ to the permutation $\tau=\tau_C\in S_n$ satisfying
    \begin{equation*}
      \phi(Y) = \diag(y_{\tau(1)},\ldots, y_{\tau(n)})
      ,\quad
      \forall Y\in C,
    \end{equation*}
    and hence a function
    \begin{equation*}
      \begin{tikzpicture}[>=stealth,auto,baseline=(current  bounding  box.center)]
        \path[anchor=base] 
        (0,0) node (l) {$S_n$}
        +(4,.5) node (ul) {$S_n/\braket{(1\ 2\ \cdots\ n)}$}
        +(8,.8) node (ur) {$\pi_0(\cC^n(\bS^1))$}
        +(12,0) node (r) {$S_n$}
        ;
        \draw[->] (l) to[bend left=6] node[pos=.5,auto] {$\scriptstyle $} (ul);
        \draw[->] (ul) to[bend left=6] node[pos=.5,auto] {$\scriptstyle \text{\eqref{eq:assoc.coset.class}}$} node[pos=.5,auto,swap] {$\scriptstyle \cong$} (ur);
        \draw[->] (ur) to[bend left=6] node[pos=.5,auto,swap] {$\scriptstyle $} (r);
        \draw[->] (l) to[bend right=6] node[pos=.5,auto,swap] {$\scriptstyle \sigma\xmapsto{\quad}\tau_{\sigma}$} (r);
      \end{tikzpicture}
    \end{equation*}
    Fix a transposition $\sigma=(j\ j')$ and consider convergent tuple sequences
    \begin{equation*}
      C
      \supset
      \textbf{y}_m=(y_{m,i})_{i=1}^n
      \xrightarrow[\quad m\quad]{}
      \textbf{y}=(y_i)_i
      \xleftarrow[\quad m\quad]{}
      \sigma\textbf{y}_m=(y_{m,\sigma^{-1}(i)})_{i=1}^n
      \subset
      C'
      :=
      \sigma C
    \end{equation*}
    for a connected component $C$ of $\cC^n(\bS^1)$, with the common limit
    \begin{equation*}
      \textbf{y}\in \overline{C}\cap \overline{C'}\subseteq (\bS^1)^n      
    \end{equation*}
    having a single equality between two components (necessarily the $j^{th}$ and $(j')^{th}$). Writing $\tau:=\tau_C$ and $\tau':=\tau_{C'}$, we have
    \begin{equation*}
      (y_{\tau(i)})_i = (y_{\sigma\tau'(i)})_i
      \ \xRightarrow{\quad}\ 
      y_{\ell} = y_{\sigma\tau'\tau^{-1}(\ell)}
      ,\quad \forall 1\le \ell\le n
      \ \xRightarrow{\quad}\ 
      \tau'\in \left\{\tau,\ \sigma\tau\right\}.
    \end{equation*}
    In conclusion:
    \begin{equation}\label{eq:trnsp.lipsch}
      \left(\forall \theta\in S_n\right)
      \left(\forall\text{ transposition }\sigma\in S_n\right)      
      \quad:\quad
      \tau_{\sigma\theta}\in \left\{\tau_{\theta},\ \sigma\tau_{\theta}\right\}.
    \end{equation}
    The map $\tau_{\bullet}$, by its very definition, is constant on left cosets of $\braket{(1\ 2\ \cdots\ n)}$:
    \begin{equation}\label{eq:const.on.cosets}
      \left(\forall \theta\in S_n\right)
      \left(\forall s\in \bZ\right)
      \quad:\quad
      \tau_{\theta \eta^s} = \tau_{\theta\eta^s\theta^{-1}\cdot \theta} = \tau_{\theta}
      \quad\text{for}\quad
      \eta=(1\ 2\ \cdots\ n).
    \end{equation}

    
    Observe, next, that for every $\theta\in S_n$ and every transposition $\sigma$ there is some $s\in \bZ/n$ and decomposition
    \begin{equation}\label{eq:cdots.fix}
      \theta \eta^s\theta^{-1} = \sigma' \sigma
      ,\quad
      \text{$\sigma'\in S_n$ fixes one of the symbols involved in $\sigma$}. 
    \end{equation}
    It is enough to check this for $\eta$ itself, for conjugation by $\theta\in S_n$ will not affect the claim. Considering the $n$ symbols modulo $n$ (so that $1=n+1$, $0=n$, etc.), an arbitrary transposition $\sigma$ can be written as $(j\ j+s)$. The permutation $\sigma':=\eta^s(j\ j+s)$ fixes $j+s$; this means precisely that $\eta^s = \sigma'(j\ j+s)$ with $\sigma'$ fixing $j+s$, i.e.\ \eqref{eq:cdots.fix} holds.

    Now fix $\theta\in S_n$, a transposition $\sigma$, and a decomposition \eqref{eq:cdots.fix}. The latter reads
    \begin{equation*}
      \theta \eta^s\theta^{-1} = \sigma_1\cdots \sigma_{\ell} \sigma
    \end{equation*}
    for a decomposition $\sigma'=\prod_{i=1}^{\ell}\sigma_i$ as a product of transpositions. Because $\sigma'$ fixes one of the symbols moved by $\sigma$, no word of the form
    \begin{equation*}
      \sigma_{i_1} \cdots \sigma_{i_k}\sigma
      ,\quad
      1\le i_1<i_2<\cdots<i_k\le \ell
    \end{equation*}
    can be trivial, and hence \eqref{eq:trnsp.lipsch} and \eqref{eq:const.on.cosets} imply that $\tau_{\theta}=\tau_{\sigma\theta}$. This holds for \emph{arbitrary} $\theta\in S_n$ and transpositions $\sigma$, so we are done: $\tau_{\bullet}$ is constant.  \qedhere
  \end{enumerate}
\end{proof}

We also recast \cite[Lemma 2.4]{Semrl} in a form suitable for work with normal or unitary operators. We denote by $\bG=\bG V$ the \emph{Grassmannian} of the ambient space $V:=\bC^n$  of column vectors on which the algebra $M_n$ acts: the collection of all of its subspaces, topologized as
\begin{equation*}
  \bG V=\coprod_{0\le d\le n}\bG(d,V)
  ,\quad
  \bG(d,V):=\left\{\text{$d$-dimensional subspaces of $V$}\right\}
  \quad\text{\cite[Example 1.36]{lee_smth-mfld_2e_2013}}.
\end{equation*}
Furthermore, we equip the space $V=\C^n$ with its standard inner product. For a subspace $W \in \bG$ by $P_W$ we denote the orthogonal projection onto $W$. 
\begin{lemma}\label{le:def.latt.mor}
  Let $n\in \Z_{\ge 1}$ and $\phi: \U(n) \to \GL(n)$ a continuous commutativity- and spectrum-preserving map. The assignment $\Psi : \bG \to \bG$ given by 
  \begin{equation}\label{eq:w2tw}
   \Psi(W) := \ker(\lambda
    I_n-\phi(U)), \text{ for any } \lambda \in \bS^1 \text{ and }  U\in \U(n)\text{ with }\ker(\lambda I_n-U)=W,
  \end{equation}
  is a well-defined continuous map which preserves both dimensions and inclusion relations.
  
\end{lemma}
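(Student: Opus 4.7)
The plan is to leverage Proposition \ref{pr:alg.cpct.tor} applied to every maximal torus of $\U(n)$. For each subspace $W\le V$ and a fixed $\lambda_0\in \bS^1\setminus\{1\}$ introduce the model unitary $U_W:=\lambda_0 P_W+P_{W^\perp}$, whose $\lambda_0$-eigenspace is precisely $W$. For any maximal torus $\cT\le \U(n)$, conjugating $\cT$ to the standard diagonal torus by a suitable unitary and applying Proposition \ref{pr:alg.cpct.tor} yields $T_{\cT}\in \GL(n)$ with $\phi|_{\cT}=T_{\cT}(\cdot)T_{\cT}^{-1}$.

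\textbf{Well-definedness and dimension preservation.} Let $U\in \U(n)$ satisfy $\ker(\mu I_n-U)=W$ for some $\mu\in\bS^1$. Because $U$ preserves the orthogonal decomposition $V=W\oplus W^\perp$, there is an orthonormal basis adapted to this decomposition in which $U$ (and automatically $U_W$) is diagonal; the corresponding torus $\cT$ contains both. Then $\ker(\mu I_n-\phi(U))=\ker(\mu I_n-T_{\cT} U T_{\cT}^{-1})=T_{\cT} W$, a subspace of dimension $\dim W$. Independence from the choice of $\cT$ (hence from $U$ and $\mu$) follows by noting that any two such tori $\cT,\cT'$ both contain $U_W$: from $T_{\cT} U_W T_{\cT}^{-1}=\phi(U_W)=T_{\cT'}U_W T_{\cT'}^{-1}$ the element $T_{\cT'}^{-1}T_{\cT}$ commutes with $U_W$, so it preserves the eigenspace $W$, forcing $T_{\cT} W=T_{\cT'} W$. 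The edge cases $W=\{0\}$ (any $U$ with $\lambda\notin\spc(U)$ gives $\{0\}$) and $W=V$ (forcing $U=\lambda I_n$, with $\phi(\lambda I_n)=\lambda I_n$ by Proposition \ref{pr:alg.cpct.tor}) are immediate.

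\textbf{Inclusion preservation.} For $W_1\subseteq W_2$, pick an orthonormal basis whose first $\dim W_1$ vectors span $W_1$ and whose first $\dim W_2$ vectors span $W_2$. The associated diagonal torus $\cT$ contains both $U_{W_1}$ and $U_{W_2}$, so a single $T_{\cT}$ delivers $\Psi(W_i)=T_{\cT} W_i$, giving $\Psi(W_1)\subseteq\Psi(W_2)$.

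\textbf{Continuity and main difficulty.} On each $\bG(d,V)$ with $0<d<n$ the assignment $W\mapsto U_W$ is continuous since $W\mapsto P_W$ is, so $W\mapsto\phi(U_W)$ is continuous. By spectrum preservation $\spc(\phi(U_W))=\{\lambda_0,1\}$, and the previous paragraph yields $\dim\ker(\lambda_0 I_n-\phi(U_W))=d$ together with $\dim\ker(I_n-\phi(U_W))=n-d$; summing to $n$ forces $\phi(U_W)$ to be diagonalizable, so $\Psi(W)$ coincides with the range of the Riesz projection $\frac{1}{2\pi i}\oint_{|z-\lambda_0|=\varepsilon}(zI_n-\phi(U_W))^{-1}\,dz$ for sufficiently small $\varepsilon>0$. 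This projection depends continuously on $\phi(U_W)$, hence on $W$, and a continuous family of rank-$d$ projections induces a continuous map into $\bG(d,V)$; the components $\bG(0,V)$ and $\bG(n,V)$ are singletons. The principal obstacle lies already in well-definedness: the recipe for $\Psi(W)$ a priori depends on the choices of $U$ and $\mu$, and forcing torus-independence of $T_{\cT} W$ requires exploiting the commutant structure of the model unitary $U_W$.
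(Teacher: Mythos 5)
Your proposal is correct and follows essentially the same strategy as the paper's: introduce a model unitary $U_W$ whose $\lambda_0$-eigenspace is $W$ (the paper uses $U_W:=2P_W-I_n$), apply Proposition \ref{pr:alg.cpct.tor} to a maximal torus containing both $U$ and $U_W$ to obtain $\Psi(W)=T_{\G}W$, and establish continuity from the continuous dependence of eigenspaces (your Riesz-projection route and the paper's citation of \cite[Proposition 13.4]{salt_divalg} are interchangeable). The only difference is that the paper handles well-definedness slightly more directly, showing $\ker(\lambda I_n-\phi(U))=\ker(I_n-\phi(U_W))=T_{\G}W$ from a single torus $\G$ containing both $U$ and $U_W$, whereas you add an extra commutant argument ($T_{\cT'}^{-1}T_{\cT}$ commutes with $U_W$, hence fixes $W$) to establish torus-independence of $T_{\cT}W$ — valid, but unnecessary once the canonical comparator $U_W$ is in play.
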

\begin{proof}  
  Fix a subspace $W \le \C^n$ and consider the unitary operator
  \begin{equation*}
    U_W :=2P_W-I_n= \id_W \oplus \,(-\id_{W^\perp}) \in \U(n).
  \end{equation*}  
  Suppose that $U \in \U(n)$ and $\lambda\in \bS^1$ satisfy
  \begin{equation*}
    \ker(\lambda I_n - U) = W.
  \end{equation*}
  To show that $\Psi$ is well-defined, it suffices to show
  \begin{equation}\label{eq:well-defined}
      \ker(\lambda I_n - \phi(U)) = \ker(I_n - \phi(U_W)).
  \end{equation}
  Indeed, let $S \in \U(n)$ be a unitary matrix whose columns consist of the orthonormal eigenbasis for $U$. It easily follows that
  \begin{equation*}
    U, U_W \in S(\bS^1)^nS^*.
  \end{equation*}
  Proposition \ref{pr:alg.cpct.tor} applied to the maximal torus $\G:=S(\bS^1)^nS^*$ implies that there exists $T_{\G} \in \GL(n)$ such that
  \begin{equation*}
    \phi(X) = T_{\G} X T_{\G}^{-1}
    ,\quad
    \forall
    X\in \G.
  \end{equation*}
  In particular we have
  \begin{align*}
    \phi(U) = T_{\G} UT_{\G}^{-1} \implies \ker(\lambda I_n-\phi(U)) & = \ker (T_{\G}(\lambda I_n - U)T_{\G}^{-1}) = T_{\G}(\ker(\lambda I_n - U)) \\
    &= T_{\G} W.
  \end{align*}
  Exactly the same calculation gives $\ker(I_n-\phi(U_W)) = T_{\G} W$ which yields  \eqref{eq:well-defined}, so that
  \begin{equation}\label{eq:uulw}
    \Psi(W)=
    \ker(I_n-\phi(U_{W}))=T_{\G} W.
  \end{equation}

That $\Psi$ preserves dimensions follows from the map's expression \eqref{eq:uulw}, as does inclusion preservation: whenever $W\le W'$ there is a maximal torus $\G$ in $\U(n)$ containing both $U_W$ and $U_{W'}$, so that $\Psi(W)=T_{\G} W\le T_{\G} W'=\Psi(W')$.

  It remains to verify the continuity of $\Psi$. For a fixed $0 \leq d \leq n$ let $\cU_{d}\subset M_n$ be the collection of diagonalizable matrices $X \in M_n$ such that $\dim\ker(I_n - X) = d$. It is straightforward to check that the map
  \begin{equation*}
    \cU_{d} \ni X
    \xmapsto{\quad\Lambda\quad}
    \ker( I_n - X)
    \in \bG(d,V)
  \end{equation*}
  is continuous (e.g.\ \cite[Proposition 13.4]{salt_divalg}). As $U_{W} \in \cU_{d}$ varies continuously with $W \in \bG(d,V)$, it follows that $\Psi$ is continuous as a composition
  \begin{equation*}
    \bG(d,V) \ni W
    \xmapsto{\quad}
    \underbracket{U_{W}}_{\in \cU_{d}}
    \xmapsto{\quad}
    \underbracket{\phi(U_{W})}_{\in \cU_{d}}
    \xmapsto{\quad\Lambda\quad}
    \ker(I_n - \phi(U_{W}))=\Psi(W)\in  \bG(d,V)
  \end{equation*}
  of continuous maps.
\end{proof}

The map of Lemma \ref{le:def.latt.mor} is somewhat more than dimension-preserving: if $V$ and $W$ are (complex) vector spaces, recall \cite[Definition 2.1]{zbMATH01747827} that a map $\Psi:\bP V \to \bP W$ is a \emph{morphism between projective spaces} if $\Psi$ respects sums of lines in the following sense:
\begin{equation}\label{eq:is.p.mor}
  \ell \le \ell'\bigvee \ell''
  \xRightarrow{\quad}
  \Psi\ell\le \Psi\ell'\bigvee\Psi\ell''
  ,\quad
  \forall \text{ lines }\ell,\ \ell',\ \ell''\in \bP V,
\end{equation}
where `$\bigvee$' denotes the supremum operation (i.e.\ sum) in the \emph{subspace lattice} \cite[\S 1.2]{pank_wign} of $V$. To prove that $\Psi$ is a morphism in this sense (or rather restricts to one on $\bP V\subset \bG$) we need a version of the \emph{Fundamental Theorem of Projective Geometry} (appearing in various guises as \cite[Theorem 3.1]{zbMATH01747827}, \cite[Theorem 2.3]{pank_wign}, etc.). 

For subspaces $W,W'\le V\cong \bC^n$ write 
$W\bigobot W'$ to denote the fact that the orthogonal projections $P_W$ and $P_{W'}$ commute; this is equivalent to the orthogonal complements of $W\cap W'$ in $W$ and $W'$ being orthogonal, or to the fact that there is a maximal torus $\G\le \U(n)$ leaving both $W$ and $W'$ invariant. Note first:

\begin{lemma}\label{le:pres.ops.orth}
  The map $\Psi$ of Lemma \ref{le:def.latt.mor} preserves lattice operations for spaces in relation $\bigobot$.
\end{lemma}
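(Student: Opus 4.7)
My plan is to exploit the explicit formula $\Psi(W) = T_\G W$ recorded in \eqref{eq:uulw}, where $T_\G \in \GL(n)$ implements $\phi$ as conjugation on any maximal torus $\G \le \U(n)$ containing $U_W$ (furnished by Proposition \ref{pr:alg.cpct.tor}). The key observation is that for $W, W' \le \bC^n$ with $W \bigobot W'$, one can find a single maximal torus $\G \le \U(n)$ containing \emph{all four} reflections $U_W$, $U_{W'}$, $U_{W \cap W'}$ and $U_{W + W'}$. Indeed, the alternative characterization of $\bigobot$ recorded just before the statement of the lemma furnishes a maximal torus preserving both $W$ and $W'$; in a common orthonormal eigenbasis for $\G$, both subspaces are spanned by eigenlines, so $W \cap W'$ and $W + W'$ inherit that property, placing their associated reflections in $\G$ as well.

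Having secured such a torus $\G$, I will apply Proposition \ref{pr:alg.cpct.tor} to $\phi|_{\G}$ to produce $T_\G \in \GL(n)$ with $\phi(X) = T_\G X T_\G^{-1}$ for all $X \in \G$. Combined with \eqref{eq:uulw} this yields
\begin{equation*}
  \Psi(U) = T_\G U, \qquad U \in \{W,\ W',\ W \cap W',\ W + W'\}.
\end{equation*}
Since $T_\G$ is a linear isomorphism it commutes with intersections and sums of subspaces, so
\begin{equation*}
  \Psi(W \cap W') = T_\G(W \cap W') = T_\G W \cap T_\G W' = \Psi(W) \cap \Psi(W'),
\end{equation*}
and the analogous identity holds with $W + W'$ in place of $W \cap W'$. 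This is precisely the preservation of the lattice operations $\wedge$ and $\vee$ on the $\bigobot$-related pair $(W, W')$.

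The only step that demands genuine care is the first: ensuring that $U_{W \cap W'}$ and $U_{W + W'}$ lie in the \emph{same} torus as $U_W$ and $U_{W'}$. Once the characterization of $\bigobot$ via a common torus leaving $W$ and $W'$ invariant is in hand, this reduces to the trivial observation that spans of eigenlines are closed under intersection and sum, and everything after that is a direct consequence of Proposition \ref{pr:alg.cpct.tor} and elementary linear algebra.
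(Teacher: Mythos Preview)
Your proof is correct and follows essentially the same approach as the paper's own argument: exploit the common maximal torus $\G$ furnished by the definition of $\bigobot$, apply Proposition~\ref{pr:alg.cpct.tor} to obtain a single $T_\G$, and deduce lattice-operation preservation from the linearity of $T_\G$. Your version is more explicit than the paper's (which writes out only the sum case in one line), in particular making clear why all four reflections $U_W$, $U_{W'}$, $U_{W\cap W'}$, $U_{W+W'}$ lie in the same torus.
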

\begin{proof}
  This is precisely for the reason just pointed out in defining $\bigobot$, i.e.\ the existence of $\G$: for $T_G$ attached to $\G$ as in \eqref{eq:uulw} we have
  \begin{equation*}    
    \Psi(W+W')=T(W+W')=TW+TW'=\Psi(W)+\Psi(W'). 
  \end{equation*}
\end{proof}

The following result will thus apply to the map $\Psi$ constructed in Lemma \ref{le:def.latt.mor}. 

\begin{proposition}\label{pr:pres.sup}
  Let $V$ be an $n$-dimensional complex Hilbert space for $n \in \bZ_{\ge 3}$, $\bG:=\bG(V)$ its Grassmannian. Any continuous self-map  $\Psi: \bG \to \bG$  that preserves dimensions and lattice operations for spaces in relation $\bigobot$ is induced by an invertible linear or conjugate-linear operator $T$ on $V$, in the sense that
  \begin{equation*}
    \forall W\in \bG
    \quad:\quad
    \Psi(W)=TW. 
  \end{equation*}
\end{proposition}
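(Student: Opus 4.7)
The plan is to restrict $\Psi$ to the projective space $\bP V$, verify it is a morphism of projective spaces in the sense of \eqref{eq:is.p.mor}, apply the Fundamental Theorem of Projective Geometry to recover a semilinear bijection $T$ inducing it, pin down the field automorphism via continuity, and finally bootstrap the identity $\Psi(W)=TW$ from lines to all subspaces using orthogonal decompositions.

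First I would observe that $\Psi$ is order-preserving: if $W\le W'$ then $P_W P_{W'}=P_{W'}P_W=P_W$, so $W\bigobot W'$, and the intersection-preservation hypothesis yields
\[
  \Psi W=\Psi(W\cap W')=\Psi W\cap \Psi W'\le \Psi W'.
\]
Combined with dimension preservation this means that for every plane $W\le V$, $\Psi W$ is a plane and $\Psi$ sends the pencil $\bP W$ of lines in $W$ into the pencil $\bP(\Psi W)$ of lines in $\Psi W$. This is exactly the morphism condition \eqref{eq:is.p.mor}. The morphism is non-degenerate: for any orthonormal basis of $V$ the associated pairwise orthogonal lines $\ell_1,\ldots,\ell_n$ satisfy
\[
  \Psi\ell_1+\cdots+\Psi\ell_n=\Psi V=V
\]
by iterated $\bigobot$-sum preservation together with dimension preservation, so the image of $\Psi|_{\bP V}$ spans $V$ and in particular is not contained in any proper projective subspace.

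Since $n\ge 3$, the Fundamental Theorem of Projective Geometry in the form of \cite[Theorem 3.1]{zbMATH01747827} (or \cite[Theorem 2.3]{pank_wign}) now produces a $\sigma$-semilinear injection $T\colon V\to V$, with respect to some field endomorphism $\sigma\colon \bC\to\bC$, such that $\Psi(\bC v)=\bC Tv$ for every nonzero $v\in V$. Injectivity together with finite-dimensionality make $T$ bijective. Continuity of $\Psi$ forces $\sigma$ to be continuous: for linearly independent $v_1,v_2$, the line $\bC(v_1+av_2)$ depends continuously on $a\in\bC$, while its image $\bC(Tv_1+\sigma(a)Tv_2)$ depends continuously on $\sigma(a)$. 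The only continuous field automorphisms of $\bC$ are the identity and complex conjugation, so $T$ is either $\bC$-linear or conjugate-linear.

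To extend the identification to all of $\bG$, given a $k$-dimensional $W$ I would pick an orthonormal basis and write $W=\ell_1\oplus\cdots\oplus\ell_k$ for pairwise orthogonal lines $\ell_i$; iterated $\bigobot$-sum preservation together with the (possibly conjugate-)linearity of $T$ then yields
\[
  \Psi W=\Psi\ell_1+\cdots+\Psi\ell_k=T\ell_1+\cdots+T\ell_k=T(\ell_1+\cdots+\ell_k)=TW.
\]
The main obstacle I anticipate is cleanly verifying the morphism property and non-degeneracy, so that the appropriate FTPG applies without further hypotheses; dimension- and order-preservation together reduce both of these to essentially book-keeping, and the rest is done by the cited theorem.
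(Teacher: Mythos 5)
Your Stage~I argument (applying FTPG, forcing the field endomorphism to be $\id$ or conjugation via continuity, and extending from lines to all subspaces by orthogonal decompositions) matches the paper's proof exactly. The gap is in verifying that $\Psi|_{\bP V}$ satisfies the morphism condition \eqref{eq:is.p.mor}: you assert this follows merely from order- and dimension-preservation, claiming that ``$\Psi$ sends the pencil $\bP W$ into the pencil $\bP(\Psi W)$ is exactly the morphism condition.'' It is not. Condition \eqref{eq:is.p.mor} asks for $\Psi\ell \le \Psi\ell' \vee \Psi\ell''$, whereas order- and dimension-preservation only give $\Psi\ell \le \Psi(\ell' \vee \ell'')$. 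These coincide precisely when $\Psi\ell' \neq \Psi\ell''$, so that the two image lines span the image plane; when $\Psi\ell' = \Psi\ell''$, the right-hand side of \eqref{eq:is.p.mor} degenerates to a single line and the implication is no longer automatic.

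In other words, you have not shown $\Psi$ is injective on $\bP V$ (or handled the degenerate case of \eqref{eq:is.p.mor} when distinct lines collapse). The $\bigobot$-hypothesis gives this for free only for \emph{orthogonal} line pairs (since $\Psi\ell \cap \Psi\ell' = \Psi(\ell\cap\ell')=\Psi(0)=0$), but not for general distinct lines; and a continuous non-constant self-map of a projective line can certainly collapse two points. This is exactly where the paper invests the bulk of the effort: it reduces to $n=3$, and then runs a topological argument showing that the fiber $\Psi^{-1}(\Psi(\ell))\cap\bP\pi$ is both open and closed in $\bP\pi$ (using the orthogonal complement $\pi^\perp$ of the plane $\pi=\ell\vee\ell'$ and a sequence of $\bigobot$-related comparisons), concluding $\Psi$ would have to be constant on $\bP\pi$, a contradiction with orthogonal lines having distinct images. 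Without such an argument — or an alternative route to injectivity on $\bP V$ — the appeal to the Fundamental Theorem of Projective Geometry is not justified.
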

\begin{proof}
  There are several stages to the proof. First of all, as $\Psi$ preserves dimensions, it restricts to a self-map $\Psi|_{\bP V}:\bP V \to \bP V$. 

  \begin{enumerate}[(I),wide]
  \item \textbf{Conclusion, assuming $\Psi|_{\bP V}$ is a projective space endomorphism.} Suppose, in other words, we have proven \eqref{eq:is.p.mor}. The image $\Psi(\bP V)$ of the projective space $\bP V\subset \bG$ cannot be contained in a line of $\bP V$, for by assumption the restriction $\Psi|_{\bP V}$ sends lines through the elements of an orthogonal basis to distinct $n=\dim V\ge 3$ (even linearly independent) lines. As we are assuming (in the present portion of the proof) that $\Psi|_{\bP V} : \bP V \to \bP V$ is an endomorphism of $\bP V$, it follows from \cite[Theorem 3.1]{zbMATH01747827} that $\Psi|_{\bP V}$ is implemented by a \emph{semilinear} endomorphism $T: V \to V$:
    \begin{equation*}
      \Psi(\ell) =\spn_{\bC}T\ell, \quad \forall \ell \in  \bP V,
    \end{equation*}  
    where $T$ is an additive map for which there exists a ring endomorphism $\alpha\in \End(\bC)$ such that
    \begin{equation*}
      T(\mu v)=\alpha(\mu)Tv,\quad \forall \mu \in \bC,\ v\in V.
    \end{equation*}
    An arbitrary subspace $W \in \bG(d,V)\subset \bG$ ($1 \leq d \leq n$) can be decomposed as $W=\ell_1 \oplus \cdots \oplus \ell_d$ for some mutually orthogonal lines $\ell_j \in \bP V$, so the assumed compatibility with lattice operations under $\bigobot$ yields 
    \begin{equation}\label{eq:Psi(W)}
      \Psi(W) = \Psi(\ell_1) + \cdots +\Psi(\ell_d)= \spn_{\bC} TW.
    \end{equation}    
    Obviously $T$ is injective, as $\Psi(\bP V)\subseteq \bP V$. As (by assumption) $\Psi$ is continuous, so is   $\alpha$. Indeed (as argued in \cite[\S 3, p.133]{Semrl}), consider orthogonal non-zero vectors $v,w\in V$. Lattice compatibility under $\bigobot$ then implies that the lines
    \begin{equation*}
      \bC Tv = \Psi(\bC v)
      \quad\text{and}\quad
      \bC Tw = \Psi(\bC w)
    \end{equation*}
    are distinct. In particular, $Tv$ and $Tw$ are linearly independent. The map
    \begin{equation*}
      \bC\ni \mu
      \xmapsto{\quad}
      \Psi(\bC(v+\mu w))
      =
      \bC\left(Tv+\alpha(\mu)Tw\right)
      \in \bP V
    \end{equation*}
    is continuous, and with it $\alpha$ by the noted linear independence of $Tv$ and $Tw$. That continuity forces $\alpha\in \left\{\id,\ \overline{\bullet}\right\}$, so that by \eqref{eq:Psi(W)} we have 
    \begin{equation*}
     \forall W\in \bG
        \quad:\quad \Psi(W) = TW,
    \end{equation*}
    for $T$ either in $\GL(V)$ or invertible \emph{conjugate}-linear.
    
  \item \textbf{$\Psi|_{\bP V}$ is a projective space endomorphism.} The dimension preservation of $\Psi$ already constrains the nature of possible counterexamples: the assumed compatibility with $\bigobot$ implies that $\Psi$ preserves inclusion, so for lines $\ell$, $\ell'$ and $\ell''$ with $\ell\le \ell'\bigvee\ell''$ we have
    \begin{equation*}
      \text{lines }\Psi(\ell),\ \Psi(\ell'),\ \Psi(\ell'')\le \text{2-plane }\Psi\left(\ell\bigvee\ell'\right).
    \end{equation*}
    All is thus well provided $\Psi$ is injective on $\bP V$ (for in that case, if $\ell'\ne \ell''$, the distinct lines $\Psi(\ell')$ and $\Psi(\ell'')$ would have to span the right-hand 2-plane in the above equation). We can always reduce the problem to the case $n=3$, as we will: if $\Psi(\ell)=\Psi(\ell')$ for distinct lines then consider a 3-dimensional subspace $V'\le V$ containing $\ell'\bigvee\ell''$, and substitute
    \begin{itemize}[wide]
    \item $V'$ for $V$;
    \item and $R\circ\Psi$ for $\Psi$, where $R\in \GL(V)$ sends $\Psi(V')$ back to $V'$ and we identify $R$ with the operator $W\mapsto RW$ it induces on $\bG$.
    \end{itemize}
    
    Suppose, then, that $\Psi(\ell)=\Psi(\ell')$ for $\ell\ne \ell'\in \bP V$ and $\dim V=3$. We claim that in this case $\Psi$ is constant on all of $\bP \pi$, where $\pi:=\ell\bigvee \ell'$, contradicting the fact that by assumption $\Psi$ sends orthogonal lines to linearly independent lines. The projective line $\bP\pi$ being connected, it will be enough to argue that
    \begin{equation}\label{eq:psipsil}
      \Psi^{-1}\left(\Psi(\ell)\right)\cap \bP\pi
      =
      \left\{\ell''\le \pi\ :\ \Psi(\ell'')=\Psi(\ell)=\Psi(\ell')\right\}
      \subseteq
      \bP\pi
    \end{equation}
    is open (for it is already closed by the continuity of $\Psi$, and non-empty because it contains $\ell$ and $\ell'$ themselves). Without loss of generality, this amounts to showing that along with $\ell'$ the set in question contains an entire neighborhood thereof in $\bP\pi$. Write
    \begin{equation*}
      \pi':=\spn\left\{\ell',\ \pi^{\perp}\right\},
    \end{equation*}
    so that $\pi\bigobot \pi'$. We proceed in stages. 
    
    \begin{enumerate}[(a),wide]
    \item \textbf{: The line $\ell'$ belongs to the interior of $\Psi^{-1}(\Psi(\ell))\cap \bP \pi'$ regarded as a subspace of $\bP \pi'$.} Suppose not. There is then a sequence
      \begin{equation*}
        \bP \pi'\setminus \Psi^{-1}(\Psi(\ell))
        \ni
        \ell_k
        \xrightarrow[\quad k\quad]{\quad\text{convergence in $\bG$}\quad}
        \ell'.
      \end{equation*}
      because by assumption $\Psi(\ell_k)\ne \Psi(\ell')=\Psi(\ell)$ for all $k$, we have (as observed earlier in the proof)
      \begin{equation*}
        \forall k\quad:\quad
        \Psi\left(\ell_k\bigvee \ell'\right)
        =
        \Psi(\ell_k)\bigvee \Psi(\ell')
        \xlongequal[]{\quad\Psi(\ell')=\Psi(\ell)\quad}
        \Psi(\ell_k)\bigvee \Psi(\ell)
        =
        \Psi\left(\ell_k\bigvee \ell\right).
      \end{equation*}
      Now, $\ell_k\bigvee\ell'=\pi'$ for all $k$ on the one hand and $\ell_k\bigvee\ell\xrightarrow[k]{}\pi$ on the other, so  $\Psi(\pi)=\Psi(\pi')$. On the other hand
      \begin{equation*}
        \pi\bigobot \pi'
        \xRightarrow{\quad}
        \Psi\left(\pi\bigvee\pi'\right)
        =
        \Psi(\pi) \bigvee \Psi(\pi'),
      \end{equation*}
      and we have a contradiction. 
      
    \item \textbf{: The line $\ell'$ belongs to the interior of \eqref{eq:psipsil}.} Simply apply the preceding step with altered parameters: $\ell'$ is as before, but substitute any
      \begin{equation*}
        \ell'' \in \Psi^{-1}(\Psi(\ell))\cap \bP \pi'\setminus \{\ell'\}
      \end{equation*}
      for $\ell$ and interchange the roles of $\pi$ and $\pi'$.  \qedhere
    \end{enumerate}
    
  \end{enumerate}
\end{proof}

\setcounter{case}{0}

\begin{proof}[Proof of Theorem \ref{th:main-result-gln}]
 First of all, by Corollary \ref{cor:sp.shrk.conn.conf.sp}, $\phi$ preserves spectra.

  \begin{enumerate}[(I),wide]
  \item\label{item:th:main-result-gln:gl}  \textbf{: $\ca{X}_n = \mathrm{GL}(n)$.}
    By the continuity of the spectral radius $\rho(\cdot)$ on $M_n$ (see e.g.\ \cite[Theorem 3]{Newburgh}), the map
    \begin{equation*}
      F : M_n \to \mathrm{GL}(n), \qquad F(X):= X + (1+\rho(X))I_n
    \end{equation*}
    is well-defined and continuous. Consider the map \begin{equation*}\psi : M_n \to M_n, \qquad \psi(X) := \phi(F(X)) - (1 + \rho(X))I_n.\end{equation*} Clearly, $\psi$ is well-defined, continuous, commutativity and spectrum preserver. Therefore, Theorem \ref{thm:Semrl} implies that there exists $T \in \mathrm{GL}(n)$ such that $\psi$ is of the form \eqref{eq:inner}. By passing to the map $T^{-1}\phi(\cdot)T$ or $(T^{-1}\phi(\cdot)T)^t$, without loss of generality, we can assume that $\psi$ is the identity map on $M_n$. In particular, we have
    \begin{equation}\label{eq:F(X)}
      \phi(F(X))=F(X), \quad \forall X \in M_n.
    \end{equation}

    By Proposition \ref{pr:alg.cpct.tor}, $\phi$ operates as conjugation by an invertible matrix on every maximal algebraic torus of $\GL(n)$ (i.e.\ conjugate of the subgroup $\D(n)\le \GL(n)$ of diagonal matrices):
    \begin{equation*}
      \left(\forall S\in \GL(n)\right)
      \left(\exists R\in \GL(n)\right)
      \left(\forall D\in \D(n)\right)
      \quad:\quad
      \phi(SDS^{-1})
      =
      RSDS^{-1}R^{-1}.
    \end{equation*}
    The consequence \cite[Corollary 2.2]{Semrl} follows:
    \begin{equation}\label{eq:intertw.poly}
      \left(\forall X\in \GL(n)\right)
      \left(\forall\text{ polynomials }p\ :\ p(X)\in \GL(n)\right)
      \quad:\quad
      \phi(p(X))=p(\phi(X)).
    \end{equation}
    Now, for a fixed $X\in \GL(n)$ we have $F(X)=p_X(X)$ for the polynomial
    \begin{equation*}
      p_X(\lambda):=\lambda+(1+\rho(X)),
    \end{equation*}
    and hence (as $\rho(\phi(X))=\rho(X)$)
    \begin{align*}
      \phi(X)+(1+\rho(X))I_n
      &=
        p_X(\phi(X))
        \overset{\text{\eqref{eq:intertw.poly}}}{=}
        \phi(p_X(X))
        =
        \phi(F(X))
        \overset{\text{\eqref{eq:F(X)}}}{=}
        F(X)\\
      &=
        X+(1+\rho(X))I_n.
    \end{align*}
    We are now done: $\phi(X)=X$, as desired.

  \item\label{item:th:main-result-gln:sl} \textbf{: $\ca{X}_n = \mathrm{SL}(n)$.} Let 
    \begin{equation*}
      \sqrt[n]{\cdot}: \C\setminus \{x \in \R \ : \ x \le 0\} \to \C\setminus \{x \in \R \ : \ x \le 0\}, \qquad \sqrt[n]{z}=\sqrt[n]{|z|}e^{\frac{i \Arg z}{n}}
    \end{equation*}
    be the principal branch of the complex $n$-th root (which is a continuous map). Denote
    \begin{equation*}
      \mathrm{GL}(n)_*:= \{X \in \mathrm{GL}(n) \ : \ \det X \ne -1\}.
    \end{equation*}
    The map
    \begin{equation}\label{eq:psi.conj.root.det}
      \zeta : \mathrm{GL}(n)_* \to \mathrm{GL}(n)_*, \qquad \zeta(X) = \sqrt[n]{\det X}\cdot \phi\left(\frac1{\sqrt[n]{\det X}} X\right)
    \end{equation}
    is a continuous, spectrum- and commutativity-preserving extension of $\phi$ to $\GL(n)_* \supseteq \SL(n)$. Step \ref{item:th:main-result-gln:gl} goes through essentially without change for $\mathrm{GL}(n)_*$: Corollary \ref{cor:sp.shrk.conn.conf.sp} applies in the variant of its case \ref{item:cor:sp.shrk.conn.conf.sp:gl} with
    \begin{equation*}
      L:=\left\{(\lambda_1, \ldots, \lambda_n)\in (\bC^{\times})^n\ :\ \prod_{j=1}^n \lambda_j\ne -1\right\};
    \end{equation*}
    the complement $L\setminus \Delta_L$ is connected, as in the earlier proof. We conclude that there exists $T \in \mathrm{GL}(n)$ and $\circ \in \{\id, (\cdot)^t\}$ such that
    \begin{equation*}
      \zeta(X) = TX^{\circ}T^{-1}, \quad \forall X \in \mathrm{GL}(n)_*
    \end{equation*}
    and consequently 
    \begin{equation*}
      \phi(X) = TX^\circ T^{-1}, \quad \forall X \in \mathrm{SL}(n).
    \end{equation*}
    
  \item\label{item:th:main-result-gln:u} \textbf{: $\ca{X}_n = \mathrm{U}(n)$.} Denote by $V\cong \bC^n$ the ambient space on which all operators act. The map $\Psi$ constructed in Lemma \ref{le:def.latt.mor} meets the requirements of Proposition \ref{pr:pres.sup} by Lemma \ref{le:pres.ops.orth}, so that there exists an invertible $T: V \to V$, either linear or conjugate-linear, such that
    \begin{align}\label{eq:psiw}
      \forall W\in\bG
      \quad:\quad  
      & \Psi(W) =\ker(\lambda I_n -\phi(U)) =  T(\ker(\lambda I_n -U)) \\ & \text{ for any } \lambda \in \bS^1 \text{ and } U \in \U(n) \text{ with }  W=\ker(\lambda I_n-U) \nonumber
    \end{align}
  This means the following.

    \begin{itemize}[wide]
    \item When $\alpha=\id$, we have the first option in \eqref{eq:inner}:
      \begin{equation*}
        \phi(U)=TUT^{-1}
        ,\quad
        \forall U\in \U(n).
      \end{equation*}
      Indeed,  for an arbitrary $U \in \U(n)$,  Proposition \ref{pr:alg.cpct.tor} and \eqref{eq:psiw}  say that $\phi(U)$ is the diagonalizable operator with the same spectrum as $U$ and whose $\lambda$-eigenspace (for any $\lambda\in \bS^1$) is the image through $T$ of the $\lambda$-eigenspace of $U$. In other words, $\phi(U)$ is precisely $TUT^{-1}$. 

    \item When $\alpha$ is conjugation, the analogous argument proves that 
      \begin{equation*}
        \phi(U)=TU^{-1}T^{-1}
        ,\quad
        \forall U\in \U(n)
      \end{equation*}
      instead: once more $\phi(U)$ is the diagonalizable operator whose $\lambda$-eigenspaces are the respective images through $T$ of those of $U$, while conjugation by $T$ turns $\lambda$-eigenspaces into $\overline{\lambda}$-eigenspaces respectively (for $\lambda\in \bS^1$ so that $\overline{\lambda}=\lambda^{-1}$) by anti-linearity. 
      
      This second variant corresponds to the right-hand branch of \eqref{eq:inner}: the conjugate-linear involution $e_i\xmapsto{J}e_i$ attached to the orthonormal basis we have already fixed implicitly in treating operators as matrices has the property that 
      \begin{equation*}
        \forall X\in M_n
        \quad:\quad
        JXJ^{-1}=\overline{X}
      \end{equation*}
      (where for $X=(x_{ij})$, $\overline{X}=(\overline{x_{ij}})$) and hence
      \begin{equation*}
        \phi(U) = TU^{-1}T^{-1} = (TJ)U^t(TJ)^{-1}
      \end{equation*}
      because $U$ is unitary (so that $JU^tJ^{-1}=\overline{U^t}=U^{-1}$). 

    \end{itemize}

  \item\label{item:th:main-result-gln:n} \textbf{: $\ca{X}_n = N_n$.} The conclusion certainly holds for the restriction $\phi|_{\U(n)}$ by the preceding step of the proof, so we can assume $\phi|_{\U(n)}=\id$. Proposition \ref{pr:alg.cpct.tor} shows that $\phi$ acts as a conjugation on every maximal abelian subalgebra of $N_n$, i.e.\ conjugate of the space of diagonal matrices. Commuting normal matrices are simultaneously unitarily diagonalizable \cite[Theorem 2.5.5]{hj_mtrx}, so $\phi$ restricted to any commuting family is additive. An arbitrary normal matrix is a linear combination of commuting unitary matrices (obvious for diagonal matrices, which suffices), hence the conclusion:
  \begin{equation*}
    \phi|_{\U(n)}=\id|_{\U(n)}
    \xRightarrow{\quad}
    \phi=\id_{N_n}. 
  \end{equation*}

  \end{enumerate}
  This concludes the proof of the theorem. 
\end{proof}

\begin{remark}\label{re:diff.fund.thm}
  The proof of \cite[Theorem 1.1]{Semrl} in \cite[\S 3]{Semrl} also uses the \cite[Theorem 3.1]{zbMATH01747827}, as step \ref{item:th:main-result-gln:u} of the above proof does. The upshot, however, is qualitatively different:

  \begin{itemize}[wide]
  \item In \ref{item:th:main-result-gln:u} above, the two possibilities for the endomorphism $\alpha$ (identity and conjugation) precisely correspond to the two types of map listed in \eqref{eq:inner}. 

  \item The proof of \cite[\S 3]{Semrl}, on the other hand, branches into the two options of \eqref{eq:inner} earlier; the branch $\phi=\Ad_T$ then rules out $\alpha=$ conjugation after applying \cite[Theorem 3.1]{zbMATH01747827}.
  \end{itemize}
\end{remark}

There is also a variant of Theorem \ref{th:main-result-gln} applicable to semisimple matrices, relying on the unitary/normal branch of the earlier result. Recall that an `ss' subscript denotes the collection of semisimple matrices in whatever space the subscript adorns. Additionally, a `$\ge 0$' subscript denotes the subspace of \emph{positive} \cite[Definition I.2.6.7]{blk} matrices (i.e.\ what \cite[Definition 4.1.9]{hj_mtrx} calls \emph{positive semi-definite}) of an ambient matrix set. 

\begin{theorem}\label{th:ss}
  Let $\ca{X}_n\in \{\GL(n)_{ss}, \SL(n)_{ss}\}$, $n \geq 3$. The continuous commutativity-preserving and spectrum-shrinking maps $\phi : \ca{X}_n \to M_n$ are precisely those of the form \eqref{eq:inner}.
\end{theorem}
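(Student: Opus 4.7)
The plan is to dispatch the two cases of $\ca{X}_n$ separately, reducing the $\SL(n)_{ss}$ case to the $\GL(n)_{ss}$ case via the extension trick of case \ref{item:th:main-result-gln:sl}: the map
\begin{equation*}
  \zeta(X) := \sqrt[n]{\det X}\cdot \phi\left(X/\sqrt[n]{\det X}\right)
\end{equation*}
furnishes a continuous, commutativity- and spectrum-preserving extension on $\GL(n)_*\cap\GL(n)_{ss}$, and the $\GL(n)_{ss}$-argument applies essentially without change on that subdomain (just as the $\GL(n)$-argument of \ref{item:th:main-result-gln:gl} was adapted to $\GL(n)_*$ in \ref{item:th:main-result-gln:sl}), with the conclusion for $\phi$ recovered upon restriction to $\SL(n)_{ss}$.

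For the principal case $\ca{X}_n = \GL(n)_{ss}$, Corollary \ref{cor:sp.shrk.conn.conf.sp}\ref{item:cor:sp.shrk.conn.conf.sp:gl} directly yields spectrum preservation. Next, I would restrict $\phi$ to $\U(n)\subseteq \GL(n)_{ss}$ and invoke case \ref{item:th:main-result-gln:u} of Theorem \ref{th:main-result-gln} to produce $T\in\GL(n)$ with $\phi|_{\U(n)}\in\{\Ad_T,\ \Ad_T\circ(\cdot)^t\}$. Replacing $\phi$ by $\Ad_{T^{-1}}\circ\phi$ (and, in the twisted branch, additionally composing with $(\cdot)^t$ globally) reduces us to the standing assumption $\phi|_{\U(n)}=\id$.

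With this normalization the argument of case \ref{item:th:main-result-gln:n} applies essentially verbatim to give $\phi|_{N_n\cap\GL(n)}=\id$. Indeed, for any unitary-type maximal torus $\mathcal{T}=S\D(n)S^{-1}$ with $S\in\U(n)$, Proposition \ref{pr:alg.cpct.tor} (after conjugating by $S$) writes $\phi|_{\mathcal{T}}=\Ad_{R_{\mathcal{T}}}$ for some $R_{\mathcal{T}}\in\GL(n)$; because $\mathcal{T}$ contains the non-scalar unitaries $S(\bS^1)^nS^{-1}$ on which $\phi$ is already the identity, $R_{\mathcal{T}}$ must centralize these, hence lie in the self-centralizing $\mathcal{T}$, so $\phi|_{\mathcal{T}}=\id$. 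Every normal invertible matrix sits inside such a unitary-type torus, settling this intermediate target.

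The hard part---and the main obstacle---is propagating $\phi=\id$ from $N_n\cap\GL(n)$ to the remaining non-normal elements of $\GL(n)_{ss}$. Given such $X$, Proposition \ref{pr:alg.cpct.tor} still supplies $\phi|_{\mathcal{T}_X}=\Ad_{R_X}$ on the unique maximal torus $\mathcal{T}_X$ containing $X$, so the goal becomes to show $R_X\in \mathcal{T}_X$. My plan here is to combine the polynomial-intertwining identity $\phi(p(X))=p(\phi(X))$ of \cite[Corollary 2.2]{Semrl} with a continuous deformation into the normal regime: apply polar decomposition $V=SU$ to a matrix $V$ of eigenvectors of $X$ to write $X=SNS^{-1}$ with $S\in\GL(n)_{\ge 0}$ positive definite and $N=UDU^{-1}\in N_n\cap\GL(n)$ normal, then interpolate via $X_t:=S^tNS^{-t}$ entirely within $\GL(n)_{ss}$ from $X_0=N$ (where $\phi=\id$ is already known) to $X_1=X$. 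Tracking the image $\phi(X_t)$ along this path---constrained at each $t$ by continuity, spectrum and commutativity preservation, and the torus-conjugation structure---should force $\phi(X_t)=X_t$ throughout. This is precisely the regime in which the discontinuous ``fake candidate'' $SNS^{-1}\mapsto S^{-1}NS$ alluded to in the introduction (whose consistency relies on the Putnam--Fuglede theorem) would derail the argument if continuity were dropped, so the decisive ingredient at this last step is the continuity hypothesis on $\phi$.
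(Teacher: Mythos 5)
Your opening moves match the paper's: reduce $\SL(n)_{ss}$ to $\GL(n)_{ss}$ via the $\sqrt[n]{\det}$ extension, use Corollary~\ref{cor:sp.shrk.conn.conf.sp} for spectrum preservation, normalize $\phi|_{\U(n)}=\id$ via the $\U(n)$-branch of Theorem~\ref{th:main-result-gln}, and then deduce $\phi|_{N_n\cap\GL(n)}=\id$ from the unitary-type-torus argument. Those steps are sound.

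The gap is in your final paragraph. You propose to propagate $\phi=\id$ to non-normal $X=SNS^{-1}$ by tracking $\phi$ along the path $X_t:=S^tNS^{-t}$ and appealing to continuity. This cannot rule out the candidate $\Theta:SNS^{-1}\mapsto S^{-1}NS$, because $\Theta$ \emph{is} continuous along any such path: $\Theta(X_t)=S^{-t}NS^t$ varies continuously in $t$, and both branches agree at $t=0$. More broadly, $\Theta$ restricted to a single conjugacy orbit $\Ad_{\GL(n)}N$ with $N$ of simple spectrum is continuous (it coincides with the conjugation-by-$S^{-2}$ formula, an honest continuous map there); the discontinuity of $\Theta$, per Proposition~\ref{pr:when.fn.calc.cont.op} and \cite[Theorem 1.2(A)]{zbMATH06285212}, only manifests as the spectrum degenerates, i.e.\ when letting distinct eigenvalues collide. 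Your deformation keeps the spectrum frozen, so continuity along it is blind to the phenomenon you need to detect.

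What is actually missing is the structural dichotomy established in Proposition~\ref{pr:is.or.s2}: under $\phi|_{\U(n)}=\id$, for each positive $S$ the restriction $\phi|_{\Ad_S\U(n)}$ equals $\Ad_{S^p}$ with $p\in\{0,-2\}$, and the same $p$ works for \emph{all} $S$. This is proved by a bicommutant argument identifying $T_S$ as a polynomial in $S$, a careful analysis with pairs of commuting involutions producing the constraint $(a\lambda_j^2+b)(c\lambda_j^2+d)=\lambda_j^2$ for all $j$ (using $n\ge 3$), and a connectedness argument to pin down a single $p$. Only after this reduction to $\phi\in\{\id,\Theta\}$ does the (global, spectral-degeneration-based) discontinuity of $\Theta$ finish the job. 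Your appeal to the polynomial-intertwining identity \cite[Corollary 2.2]{Semrl} does not substitute for this: that identity is satisfied by $\Theta$ as well, since $\Theta$ is conjugation on every maximal torus and commutes with polynomial functional calculus.
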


Taking this for granted for the moment, we return to the announced alternative approach it affords to the $\GL$ and $\SL$ cases of Theorem \ref{th:main-result-gln} (not employing \cite[Theorem~1.1]{Semrl}, and hence recovering the latter as a byproduct).

\begin{proof}[Alternative proof of Theorem \ref{th:main-result-gln}: $\GL(n)$ and $\SL(n)$]

  Simply observe that the $\phi$ in question, defined on $\cX_n$, restrict to such on $\cX_{n,ss}$. The conclusion follows from the density of $\cX_{n,ss}\subseteq \cX_{n}$ for $\cX_n\in \left\{\GL(n),\ \SL(n)\right\}$. 
\end{proof}

We now settle into proving Theorem~\ref{th:ss}. A preliminary result will reduce the pool of maps to consider.

\begin{proposition}\label{pr:still.theta}
  Let $\ca{X}_n\in \{\GL(n)_{ss}, \SL(n)_{ss}\}$, $n \geq 3$. The continuous commutativity-preserving and spectrum-shrinking maps $\phi : \ca{X}_n \to M_n$ are expressible as compositions of the form
  \begin{equation}\label{eq:comp3maps}
    (\cdot)^t\circ \Ad_T\circ \Theta
    ,\quad T\in \GL(n),
  \end{equation}
  where 
  \begin{itemize}[wide]
  \item $\Theta$ is the involutory self-map of $\ca{X}_n$ defined by 
    \begin{equation}\label{eq:sns}
      SNS^{-1}
      \xmapsto{\quad\Theta\quad}
      S^{-1}NS
      ,\quad
      \forall
      N\in N_n\cap \ca{X}_n
      \text{ and }
      S\in \GL(n)_{\ge 0};
    \end{equation}
  \item and all of the components of \eqref{eq:comp3maps} are optional (i.e.\ any might be absent).
  \end{itemize}
\end{proposition}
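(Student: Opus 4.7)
The plan is a four-step argument leveraging Theorem~\ref{th:main-result-gln}, Proposition~\ref{pr:alg.cpct.tor}, and crucially the Putnam--Fuglede theorem.

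First, I apply Theorem~\ref{th:main-result-gln} in its $\U(n)$ case to $\phi|_{\U(n)}$ (valid directly for $\ca{X}_n = \GL(n)_{ss}$; for $\ca{X}_n = \SL(n)_{ss}$, I first extend $\phi$ to $\GL(n)_{*,ss}$ via the $n$-th-root construction used in the proof of Theorem~\ref{th:main-result-gln}(II)). This yields $\phi|_{\U(n)} \in \{\Ad_T,\ \Ad_T \circ (\cdot)^t\}$. Composing with $(\cdot)^t$ and $\Ad_{T^{-1}}$ as needed -- both absorbed into the target composite form -- allows me to assume $\phi|_{\U(n)} = \id$. Then, for each maximal algebraic torus $\G = U\D(n)U^{-1}$ with $U \in \U(n)$, Proposition~\ref{pr:alg.cpct.tor} gives $\phi|_\G = \Ad_R$; the constraint that $R$ centralize the maximal compact subtorus $\G \cap \U(n)$ (whose centralizer in $\GL(n)$ is $\G$) together with abelianness of $\G$ forces $\Ad_R = \id$ on $\G$. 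Every invertible normal matrix lies in such a $\G$, so $\phi|_{N_n \cap \ca{X}_n} = \id$.

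Second, I establish well-definedness of $\Theta$ via Putnam--Fuglede. Suppose $SNS^{-1} = S'N'(S')^{-1}$ with $S, S' \in \GL(n)_{\ge 0}$ and $N, N' \in N_n \cap \ca{X}_n$. Setting $W = (S')^{-1}S$, the equation reads $WN = N'W$. Putnam--Fuglede, applied to this intertwining of normal matrices, gives $WN^* = (N')^*W$. On the other hand, direct adjointing of $N' = WNW^{-1}$ yields $(N')^* = (W^*)^{-1}N^*W^*$. Combining these forces $W^*W = S(S')^{-2}S$ to commute with $N^*$ (and so with $N$, by Fuglede). A short computation then rewrites this as $S^{-1}NS = (S')^{-1}N'S'$. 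Involutivity follows immediately from the representation $\Theta(SNS^{-1}) = (S^{-1})N(S^{-1})^{-1}$, which has the required $(\text{positive})(\text{normal})(\text{positive})^{-1}$ form.

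Third, after the reduction $\phi|_{N_n \cap \ca{X}_n} = \id$, I need to show $\phi \in \{\id, \Theta\}$ on all of $\ca{X}_n$. For $X = SNS^{-1}$ with $N$ of simple spectrum, Proposition~\ref{pr:alg.cpct.tor} applied to the maximal torus $T_X = S\D(n)S^{-1}$ gives $\phi|_{T_X} = \Ad_{R_X}$ for some $R_X$ (defined modulo $T_X$). Thus $\phi(X) = R_X X R_X^{-1}$ has eigenbasis $\{R_X S e_j\}_j$, and the key assertion is that $R_X$ lies in $\{I, S^{-2}\} \cdot T_X$, corresponding to $\phi(X) = X$ and $\phi(X) = \Theta(X)$ respectively. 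My approach is a path-lifting argument along $X_t := S^t N S^{-t}$ from $N$ (at $t = 0$) to $X$ (at $t = 1$): each $X_t$ lies in the conjugacy class of $N$, and spectrum preservation places $\phi(X_t) = F_t N F_t^{-1}$ in the same class, with $F_t \in \GL(n)/T_N$ continuous and $F_0 = I$. Constraints from continuity, from commutativity preservation on overlapping tori, and from a second application of Putnam--Fuglede to analyze admissible eigenbasis evolutions should pin the endpoint $F_1$ down to $\{S T_N,\ S^{-1} T_N\}$. Connectedness of $\ca{X}_n$ and continuity of $\phi$ then propagate a single global choice.

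The principal obstacle is this final rigidity claim -- ruling out intermediate eigenbases for $\phi(X)$ and forcing the $\{I, S^{-2}\}$ dichotomy. Putnam--Fuglede plays a crucial second role here, beyond its use in establishing the well-definedness of $\Theta$, by restricting how conjugators between normal matrices (or matrices intertwined with normals) may vary.
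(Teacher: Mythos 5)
Your Steps 1 and 2 are sound. Step 1 correctly reduces to $\phi|_{\U(n)}=\id$ and then, via Proposition~\ref{pr:alg.cpct.tor} and the centralizer argument, obtains $\phi|_{N_n\cap\ca{X}_n}=\id$; this nicely subsumes the paper's Lemma~\ref{le:all.no.transp} (ruling out a variable transposition on the various $\Ad_S\U(n)$), which the paper proves instead by a connectedness argument on $\GL(n)$. Step 2's Putnam--Fuglede calculation for well-definedness of $\Theta$ is correct, though the paper's version (in Proposition~\ref{pr:no.theta}) is shorter: apply Putnam--Fuglede to get $SN^*S^{-1}=TM^*T^{-1}$ and take adjoints, using that $S,T$ are self-adjoint.

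The genuine gap is Step 3, which you yourself flag as the ``principal obstacle.'' Your path-lifting sketch along $X_t=S^tNS^{-t}$ identifies the correct target --- pinning $R_X$ (mod stabilizer) to $\{I,S^{-2}\}$ --- but offers no mechanism that forces this dichotomy. Continuity of $t\mapsto F_t\in\GL(n)/T_N$ is plausible but does not by itself restrict the endpoint; commutativity preservation ``on overlapping tori'' is mentioned but not pressed into a concrete constraint; and ``a second application of Putnam--Fuglede to analyze admissible eigenbasis evolutions'' is not an argument. The paper's actual rigidity step (Proposition~\ref{pr:is.or.s2}) is the technical heart of the result and proceeds quite differently: (a) one first observes that $T_S$ commutes with every unitary commuting with $S$, so by the von Neumann bicommutant theorem $T_S=p_S(S)$ is a \emph{polynomial} in $S$; (b) one then probes with a two-parameter family of unitary involutions $U,V$ (where $V$ has a rank-one $(-1)$-eigenspace spanned by $v$, and $U$ is rigged to commute with $\Ad_SV$), and commutativity preservation forces $TSv$ into a $2$-dimensional span $\spn\{Sv, S^{-1}v\}$; (c) finally a linear-independence / genericity argument --- using $n\ge 3$ --- yields $(\lambda_j p(\lambda_j))_j\in\{(\lambda_j)_j,(1/\lambda_j)_j\}$, hence $T_S\in\{I_n,S^{-2}\}$. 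A connectedness argument then forces a single global choice. None of these ingredients appear in your sketch, and it is not clear Putnam--Fuglede alone can substitute for the bicommutant theorem plus the involution probe. As written, Step 3 is an unproven assertion rather than a proof.

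One small inaccuracy worth noting: for $X=SNS^{-1}$ with $N$ normal non-diagonal, the maximal torus through $X$ is $S\,\T_N\,S^{-1}$ where $\T_N$ is the torus through $N$, not $S\D(n)S^{-1}$ as written; this does not affect the structure of the argument but would need fixing in a complete write-up.
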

As we will soon demonstrate, the ``exotic'' assignment $\Theta$ of \eqref{eq:sns} is indeed well-defined on $\ca{X}_n$ and preserves both spectra and commutativity. Because nevertheless a non-trivial argument establishes that it is not continuous on $\ca{X}_n$, it will be excluded from further consideration (see Proposition~\ref{pr:no.theta}). The proof of Proposition \ref{pr:still.theta} requires some preparation and auxiliary observations.

\begin{lemma}\label{le:all.no.transp}
  Under the hypotheses of Theorem \ref{th:ss} with $\cX_n=\GL(n)_{ss}$, there is $\circ\in \{\text{blank},\ t\}$ such that
  \begin{equation*}
    \left(\forall S\in \GL(n)\right)    
    \left(\exists T\in \GL(n)\right)
    \left(\forall U\in \U(n)\right)
    \quad:\quad
    \phi(SUS^{-1})=T(SUS^{-1})^{\circ}T.
  \end{equation*}
\end{lemma}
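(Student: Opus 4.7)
My plan is to leverage the classification from Theorem \ref{th:main-result-gln} applied to the ``twisted unitary copies'' $S\U(n)S^{-1}\subset \GL(n)_{ss}$, and then to use a connectedness argument in the parameter $S\in\GL(n)$ to show that the resulting type (identity vs.\ transpose) is independent of $S$.

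\textbf{Step 1 (one invocation of Theorem \ref{th:main-result-gln} per $S$).} For each $S\in\GL(n)$ introduce the auxiliary map $\psi_S:\U(n)\to M_n$ defined by $\psi_S(U):=\phi(SUS^{-1})$. It is continuous, preserves commutativity (since $U_1U_2=U_2U_1$ forces $[SU_1S^{-1},SU_2S^{-1}]=0$), and by Corollary \ref{cor:sp.shrk.conn.conf.sp} it preserves spectra: $\spc(\psi_S(U))=\spc(SUS^{-1})=\spc(U)$. So $\psi_S$ satisfies the hypotheses of Theorem \ref{th:main-result-gln} with $\cX_n=\U(n)$, yielding $T_S\in\GL(n)$ and $\circ_S\in\{\id,t\}$ with
\[
\phi(SUS^{-1})=T_S\,U^{\circ_S}\,T_S^{-1}\qquad\forall\,U\in\U(n).
\]
Setting $T:=T_SS^{-1}$ when $\circ_S=\id$ and $T:=T_SS^t$ when $\circ_S=t$, a direct calculation brings this into the desired form $\phi(SUS^{-1})=T(SUS^{-1})^{\circ_S}T^{-1}$.

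\textbf{Step 2 (the type $\circ_S$ is $S$-independent).} Partition $\GL(n)=A\sqcup B$ with $A:=\{S:\circ_S=\id\}$ and $B:=\{S:\circ_S=t\}$. I claim both are closed; the connectedness of $\GL(n)$ then forces one to be empty, giving the common value $\circ$ sought. So suppose $S_k\to S_0$ with $S_k\in A$ and $S_0\in B$. By continuity of $\phi$, for every $U\in\U(n)$,
\[
T_{S_k}UT_{S_k}^{-1}=\phi(S_kUS_k^{-1})\xrightarrow[k\to\infty]{}\phi(S_0US_0^{-1})=T_{S_0}U^tT_{S_0}^{-1}.
\]
With $R_k:=T_{S_0}^{-1}T_{S_k}$, this reads $R_kUR_k^{-1}\to U^t$ for every $U\in\U(n)$.

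\textbf{The main obstacle} is that $R_k$ has no a priori control and may well diverge in $\GL(n)$ (or even in $\mathrm{PGL}(n)$), so one cannot simply extract a limit $R\in\GL(n)$ implementing $U\mapsto U^t$ by conjugation. The trick that sidesteps this entirely — and is really the crux of the argument — is to detect the contradiction at the level of the image limits, via multiplicativity of conjugation: for any $U,V\in\U(n)$,
\[
(UV)^t=\lim_k R_k(UV)R_k^{-1}=\lim_k\bigl(R_kUR_k^{-1}\bigr)\bigl(R_kVR_k^{-1}\bigr)=U^tV^t,
\]
while at the same time $(UV)^t=V^tU^t$. Hence $V^tU^t=U^tV^t$ for all $U,V\in\U(n)$, contradicting the non-commutativity of $\U(n)$ for $n\ge 3$. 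Thus $A$ is closed; the symmetric argument yields $B$ closed, and we are done.
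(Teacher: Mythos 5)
Your proof is correct and takes essentially the same approach as the paper: partition $\GL(n)$ by the type $\circ_S$ and use connectedness. The paper phrases closedness by characterizing the two sets as those $S$ for which $\phi|_{\Ad_S\U(n)}$ preserves (resp.\ reverses) multiplication --- closed conditions by continuity of $\phi$ --- and gets disjointness from non-abelianness of $\Ad_S\U(n)$; your limiting computation with $R_k$ is exactly this observation unwound, since ``pointwise limits of multiplicative maps are multiplicative'' is what makes the condition closed. One small presentational point: you write $\GL(n)=A\sqcup B$, which presumes $\circ_S$ is uniquely determined by $S$; that uniqueness (equivalently, $A\cap B=\emptyset$) is needed for the connectedness step and follows from the very same multiplicativity argument you use (specialize to the constant sequence $S_k=S_0$), but it is worth a sentence rather than being absorbed silently into the $\sqcup$.
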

\begin{proof}
  The issue is quantification ordering: the unitary case of Theorem \ref{th:main-result-gln} shows that for each $S$ there are such $T=T_S$ and $\circ=\circ_S$, whereas we claim here that the \emph{same} $\circ$ will do for \emph{all} $S$. To see this, note that the closed subsets
  \begin{equation}\label{eq:s.circ}
    \left\{S\in \GL(n)\ :\ \circ_S=\circ\right\}\subseteq \GL(n)
    ,\quad
    \circ\in \{\text{blank},\ t\}
  \end{equation}
  partition the connected topological space $\GL(n)$. Indeed, the closure and disjointness claims both follow from the characterization of the sets \eqref{eq:s.circ} as
  \begin{equation*}
    \left\{
      S\in \GL(n)\ :\ \phi|_{\Ad_S \U(n)}
      \left[
      \begin{array}{l}        
        \text{preserves multiplication (if $\circ = \text{blank}$)}\\
        \text{reverses multiplication (if $\circ = t$)}        
      \end{array}
      \right.
    \right\}.
  \end{equation*}
  Multiplication preservation and reversal are both closed conditions by the continuity of $\phi$, and the two sets cannot overlap because multiplication and its opposite are distinct operations on the non-abelian conjugates of $\U(n)$.
\end{proof}

\begin{proposition}\label{pr:is.or.s2}
  Under the hypotheses of Theorem \ref{th:ss} with $\cX_n=\GL(n)_{ss}$, suppose $\phi|_{\U(n)}=\id_{\U(n)}$. We then have
  \begin{equation}\label{eq:le:is.or.s2:ids2}
    \left(\exists p\in \{0,\ -2\}\right)
    \left(\forall 0\le S\in \GL(n)\right)
    \quad:\quad
    \phi|_{\Ad_S \U(n)}
    =
    \Ad_{S^p}.
  \end{equation}
\end{proposition}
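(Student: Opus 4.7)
Since $\phi|_{\U(n)} = \id_{\U(n)}$ precludes the transpose branch of Lemma~\ref{le:all.no.transp}, for every positive $S \in \GL(n)$ there exists $T_S \in \GL(n)$ (unique modulo $\bC^{\times}$) with $\phi(SUS^{-1}) = T_S\, SUS^{-1}\, T_S^{-1}$ for all $U \in \U(n)$. The intersection $\Ad_S\U(n) \cap \U(n)$ equals $C_{\U(n)}(S)$, the centralizer of $S$ in $\U(n)$, and it is fixed pointwise by $\phi$ by hypothesis; comparing the two representations of $\phi$ on this set forces $T_S$ to commute with every element of $C_{\U(n)}(S)$. Because the spectral projections of the Hermitian matrix $S$ lie in the real-linear span of $C_{\U(n)}(S)$, the double-commutant theorem yields $T_S \in \{S\}''$, so $T_S$ lies in the (commutative) algebra generated by $S$; when $S$ has simple spectrum (a dense condition among positive $S$), this gives $T_S = f_S(S)$ for some $\bC^{\times}$-valued function $f_S$ on $\spc(S)$.

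\textbf{Reducing the candidate pool.} Two further observations pin down $f_S$. First, $\U(n)$-equivariance: for $V \in \U(n)$ the identity $\phi(V) = V$ combined with $\Ad_{VSV^{-1}}\U(n) = V (\Ad_S\U(n)) V^{-1}$ yields $T_{VSV^{-1}} \in \bC^{\times} \cdot V T_S V^{-1}$, so $f_S(\lambda)$ depends only on $\lambda$, not on $S$; continuity in $S$ (and density of simple-spectrum positives) then produces a single continuous $f \colon \bR_{>0} \to \bC^{\times}$ with $T_S = f(S)$ modulo scalars. Second, the global consistency of $\phi$ across different orbits $\Ad_S \U(n)$ and $\Ad_{S'} \U(n)$ -- whose intersections contain enough structure (e.g.\ scalars, and more generally elements $SUS^{-1} = S'U'S'^{-1}$ obtained by specific choices of $U$ and $U'$) to constrain how $T_S$ and $T_{S'}$ relate -- forces $f$ to satisfy a multiplicative functional equation, giving $f(x) = c\, x^{\alpha}$ for some constant $c$ and some exponent $\alpha \in \bC$. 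A direct test of the candidate $f(x) = x^\alpha$ against the explicit pair $(S, SUS^{-1})$ for a single non-normal $SUS^{-1}$ (for instance one with $S$ and $U$ chosen so $SUS^{-1}$ lies simultaneously in $\Ad_S\U(n)$ and in the image orbit under $\phi$ as forced by spectrum preservation) then excludes all exponents except $\alpha \in \{0, -2\}$, which correspond exactly to $T_S \in \bC^{\times} \cdot \{I_n, S^{-2}\}$ and hence to $\phi|_{\Ad_S\U(n)} \in \{\id, \Theta\}$.

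\textbf{Globalization, and the main obstacle.} Having shown that $p_S \in \{0, -2\}$ is well defined for each positive $S$, the assignment $S \mapsto p_S$ is locally constant by continuity of $\phi$ together with the discreteness of $\{0, -2\}$, and the space $\GL(n)_{\ge 0}$ of positive definite invertibles is connected (even contractible, via $\log$ onto the real vector space of Hermitian matrices), so $p_S$ is globally constant. The main difficulty is the second step: while the centralizer argument of the first paragraph cleanly places $T_S$ in $\{S\}''$, cutting this down further to scalar multiples of $I_n$ or $S^{-2}$ requires exploiting the global structure of $\phi$ on all of $\ca{X}_n$ simultaneously -- in particular comparing the orbits $\Ad_S\U(n)$ and $\Ad_{S^{-1}}\U(n)$ that are swapped by the $\Theta$ candidate -- and likely invokes the rigidity coming from the spectrum-preservation property on non-normal elements, where tools such as the Putnam--Fuglede theorem (flagged as essential in the introduction) enter to rule out intermediate exponents.
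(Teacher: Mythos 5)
Your opening moves match the paper closely: eliminating the transpose branch via Lemma~\ref{le:all.no.transp} together with $\phi|_{\U(n)}=\id$, observing that $T_S$ commutes with the centralizer $C_{\U(n)}(S)$, and invoking the bicommutant theorem to place $T_S\in\{S\}''$, i.e.\ $T_S=p_S(S)$ for a polynomial $p_S$. Your final globalization step is also essentially the paper's Step~(II): locally constant $p_S$ on a connected set (the paper works with $\GL(n)_{\ge 0}\setminus\bR_{>0}I_n$, since $\bR_{>0}I_n$ is precisely where both options coincide).

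The genuine gap is exactly the one you flag yourself: the passage from $T_S\in\{S\}''$ to $T_S\in\bC^\times\cdot\{I_n,\,S^{-2}\}$. Your proposed route — $\U(n)$-equivariance forcing $f_S$ to depend only on $\lambda$, then a ``multiplicative functional equation'' yielding $f(x)=c\,x^{\alpha}$, then a ``direct test'' to pin $\alpha\in\{0,-2\}$ — is not actually carried out: neither the functional equation nor the direct test is derived, and the equivariance step only relates $f_S$ and $f_{VSV^{-1}}$ up to a scalar, which does not by itself show $f_S$ is independent of $S$. You also speculate that Putnam--Fuglede is the missing tool here, but in the paper it enters only in Proposition~\ref{pr:no.theta} (to show $\Theta$ is well-defined), not in this Proposition. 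The paper instead closes the gap by exploiting \emph{commutativity preservation} on a carefully engineered pair: a unitary self-adjoint involution $V$ with one-dimensional $(-1)$-eigenspace spanned by $v=\sum c_j e_j$, and a unitary involution $U$ whose $(-1)$-eigenspace contains $Sv$ and its projection onto $S(v^{\perp})$, so that $U$ commutes with $SVS^{-1}$. Commutativity preservation then forces $U$ to commute with $\Ad_{TS}V$, hence $TSv=\sum c_j\lambda'_j e_j$ (with $\lambda'_j=\lambda_j p(\lambda_j)$) is an eigenvector of $U$, and a connectedness argument over a parameter space $\cP$ shows it lies in the $(-1)$-eigenspace $\ker(I_n+U)=\spn\{Sv,\ S^{-1}v\}$. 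Applying the same to $(1/\lambda'_j)$ gives
\[
  \dim\spn\left\{(\lambda_j),\ (1/\lambda_j),\ (\lambda'_j),\ (1/\lambda'_j)\right\}\le 2,
\]
which produces the constraint $\left(a\lambda_j^2+b\right)\left(c\lambda_j^2+d\right)=\lambda_j^2$ for all $j$; for $n\ge 3$ and generic $(\lambda_j)$ this forces $(a,b,c,d)\in\{(1,0,0,1),\,(0,1,1,0)\}$ and hence $T_S\in\{I_n,\,S^{-2}\}$. This eigenvector argument, not a functional-equation or Putnam--Fuglede argument, is the substance of the proof, and it is absent from your proposal.
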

\begin{proof}
  The claim is that \eqref{eq:le:is.or.s2:ids2} holds with the same $p$ for all $S$. We will first prove a weaker claim, allowing possibly variable $p$ for different $S$. 
  
  \begin{enumerate}[(I),wide]
  \item \textbf{: For each $0\le S\in \GL(n)$ \eqref{eq:le:is.or.s2:ids2} holds for some $p=p_S\in \{0,-2\}$.} We take \eqref{eq:stu} for granted, along with its ancillary notation. First of all, for an arbitrary unitary $U \in \U(n)$ which commutes with $S$, \eqref{eq:stu} for $T=T_S$ implies
  \begin{equation*}
  U=\phi(U)=\phi(SUS^{-1})=TSUS^{-1}T^{-1}=TUT^{-1}.
  \end{equation*}
  Therefore, $T$ commutes with all unitary matrices commuting with $S$, or equivalently, $T$ belongs to the \emph{bicommutant} \cite[\S I.2.5.3]{blk} of (the positive) $S$. Hence, $T$ must be a polynomial in $S$ by the (finite-dimensional version of the) celebrated \emph{von Neumann bicommutant theorem} \cite[Theorem I.9.1.1]{blk}:
    \begin{equation}\label{eq:exists.ps}
      \exists p_S\in \bC[x]
      \quad:\quad
      T=p_S(S). 
    \end{equation}
    We may as well assume $S$ diagonal: $S=(\lambda_1, \ldots, \lambda_n)$, with the standard basis vectors $(e_1, \ldots, e_n)$ as eigenvectors (as usual, we identify diagonal matrices with the corresponding tuples). It will often also be convenient to assume the tuples $(\lambda_j)=(\lambda_1, \ldots, \lambda_n)$ we work with range over a conveniently chosen dense space (e.g.\ they are all distinct): by the continuity of the map
    \begin{equation*}
      \GL(n)_{\ge 0}
      \ni
      S
      \xmapsto{\quad}
      \Ad_S \U(n)\in \left\{\text{closed bounded subsets of $\GL(n)$}\right\}
    \end{equation*}
    for the topology on the codomain induced by the \emph{Hausdorff distance} \cite[Definition 7.3.1]{bbi} (attached to any metric topologizing $\GL(n)$), the sets
    \begin{equation}\label{eq:twops}
      \left\{0\le S\in \GL(n)\ :\ \phi|_{\Ad_S \U(n)}=\Ad_{S^p}\right\}\subseteq \GL(n)_{\ge 0}
      ,\quad
      p\in \{0,\ -2\}
    \end{equation}
    are closed. 
    
    Next, commutativity preservation of $\phi$ entails
    \begin{equation}\label{eq:uv.comm}
      \left(\forall U,V\in \U(n)\right)
      \ :\ 
      [U,\ SVS^{-1}]=0
      \xRightarrow{\quad}
      [\phi(U),\ \phi(SVS^{-1})]
      =
      [U,\ TSVS^{-1}T^{-1}]=0.
    \end{equation}
    
    We examine \eqref{eq:uv.comm} as applied to the following setup:
    \begin{itemize}[wide]
    \item $V$ is a unitary (hence also self-adjoint) involution with a one-dimensional $(-1)$-eigenspace spanned by $v=\sum_{j=1}^nc_j e_j$.
      
    \item $U$ is also a unitary involution, whose $(-1)$-eigenspace is spanned by $Sv=\sum_{j=1}^n c_j \lambda_j e_j$ and its orthogonal projection on $S(v^{\perp})$ (where $v^\perp$ is the orthogonal complement of $\{v\}$). Said $(-1)$-eigenspace is thus at most 2-dimensional. Generically, the dimension will be precisely 2: it is enough to assume the $S$ non-scalar (as discussed, there is no harm in doing so) and the $c_i$ all non-zero.
    \end{itemize}
    By fiat, $U$ leaves invariant the $\pm 1$-eigenspaces $\bC Sv$ and $S(v^{\perp})$ of $SVS^{-1}$ so it commutes with that operator. It then follows from \eqref{eq:uv.comm} that $U$ commutes with $\Ad_{TS}V$ as well.  
    
    In particular, the $(-1)$-eigenvector
      \begin{equation}\label{eq:definition of TSv}
        TSv
        =
        \sum_{j=1}^n c_j \lambda'_j e_j
        ,\quad
        \lambda'_j:=\lambda_j p(\lambda_j)
        ,\quad
        p=p_S\text{ as in \eqref{eq:exists.ps}}
      \end{equation}
      of $\Ad_{TS}V$ (unique up to scaling) is also an eigenvector of $U$, with eigenvalue $\pm 1$. We claim that in fact
      \begin{equation*}
        TSv\in \ker(I_n+U),
      \end{equation*}
      i.e.\ $TSv$ is a $(-1)$-eigenvector of $U$.  This is certainly true for $S=(\lambda_j)$ sufficiently close to the identity: $\Ad_{TS}V$ will then be close to $\Ad_SV$ for the latter is in turn close to a unitary and we are assuming $\phi=\id$ on unitaries. The $(-1)$-eigenspaces of $\Ad_SV$ and $\Ad_{TS}V$ will thus also be close in the projective space $\bP \bC^n$, so in particular the $(-1)$-eigenspace of $\Ad_{TS}V$ cannot be contained in the $1$-eigenspace of $U$. To extend this to all $S=(\lambda_j)$ under consideration (i.e.\ non-scalar diagonal positive matrices), consider the parameter space 
      \begin{equation}\label{eq:params}
        \cP
        :=
        \left\{\left((c_j),\ (\lambda_j)\right)\ :\ c_j\in \bC^{\times},\ \lambda_j>0\text{ not all equal}\right\}
      \end{equation}
      for our choices. $U$, $V$ and $v$ depend continuously on the $(c_j)$ and $(\lambda_j)=S$. We have maps
      \begin{equation*}
        \begin{tikzpicture}[>=stealth,auto,baseline=(current  bounding  box.center)]
          \path[anchor=base] 
          (0,0) node (l) {$\cP\ni ((c_j),\ (\lambda_j))$}
          +(5,1) node (u) {$\bC TSv\in \bP\bC^n=\bG(1,\bC^n)$}
          +(7,0) node (r) {$\ker(I_n+U)\in \bG(2,\bC^n)$}
          +(5,-1) node (d) {$\ker(I_n-U)\in \bG(n-2,\bC^n)$}
          ;
          \draw[|->] (l.north east) to[bend left=6] node[pos=.5,auto] {$\scriptstyle \Omega_1$} (u);
          \draw[|->] (l.east) to[bend left=0] node[pos=.5,auto] {$\scriptstyle \Omega_2$} (r);
          \draw[|->] (l.south east) to[bend right=6] node[pos=.5,auto,swap] {$\scriptstyle \Omega_{n-2}$} (d);
        \end{tikzpicture}
      \end{equation*}
      all continuous: $\Omega_{2}$ and $\Omega_{n-2}$ as in the proof of Lemma \ref{le:def.latt.mor} (the portion invoking \cite[Proposition 13.4]{salt_divalg}), and $\Omega_1$ by the same token, by the continuity of $\phi$ and that of $\Ad_SV$ (as a function of $\cP$), since
      \begin{equation*}
        \bC TSv=\ker (I_n+\Ad_{TS}V)=\ker(I_n+\phi(\Ad_SV)).
      \end{equation*}
      The loci
      \begin{equation*}
        \cP_{\bullet}
        :=
        \left\{p\in \cP\ :\ \Omega_1(p)\le \Omega_{\bullet}(p)\right\}
        ,\quad \bullet\in \{2,n-2\}
      \end{equation*}
      are closed and partition the \emph{connected} space $\cP$; because we observed above that $\cP_2$ is not empty, it must be all of $\cP$. Thus, for 
\begin{equation*}
   w:=S^{-1}v=\sum_{j=1}^n c_j \frac 1{\lambda_j}e_j\in (Sv^{\perp})^{\perp}
\end{equation*} 
we have $w \in \ker(I_n+U)$, so
      \begin{equation*}
       \sum_{j=1}^n c_j \lambda'_j e_j=TSv\in \ker(I_n+U) = \spn\left\{Sv,\ w\right\}=\spn\left\{\sum_{j=1}^n c_j \lambda_j e_j,\ \sum_{j=1}^n c_j \frac 1{\lambda_j}e_j\right\}.
     \end{equation*}
      In particular, as all $c_1, \ldots , c_n$ are nonzero, we conclude that the tuples $(\lambda_j)$, $(1/\lambda_j)$ and $(\lambda'_j)$ are linearly dependent. The same reasoning applies also to the orthogonal complement
      \begin{equation*}
        \bC\left(\sum_{j=1}^n c_j \frac 1{\lambda'_j}e_j\right)
        =
        (TSv^{\perp})^{\perp},
      \end{equation*}
      so that vector too must belong to the same $(\le 2)$-dimensional span:
      \begin{equation*}
        \dim \spn\left\{(\lambda_j),\ (1/\lambda_j),\ (\lambda'_j),\ (1/\lambda'_j)\right\}\le 2.
      \end{equation*}
    There are thus $a,b,c,d\in \bC$ with
    \begin{equation*}
      \forall 1\le j\le n
      \quad:\quad
      \lambda'_j = a\lambda_j+b\frac 1{\lambda_j}
      \quad\text{and}\quad
      \frac 1{\lambda'_j} = c\lambda_j + d\frac 1{\lambda_j}
    \end{equation*}
    This also reads
    \begin{equation}\label{eq:abcd}
      \exists a,b,c,d\in \bC
      \quad:\quad
      \left(a\lambda_j+b\frac 1{\lambda_j}\right)
      \left(c\lambda_j + d\frac 1{\lambda_j}\right)
      =1    
      \quad\text{and hence}\quad
      \left(a\lambda_j^2+b\right)
      \left(c\lambda_j^2 + d\right)
      =\lambda_j^2
    \end{equation}
    for all $1 \leq j \leq n$.  Because $n\ge 3$, for a generic choice of $\lambda_j^2$ the only $(a,b,c,d)$ witnessing \eqref{eq:abcd} are those that produce a tautological equation (i.e.\ $\lambda_j^2=\lambda_j^2$). In other words, for a dense set of $(\lambda_j)$ we can take
    \begin{equation*}
      (a,b,c,d)\in \{(1,0,0,1),\ (0,1,1,0)\} \xRightarrow{\quad} (\lambda_j') \in \{(\lambda_j), (1/\lambda_j)\}
    \end{equation*}
    and hence by \eqref{eq:exists.ps} and \eqref{eq:definition of TSv} we obtain
 \begin{equation*}
     T=T_S=T_{(\lambda_j)}\in \{I_n,\ S^{-2}\}.
 \end{equation*}

    Continuity ensures that this holds for all $S\ge 0$, finishing the proof.
    
  \item \textbf{: The same $p=p_S\in \{0,-2\}$ functions for all $S$.} As already observed, the two subsets \eqref{eq:twops} are closed. Their intersection is $\bR_{>0} I_n$, for the sets $\Ad_S \U(n)$ and $\Ad_{S^{-1}}\U(n)$ are distinct for every non-scalar positive invertible $S$. The intersections of \eqref{eq:twops} with the connected $\GL(n)_{\ge 0}\setminus \bR_{>0}I_n$ partition that space, so one of the two must constitute all of $\GL(n)_{\ge 0}$.  \qedhere
  \end{enumerate}
\end{proof}

\begin{proof}[Proof of Proposition~\ref{pr:still.theta}]
  Note that $\phi$ is spectrum-preserving by Corollary \ref{cor:sp.shrk.conn.conf.sp}. 

  \begin{enumerate}[(I),wide]

  \item\label{item:pr:still.theta:gl}  \textbf{: $\ca{X}_n = \mathrm{GL}(n)_{ss}$.}  We already know from the unitary branch of Theorem \ref{th:main-result-gln} that $\phi|_{\U(n)}$ is of the form \eqref{eq:inner}, so we can recast the claim as
    \begin{equation*}
      \phi|_{\U(n)}=\id_{\U(n)}
      \xRightarrow{\quad}
      \phi\in\{\id_{\GL(n)_{ss}}, \Theta\}.
    \end{equation*}
    The result also applies to arbitrary maximal compact subgroups $\Ad_{S}\U(n)\le \GL(n)$, $S \in \GL(n)$, so that $\phi$ restricted to each such subgroup is of the form $\Ad_{T_S}\circ \bullet$, with $\bullet\in \{\id,\ (\cdot)^t\}$ and $T_S \in \GL(n)$. Furthermore, because we are assuming $\bullet=\id$ at least when $S=\id$, Lemma \ref{le:all.no.transp} shows that this is in fact always so (i.e.\ no transposition):
    \begin{equation}\label{eq:stu}
      \left(\forall S\in \GL(n)\right)
      \left(\exists T=T_S\in \GL(n)\right)
      \left(\forall U\in \U(n)\right)
      \quad:\quad
      \phi(SUS^{-1})
      =
      TSUS^{-1}T^{-1}
    \end{equation}
    There is no loss of generality in assuming the operators $S$ positive, once more leveraging polar decompositions. Proposition  \ref{pr:is.or.s2} then implies that
    \begin{equation*}
      \forall S\in \GL(n)_{\ge 0}
      \quad :\quad
      \phi|_{\Ad_S \U(n)}= \Ad_{S} \quad \text{ or } \quad \phi|_{\Ad_S \U(n)}=\Ad_{S^{-2}}\quad\left(\text{exclusively}\right).
    \end{equation*}
    In either case  the conclusion follows as in the normal-matrix branch of Theorem \ref{th:main-result-gln}: every diagonalizable matrix is a linear combination of mutually-commuting elements of $\Ad_S\U(n)$ for some $S\in \GL(n)$.

  \item\label{item:pr:still.theta:sl} \textbf{: $\ca{X}_n = \mathrm{SL}(n)_{ss}$.} We can proceed precisely as in step \ref{item:th:main-result-gln:sl} of the proof of Theorem \ref{th:main-result-gln}. By Proposition \ref{pr:alg.cpct.tor} $\phi$ restricts to all maximal tori as conjugation, and every semisimple element belongs to some maximal torus. It follows that $\phi\left(\SL(n)_{ss}\right)\subseteq \SL(n)_{ss}$, and following the same notation as in the proof of Theorem  \ref{th:main-result-gln}\ref{item:th:main-result-gln:sl} the map $\zeta$ of \eqref{eq:psi.conj.root.det} restricts in the present context to the continuous, spectrum- and commutativity-preserving extension of $\phi$ to $\GL(n)_{*ss} \supseteq \SL(n)_{ss}$  to which step \ref{item:pr:still.theta:gl} applies. The conclusion follows from the fact that all components of \eqref{eq:comp3maps} preserve scalar multiplication.  \qedhere
  \end{enumerate}
\end{proof}


To address the continuity of the assignment $\Theta$ from \eqref{eq:sns}, we employ the following extension \cite[Abstract]{zbMATH06285212} of of the usual \cite[\S I.6.2.4]{blk} normal-operator \emph{continuous functional calculus} to all of $(M_{n})_{ss}$.

\begin{definition}\label{def:fn.calc.ss}
  Write an arbitrary $T \in (M_{n})_{ss}$ as
  \begin{equation}\label{eq:tel}
    T=\sum_{\lambda\in \spc(T)} \lambda E_{\lambda},
  \end{equation} 
  where for each $\lambda \in \spc(T)$ the (not necessarily normal) idempotent $E_{\lambda}$ is uniquely determined by
  \begin{equation*}
    E_{\lambda}^2=E_{\lambda}
    ,\quad
    \im E_{\lambda}=\ker(\lambda I_n-T)
    ,\quad
    E_{\lambda}T = TE_{\lambda}.
  \end{equation*}
  We write
  \begin{equation}\label{eq:cont.fc.ss}
    f\left(T\right)
    :=
    \sum_{\lambda\in \spc(T)} f(\lambda)E_{\lambda}
    ,\quad
    \forall
    \left(\spc(T)\subset U=\overset{\circ}{U}\right)
    \xrightarrow[\quad\text{function}\quad]{f}
    \bC.
  \end{equation}
  The construction \eqref{eq:cont.fc.ss} is Ad-invariant, in the sense that $\Ad_S f(T)=f\left(\Ad_S T\right)$.  \hfill$\lozenge$  
\end{definition}

Note that \eqref{eq:cont.fc.ss} certainly is not \emph{continuous}, generally, on all of  $(M_{n})_{ss}$, even when $f$ is. Example~\ref{ex:fn.calc.disc} below discusses the case $n=2$. For $n\ge 3$ \eqref{eq:cont.fc.ss} is continuous on 
\begin{equation*}
  \left\{T\in (M_{n})_{ss}\ :\ \spc(T)\subseteq U \right\}
  \subseteq
  (M_{n})_{ss}
\end{equation*}
precisely \cite[Theorem 1.2(A)]{zbMATH06285212} when $f$ is holomorphic on $U$. 

\begin{example}\label{ex:fn.calc.disc}
  The matrix
  \begin{equation*}
    T:=
    \begin{pmatrix}
      \lambda_1&\alpha\\
      0&\lambda_2
    \end{pmatrix}
    \in
    M_2
  \end{equation*}
  is semisimple whenever $\lambda_i$ are distinct, with eigenvalues/eigenvectors
  \begin{equation*}
    \lambda_1\quad:\quad
    \begin{pmatrix}
      1\\0
    \end{pmatrix}
    \quad\text{and}\quad
    \lambda_2\quad:\quad
    \begin{pmatrix}
      \frac{\alpha}{\lambda_2-\lambda_1}\\1
    \end{pmatrix}.
  \end{equation*}
  For continuous function $f$ the matrix $f(T)$ will have eigenvalues $f(\lambda_j)$ ($j=1,2$) along the same lines, so must be the matrix
  \begin{equation*}
    f(T)
    =
    \begin{pmatrix}
      f(\lambda_1)&\frac{\alpha(f(\lambda_2)-f(\lambda_1))}{\lambda_2-\lambda_1}\\
      0&f(\lambda_2)
    \end{pmatrix}.
  \end{equation*}
  One can easily arrange for $f$ to be continuous in a neighborhood of $0=f(0)\in \bC$, with
  \begin{equation*}
    \alpha,\ \lambda_j
    \xrightarrow{\quad}
    0
    \quad\text{but}\quad
    \frac{\alpha(f(\lambda_2)-f(\lambda_1))}{\lambda_2-\lambda_1}
    \longarrownot
    \xrightarrow{\quad}
    0;
  \end{equation*}
  for those $f$, the family of semisimple matrices $f(\bullet)$ will be discontinuous at $0_2\in M_2$. \hfill$\lozenge$
\end{example}

Example~\ref{ex:fn.calc.disc} can be expanded into a characterization of the points of continuity for \eqref{eq:cont.fc.ss} (note the parallels to Proposition~\ref{pr:nbhd.eig.sel.m}). 

\begin{proposition}\label{pr:when.fn.calc.cont.op}
  The following conditions on a matrix $T\in (M_{n})_{ss}$ are equivalent:
  \begin{enumerate}[(a),wide]

  \item\label{item:pr:when.fn.calc.cont:cont} $f(\bullet)$ defined by \eqref{eq:cont.fc.ss} is continuous at $T$ for every function $f$ continuous in a neighborhood of $\spc(T)$.
    
  \item\label{item:pr:when.fn.calc.cont:sspec} $T$ has simple spectrum.
  
  \end{enumerate}
\end{proposition}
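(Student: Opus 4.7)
The plan is to prove each implication separately: $(b)\Rightarrow (a)$ is a standard Riesz-functional-calculus argument, while $(a)\Rightarrow (b)$ proceeds by contrapositive, reducing to the two-dimensional phenomenon already exhibited by Example~\ref{ex:fn.calc.disc}.

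For $(b)\Rightarrow (a)$, I would begin by noting that when $T$ has simple spectrum, some neighborhood $\cU \ni T$ in $M_n$ consists entirely of matrices whose eigenvalues are pairwise distinct (by continuity of the characteristic polynomial and of spectra~\cite[Theorem 3]{Newburgh}). For each $\mu \in \spc(T)$ I fix a small contour $\gamma_\mu$ enclosing $\mu$ and no other eigenvalue of any $S \in \cU$. The Riesz projection
\begin{equation*}
  E_\mu(S) := \frac{1}{2\pi i}\oint_{\gamma_\mu}(zI_n - S)^{-1}\, dz
\end{equation*}
coincides with the spectral idempotent of Definition~\ref{def:fn.calc.ss} attached to the unique eigenvalue $\mu(S)$ of $S$ inside $\gamma_\mu$, and both $E_\mu(\bullet)$ and $\mu(\bullet)$ depend continuously on $S \in \cU$. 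Hence for any $f$ continuous on a neighborhood of $\spc(T)$,
\begin{equation*}
  S \xmapsto{\quad} f(S) = \sum_{\mu \in \spc(T)} f(\mu(S))\, E_\mu(S)
\end{equation*}
is continuous at $T$, matching the construction \eqref{eq:cont.fc.ss} for $S$ near $T$.

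For the contrapositive of $(a)\Rightarrow (b)$, suppose $T$ has a repeated eigenvalue $\lambda$. Since $T$ is semisimple, algebraic and geometric multiplicities coincide, so I can select a two-dimensional subspace $W \le \ker(\lambda I_n - T)$ together with a $T$-invariant complement $W'$, yielding $T = (\lambda I_2) \oplus T'$ in a suitable basis with $T' \in \End(W')$ semisimple. I then perturb only within the $W$-block, in the spirit of Example~\ref{ex:fn.calc.disc}: for $\epsilon > 0$ set
\begin{equation*}
  T_\epsilon := A_\epsilon \oplus T'
  ,\quad
  A_\epsilon := \begin{pmatrix}\lambda & \sqrt{\epsilon}\\ 0 & \lambda + \epsilon\end{pmatrix},
\end{equation*}
each of which is semisimple and converges to $T$ as $\epsilon \to 0$. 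Picking $f(x) := |x-\lambda|^{1/2}$ (continuous on all of $\bC$), the $2\times 2$ computation in Example~\ref{ex:fn.calc.disc} gives off-diagonal entry $\sqrt{\epsilon}\cdot(\sqrt{\epsilon}-0)/\epsilon = 1$ for $f(A_\epsilon)$ at every $\epsilon > 0$, while the corresponding block of $f(T)$ is $f(\lambda) I_2 = 0_2$. Thus $f(T_\epsilon) \not\to f(T)$, disqualifying $T$ from~\ref{item:pr:when.fn.calc.cont:cont}.

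The main conceptual hurdle lies in the contrapositive: the perturbation must simultaneously remain semisimple (so that \eqref{eq:cont.fc.ss} applies) and preserve an obstructing off-diagonal entry in the limit. Both are handled uniformly by the choice above, so the proof reduces in essence to the $2\times 2$ pathology of Example~\ref{ex:fn.calc.disc}, in direct parallel with the Jordan-block reduction used in the proof of Proposition~\ref{pr:nbhd.eig.sel.m}.
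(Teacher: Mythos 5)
Your argument is correct in both directions. The contrapositive $(a)\Rightarrow(b)$ follows the paper's route exactly: split off a $2$-dimensional subspace $W\le\ker(\lambda I_n-T)$ with a $T$-invariant complement and invoke the $2\times 2$ pathology of Example~\ref{ex:fn.calc.disc}; you improve on the paper's ``one can easily arrange'' phrasing by writing down an explicit perturbation $A_\epsilon$ and function $f(x)=|x-\lambda|^{1/2}$, and the resulting computation (off-diagonal entry identically $1$, limit $0_2$) checks out, with $T_\epsilon=A_\epsilon\oplus T'$ semisimple and $f(T_\epsilon)=f(A_\epsilon)\oplus f(T')$ because the construction \eqref{eq:cont.fc.ss} respects $T_\epsilon$-invariant decompositions. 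The forward direction $(b)\Rightarrow(a)$ is where you diverge mildly: the paper tracks convergence of the one-dimensional eigenspaces $\ker(T_k-\lambda_k I_n)$ directly (via the continuity result \cite[Proposition 13.4]{salt_divalg}) and of their ``complementary'' kernels, deducing $E_{\lambda_k}\to E_\lambda$, whereas you express $E_\mu(S)$ as a Riesz contour integral and read off its joint continuity in $S$. Both are standard and essentially equivalent; the Riesz route is arguably self-contained and avoids the external citation, at the cost of assuming the reader is comfortable with the holomorphic functional calculus rather than with the elementary kernel-continuity lemma. One small point worth stating explicitly (implicit in your setup) is the existence of a $T$-invariant complement $W'$ to the chosen $W$: it is the sum of the remaining eigenspaces together with a complement of $W$ inside $\ker(\lambda I_n-T)$, using semisimplicity of $T$.
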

\begin{proof}
  \begin{enumerate}[label={},wide]
  \item \textbf{\ref{item:pr:when.fn.calc.cont:cont} $\Rightarrow$ \ref{item:pr:when.fn.calc.cont:sspec}.} If $T$ has at least one eigenvalue $\lambda$ of multiplicity $\ge 2$, we can restrict attention to a 2-dimensional subspace $W\le \ker(\lambda I_n-T)$ by working throughout with matrices leaving invariant a decomposition $\bC^n=W\oplus W'$. Example~\ref{ex:fn.calc.disc} then applies to produce (even globally) continuous functions $f$ with \eqref{eq:cont.fc.ss} discontinuous at $T|_W=\lambda \id_W$.    
    
  \item \textbf{\ref{item:pr:when.fn.calc.cont:sspec} $\Rightarrow$ \ref{item:pr:when.fn.calc.cont:cont}.} The simplicity of the spectrum means (by spectrum continuity \cite[Theorem 3]{Newburgh}) that for any convergent sequence $T_k\xrightarrow[k]{}T$ and any fixed eigenvalue $\lambda\in \spc(T)$ we have
    \begin{equation*}
      \spc(T_k)
      \ni
      \lambda_k
      \left(\text{simple for large $k$}\right)
      \xrightarrow[\quad k\quad]{\quad}
      \lambda.
    \end{equation*}
   
      The one-dimensional kernels
      \begin{equation*}
        \ker\left(T_k-\lambda_k I_n\right)
        =
        \im E_{\lambda_k}
      \end{equation*}
      converge (in $\bG$) to the $\lambda$-eigenspace $\im E_{\lambda}$ of $T$ by yet another application of \cite[Proposition 13.4]{salt_divalg}. This goes for \emph{all} (simple!) $\lambda\in \spc(T)$, so we also have
      \begin{equation*}
         \ker E_{\lambda_k} =
        \bigoplus_{\lambda\ne \lambda'\in \spc(T)} \im E_{\lambda'_k}
       \xrightarrow[\quad k\quad]{\quad\text{convergence in $\bG$}\quad}
        \bigoplus_{\lambda\ne \lambda'\in \spc(T)} \im E_{\lambda'}
        =
        \ker E_{\lambda}.
      \end{equation*}
      This yields  
      \begin{equation}\label{eq:proj.conv}
        \forall \lambda\in\spc(T)
        \quad:\quad
        E_{\lambda_k}\xrightarrow[\quad k\quad]{\quad}
        E_\lambda,
      \end{equation}
    so
    \begin{equation*}
      f(T_k)=\sum_{\lambda_k\in \spc(T_k)}f(\lambda_k) E_{\lambda_k}
      \xrightarrow[\quad k\quad]{\quad}
      \sum_{\lambda\in \spc(T)}f(\lambda) E_{\lambda}
      =
      f(T)
    \end{equation*}
    by \eqref{eq:proj.conv} and the continuity of $f$.  \qedhere
  \end{enumerate}  
\end{proof}

The following result will help eliminate the extraneous factor in Proposition~\ref{pr:still.theta}. 

\begin{proposition}\label{pr:no.theta}
  Let $\ca{X}_n\in \{\GL(n)_{ss}, \SL(n)_{ss}\}$. The assignment $\Theta$ of \eqref{eq:sns} is a well-defined involutory spectrum- and commutativity-preserving map on $\cX_n$ for all $n\ge 1$, discontinuous for $n\ge 3$.
\end{proposition}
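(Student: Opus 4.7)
The plan is to dispatch well-definedness, involutivity, and the two preservation properties by canonically reformulating $\Theta$, and then to construct an explicit counterexample witnessing discontinuity for $n=3$ (whence for $n\ge 3$ by direct sum with an identity block).

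For well-definedness I propose the canonical formula
\begin{equation*}
  \Theta(T)=\sum_{\lambda\in\spc(T)}\lambda F_\lambda,
\end{equation*}
where $F_\lambda$ is the spectral projection of $T^*$ onto $\ker(\bar\lambda I-T^*)$ in the sense of Definition~\ref{def:fn.calc.ss}; this manifestly depends only on $T$. To identify it with $S^{-1}NS$ for an arbitrary decomposition $T=SNS^{-1}$ with $S\in\GL(n)_{\ge 0}$ and $N$ normal, observe that $T^*=S^{-1}N^*S$, and use the Putnam--Fuglede-flavored identity $\ker(\lambda I-N)=\ker(\bar\lambda I-N^*)$ (valid because $N$ is normal) to deduce $F_\lambda=S^{-1}E_\lambda^N S$, where $E_\lambda^N$ is the orthogonal projection onto $\ker(\lambda I-N)$; summing yields $\sum\lambda F_\lambda=S^{-1}NS$, as desired. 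Involutivity follows because $\Theta(T)=S^{-1}NS=(S^{-1})N(S^{-1})^{-1}$ is itself a decomposition with positive square $S^{-1}$ and the same normal $N$. Spectrum preservation is immediate from similarity. For commutativity preservation I use simultaneous diagonalizability of commuting semisimples to write $T_i=RD_iR^{-1}$ ($R\in\GL(n)$, $D_i$ commuting and diagonal), polar-decompose $R=SU$ with $S>0$ and $U$ unitary, and observe $T_i=S(UD_iU^{-1})S^{-1}$ exhibits the $T_i$ with a \emph{common} positive $S$ and mutually commuting normal parts; hence $\Theta(T_i)=S^{-1}(UD_iU^{-1})S$ also commute.

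The main obstacle is discontinuity. The plan hinges on the identity $\Theta(T)=\overline{(\cdot)}(T^*)$, valid on the simple-spectrum locus: the canonical formula above amounts to applying the extended calculus of Definition~\ref{def:fn.calc.ss} with the (continuous but non-holomorphic) conjugation function to $T^*$. By Proposition~\ref{pr:when.fn.calc.cont.op}, that calculus is necessarily discontinuous at some semisimple matrix with non-simple spectrum. For $n=3$ I propose the explicit family
\begin{equation*}
  T_\epsilon:=\begin{pmatrix}1&\sqrt\epsilon&0\\0&1+\epsilon&\sqrt\epsilon\\0&0&1+i\epsilon\end{pmatrix}\in\GL(3)_{ss},\qquad \epsilon>0\text{ small},
\end{equation*}
which has three distinct eigenvalues and tends to $I_3\in\cX_3$ as $\epsilon\to 0^+$. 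A direct computation of the eigenvectors of $T_\epsilon^*$ (or, equivalently, Sylvester's divided-difference formula for $z\mapsto\bar z$ at the coalescing points $\{1,\,1+\epsilon,\,1-i\epsilon\}$, where the second-order divided difference grows like $1/\epsilon$ and is scaled by the cofactor $(\sqrt\epsilon)^2=\epsilon$) gives $\Theta(T_\epsilon)_{3,1}=1-i$ for every small $\epsilon$, whereas $\Theta(I_3)_{3,1}=0$; hence $\Theta$ cannot be continuous at $I_3$. The $\SL(n)_{ss}$-variant is recovered by rescaling $T_\epsilon$ by $(\det T_\epsilon)^{-1/n}$, and the $n>3$ case by direct-summing with $I_{n-3}$. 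The mechanism---second-order divided differences of $\bar{(\cdot)}$ at coalescing eigenvalues---is structurally unavailable in dimension~$2$, consistent with $\Theta$ acting there as the (continuous) transposition on real $2\times 2$ matrices.
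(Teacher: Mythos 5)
Your proof is correct and takes a genuinely different route from the paper's on all three substantive points. For \emph{well-definedness}, the paper invokes the Putnam--Fuglede theorem directly: from $SNS^{-1}=TMT^{-1}$ it deduces $SN^*S^{-1}=TM^*T^{-1}$ and then takes adjoints. You instead build a canonical formula $\sum_{\lambda}\lambda F_\lambda$ purely out of $T$ itself (the spectral idempotents of $T^*$) and match it to $S^{-1}NS$; the only normality input you need is the elementary identity $\ker(\lambda I-N)=\ker(\bar\lambda I-N^*)$, not Putnam--Fuglede proper, so your route is slightly more self-contained. For \emph{commutativity preservation}, the paper adapts the polynomial-interpolation proof of Putnam--Fuglede (apply $p$ with $p(\lambda)=\bar\lambda$ on the joint spectrum and take adjoints); you instead use simultaneous diagonalization of commuting semisimples and a \emph{single} polar decomposition to exhibit both operators with a common positive $S$ and commuting normal cores, which is arguably cleaner. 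For \emph{discontinuity}, the paper cites Niemiec's Theorem 1.2(A) (continuity of the extended calculus forces holomorphy of $f$, failing for conjugation); your explicit family $T_\epsilon$ with the second-order divided difference $\frac{1-i}{\epsilon}$ of $\bar z$ canceling the $\epsilon$ coming from the $(\sqrt\epsilon)^2$ cofactor is correct and elementary, and the reductions to $\SL(n)_{ss}$ (via the conjugate-linearity of $\Theta$ under scalars) and to $n>3$ (via direct sum with $I_{n-3}$) are sound.

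One small imprecision worth flagging: your invocation of Proposition~\ref{pr:when.fn.calc.cont.op} asserts only that \emph{some} continuous $f$ makes the calculus discontinuous at a non-simple-spectrum point, not that this happens for $f=\overline{(\cdot)}$ specifically; it is motivation, not a proof step. You do realize this implicitly, since you then supply the explicit family, but the sentence should not be read as carrying logical weight. Similarly, the closing remark that $\Theta$ acts ``as transposition on real $2\times 2$ matrices'' is only true on the real upper-triangular matrices and is not claimed by the proposition anyway, which says nothing about $n<3$; it can be dropped without loss.
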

\begin{proof}
  First observe that if well-defined, $\Theta$ will be defined globally on $\ca{X}_n$: the semisimple matrices are precisely those conjugate to diagonal ones by some $S\in \GL(n)$, and the \emph{polar decomposition} \cite[Theorem 7.3.1]{hj_mtrx}
  \begin{equation*}
    S=|S^*|V
    ,\quad
    V\in \U(n)
    ,\quad
    |S^*|=(SS^*)^{\frac{1}{2}}\in \GL(n)_{\ge 0}
  \end{equation*}
  renders such a matrix an $|S^*|$-conjugate of a normal one.

  As to $\Theta$ being well-defined: for $S,T\in \GL(n)_{\ge 0}$ and $M,N\in N_n\cap \ca{X}_n$ we have 
  \begin{equation*}
    \begin{aligned}
      SNS^{-1} = TMT^{-1}
      &\xRightarrow{\quad}
        SN^*S^{-1} = TM^*T^{-1}
        \quad\left(\text{\emph{Putnam-Fuglede theorem} \cite[Problem 192]{hal_hspb_2e_1982}}\right)\\
      &\xRightarrow{\quad\text{taking conjugates}\quad}
        S^{-1}NS=T^{-1}MT.
    \end{aligned}
  \end{equation*}
  Clearly, the assignment $\Theta$ is spectrum-preserving. Furthermore, one of the proofs of Putnam-Fuglede (e.g.\ \cite[\S 41, Theorem 2]{halm_sm_2e_1957}) can also be adapted to prove that $\Theta$ is also commutativity-preserving: if $SNS^{-1}$ and $TMT^{-1}$ commute then so do $p(SNS^{-1})$ and $p(TMT^{-1})$ for polynomials $p$; apply this to a polynomial
  \begin{equation*}
    p\in \bC[x]
    \quad\text{with}\quad
    p(\lambda)=\overline{\lambda}
    ,\quad\forall \lambda\in \spc(M)\cup \spc(N),
  \end{equation*}
  whence $SN^*S^{-1}=p(SNS^{-1})$ and $TM^*T^{-1}=p(TMT^{-1})$ commute. Taking adjoints then proves that $S^{-1}NS$ and $T^{-1}MT$ do as well. 

  To assess continuity, observe that for normal $N$ and $S\in \GL(n)_{\ge 0}$ we have
  \begin{equation}\label{eq:adsfads}
    \Theta\left(\Ad_S N\right)
    =
    \Ad_{S^{-1}}N
    =
    \left(\Ad_S N^*\right)^*
    =
    \left(\Ad_S f(N)\right)^*
    =
    f\left(\Ad_S N\right)^*
  \end{equation}
  with $f(\bullet)$ as in \eqref{eq:cont.fc.ss} for $f:=\overline{\bullet}$ (complex conjugation). The continuity of $\Theta$ thus reduces to that of $f(\bullet)$:
    \begin{equation*}
      \text{$\Theta$ is continuous at }\Ad_SN
      \xLeftrightarrow[\quad\quad]{\quad\text{\eqref{eq:adsfads}}\quad}
      \text{$f(\bullet)$ is continuous at }\Ad_SN.
    \end{equation*}
  The latter condition fails for $n\ge 3$ by the aforementioned \cite[Theorem 1.2(A)]{zbMATH06285212}:
  \begin{itemize}[wide]
  \item The result applies directly to $\GL(n)_{ss}$, where continuity fails because complex conjugation is not holomorphic on the punctured complex plane $\bC^{\times}$.

  \item While for $\SL(n)$ the same conclusion follows from the $\GL(n)$ counterpart and the conjugate linearity of \eqref{eq:cont.fc.ss} for $f=\overline{\bullet}$:
    \begin{equation*}
      f(\alpha T) = \overline{\alpha}f(T)
      ,\quad
      \forall T\in \GL(n)_{ss}
      ,\quad
      \forall \alpha\in \bC.
    \end{equation*}
  \end{itemize}
\end{proof}

\begin{proof}[Proof of Theorem~\ref{th:ss}]
  That the maps \eqref{eq:inner} meet the requirements of course needs no proof, while Propositions \ref{pr:still.theta} and \ref{pr:no.theta} give the converse: the latter's discontinuity claim eliminates the $\Theta$ factor in \eqref{eq:comp3maps}.
\end{proof}

\def\polhk#1{\setbox0=\hbox{#1}{\ooalign{\hidewidth
  \lower1.5ex\hbox{`}\hidewidth\crcr\unhbox0}}}


\begin{thebibliography}{10}

\bibitem{Aupetit}
Bernard Aupetit.
\newblock Spectrum-preserving linear mappings between {B}anach algebras or
  {J}ordan-{B}anach algebras.
\newblock {\em J. London Math. Soc. (2)}, 62(3):917--924, 2000.

\bibitem{blk}
B.~Blackadar.
\newblock {\em Operator algebras}, volume 122 of {\em Encyclopaedia of
  Mathematical Sciences}.
\newblock Springer-Verlag, Berlin, 2006.
\newblock Theory of $C^*$-algebras and von Neumann algebras, Operator Algebras
  and Non-commutative Geometry, III.

\bibitem{BresarSemrl}
Matej Bre\v{s}ar and Peter \u{S}emrl.
\newblock An extension of the {G}leason-{K}ahane-\.zelazko theorem: a possible
  approach to {K}aplansky's problem.
\newblock {\em Expo. Math.}, 26(3):269--277, 2008.

\bibitem{btd_lie_1985}
Theodor Br{\"o}cker and Tammo tom Dieck.
\newblock {\em Representations of compact {Lie} groups}, volume~98 of {\em
  Grad. Texts Math.}
\newblock Springer, Cham, 1985.

\bibitem{bronson_matr}
Richard Bronson.
\newblock {\em Matrix methods. {An} introduction}.
\newblock Boston, MA: Academic Press, 2nd edition, 1991.

\bibitem{bbi}
Dmitri Burago, Yuri Burago, and Sergei Ivanov.
\newblock {\em A course in metric geometry}, volume~33 of {\em Graduate Studies
  in Mathematics}.
\newblock American Mathematical Society, Providence, RI, 2001.

\bibitem{zbMATH06468787}
Daryl Cooper and Jason~Fox Manning.
\newblock Non-faithful representations of surface groups into
  {{\(\mathrm{SL}(2,\mathbb C)\)}} which kill no simple closed curve.
\newblock {\em Geom. Dedicata}, 177:165--187, 2015.

\bibitem{zbMATH05719630}
Cl{\'e}ment de~Seguins~Pazzis.
\newblock The singular linear preservers of non-singular matrices.
\newblock {\em Linear Algebra Appl.}, 433(2):483--490, 2010.

\bibitem{zbMATH01747827}
Claude-Alain Faure.
\newblock An elementary proof of the fundamental theorem of projective
  geometry.
\newblock {\em Geom. Dedicata}, 90:145--151, 2002.

\bibitem{zbMATH05785888}
Robert Ghrist.
\newblock Configuration spaces, braids, and robotics.
\newblock In {\em Braids. Introductory lectures on braids, configurations and
  their applications. Based on the program ``Braids'', IMS, Singapore, May
  14--July 13, 2007.}, pages 263--304. Hackensack, NJ: World Scientific, 2010.

\bibitem{GogicPetekTomasevic}
Ilja Gogi\'c, Tatjana Petek, and Mateo Toma\v{s}evi\'c.
\newblock Characterizing {J}ordan embeddings between block upper-triangular
  subalgebras via preserving properties.
\newblock {\em Linear Algebra Appl.}, 704:192--217, 2025.

\bibitem{GOGIC2025129497}
Ilja Gogić and Mateo Tomašević.
\newblock An extension of {P}etek-\v{S}emrl preserver theorems for {J}ordan
  embeddings of structural matrix algebras.
\newblock {\em Journal of Mathematical Analysis and Applications},
  549(1):129497, 2025.

\bibitem{halm_sm_2e_1957}
P.~R. Halmos.
\newblock Introduction to {Hilbert} space and the theory of spectral
  multiplicity. 2nd ed.
\newblock New {York}: {Chelsea} {Publishing} {Company} 120 p. (1957)., 1957.

\bibitem{hal_hspb_2e_1982}
Paul~R. Halmos.
\newblock {\em A {Hilbert} space problem book. 2nd ed., rev. and enl},
  volume~19 of {\em Grad. Texts Math.}
\newblock Springer, Cham, 1982.

\bibitem{Herstein}
I.~N. Herstein.
\newblock Jordan homomorphisms.
\newblock {\em Trans. Amer. Math. Soc.}, 81:331--341, 1956.

\bibitem{hj_mtrx}
Roger~A. Horn and Charles~R. Johnson.
\newblock {\em Matrix analysis.}
\newblock Cambridge: Cambridge University Press, 2nd ed. edition, 2013.

\bibitem{jac_jord}
Nathan Jacobson.
\newblock {\em Structure and representations of {Jordan} algebras}, volume~39
  of {\em Colloq. Publ., Am. Math. Soc.}
\newblock American Mathematical Society (AMS), Providence, RI, 1968.

\bibitem{MR832991}
Ali~A. Jafarian and A.~R. Sourour.
\newblock Spectrum-preserving linear maps.
\newblock {\em J. Funct. Anal.}, 66(2):255--261, 1986.

\bibitem{Kaplansky}
Irving Kaplansky.
\newblock {\em Algebraic and analytic aspects of operator algebras}, volume No.
  1 of {\em Conference Board of the Mathematical Sciences Regional Conference
  Series in Mathematics}.
\newblock American Mathematical Society, Providence, RI, 1970.

\bibitem{lee_top-mfld_2e_2011}
John~M. Lee.
\newblock {\em Introduction to topological manifolds}, volume 202 of {\em Grad.
  Texts Math.}
\newblock New York, NY: Springer, 2nd ed. edition, 2011.

\bibitem{lee_smth-mfld_2e_2013}
John~M. Lee.
\newblock {\em Introduction to smooth manifolds}, volume 218 of {\em Grad.
  Texts Math.}
\newblock New York, NY: Springer, 2nd revised ed edition, 2013.

\bibitem{LiTsaiWangWong}
Chi-Kwong Li, Ming-Cheng Tsai, Ya-Shu Wang, and Ngai-Ching Wong.
\newblock Linear maps preserving matrices annihilated by a fixed polynomial.
\newblock {\em Linear Algebra Appl.}, 674:46--67, 2023.

\bibitem{LiZhang}
Chi-Kwong Li and Fuzhen Zhang.
\newblock Eigenvalue continuity and {G}er\v{s}gorin's theorem.
\newblock {\em Electron. J. Linear Algebra}, 35:619--625, 2019.

\bibitem{MR210096}
H.~R. Morton.
\newblock Symmetric products of the circle.
\newblock {\em Proc. Cambridge Philos. Soc.}, 63:349--352, 1967.

\bibitem{Newburgh}
J.~D. Newburgh.
\newblock The variation of spectra.
\newblock {\em Duke Math. J.}, 18:165--176, 1951.

\bibitem{zbMATH06285212}
Piotr Niemiec.
\newblock Functional calculus for diagonalizable matrices.
\newblock {\em Linear Multilinear Algebra}, 62(3):297--321, 2014.

\bibitem{pank_wign}
Mark Pankov.
\newblock {\em Wigner-type theorems for {Hilbert} {Grassmannians}}, volume 460
  of {\em Lond. Math. Soc. Lect. Note Ser.}
\newblock Cambridge: Cambridge University Press, 2020.

\bibitem{Petek-HM}
Tatjana Petek.
\newblock Mappings preserving spectrum and commutativity on {H}ermitian
  matrices.
\newblock {\em Linear Algebra Appl.}, 290(1-3):167--191, 1999.

\bibitem{Petek-TM}
Tatjana Petek.
\newblock Spectrum and commutativity preserving mappings on triangular
  matrices.
\newblock {\em Linear Algebra Appl.}, 357:107--122, 2002.

\bibitem{PetekSemrl}
Tatjana Petek and Peter \u{S}emrl.
\newblock Characterization of {J}ordan homomorphisms on {$M_n$} using
  preserving properties.
\newblock {\em Linear Algebra Appl.}, 269:33--46, 1998.

\bibitem{MR2736150}
Leiba Rodman and Peter \u{S}emrl.
\newblock A localization technique for linear preserver problems.
\newblock {\em Linear Algebra Appl.}, 433(11-12):2257--2268, 2010.

\bibitem{zbMATH03133061}
M.~Rosenblum.
\newblock On a theorem of {Fuglede} and {Putnam}.
\newblock {\em J. Lond. Math. Soc.}, 33:376--377, 1958.

\bibitem{salt_divalg}
David~J. Saltman.
\newblock {\em Lectures on division algebras}, volume~94 of {\em Reg. Conf.
  Ser. Math.}
\newblock Providence, RI: American Mathematical Society, 1999.

\bibitem{MR1311919}
A.~R. Sourour.
\newblock Invertibility preserving linear maps on {$\mathcal{L}(X)$}.
\newblock {\em Trans. Amer. Math. Soc.}, 348(1):13--30, 1996.

\bibitem{MR1866032}
Peter \u{S}emrl.
\newblock Invertibility preserving linear maps and algebraic reflexivity of
  elementary operators of length one.
\newblock {\em Proc. Amer. Math. Soc.}, 130(3):769--772, 2002.

\bibitem{Semrl2}
Peter \u{S}emrl.
\newblock Maps on matrix spaces.
\newblock {\em Linear Algebra Appl.}, 413(2-3):364--393, 2006.

\bibitem{Semrl}
Peter \u{S}emrl.
\newblock Characterizing {J}ordan automorphisms of matrix algebras through
  preserving properties.
\newblock {\em Oper. Matrices}, 2(1):125--136, 2008.

\end{thebibliography}
\end{document}